\newtheorem{Definition}{Definition}[section]
\newtheorem{Theorem}[Definition]{Theorem}
\newtheorem*{Notation}{Notation innovation}
\newtheorem{Corollary}[Definition]{Corollary}
\newtheorem{Proposition}[Definition]{Proposition}
\newtheorem{Lemma}[Definition]{Lemma}
\newtheorem{Remark}[Definition]{Remark}
\numberwithin{equation}{section}
\title[Rapid mixing for torus extensions of hyperbolic flows]{Rapid mixing and superpolynomial equidistribution for torus extensions of hyperbolic flows}
\date{\today}
\subjclass[2020]{Primary: 37A25, 37C30; Secondary: 37D20.}
\keywords{Rate of mixing, Hyperbolic flow, Torus extension, Frame flow}
\author[DAOFEI ZHANG]{Daofei Zhang}
\address{School of Mathematics and Computing Science, Guilin University of Electronic Technology, Guilin, 541004, China}
\email{Daofei.Zhang@guet.edu.cn}
\begin{document}

\begin{abstract}
In this paper, we study mixing rates for $\mathbb{T}^{d}$-extensions of hyperbolic flows. Given three closed orbits with their holonomies, we can relate them to a point in $\mathbb{R}^{d+1}$. We prove that the extension flow enjoys rapid mixing, if the associated point is an inhomogeneous Diophantine number. Under the same assumption, we also obtain the superpolynomial equidistribution, namely, a superpolynomial error term in the equidistribution of the holonomy around closed orbits. Lastly, we apply these results to a class of three-dimensional frame flows.
\end{abstract}

\maketitle

\section{Introduction}\label{sec 1}

\subsection{Statement of main results}\label{subsec 1.1}

Estimating the mixing rates of smooth flows with some hyperbolicity is a highly challenging problem. In this paper, we focus on a class of the simplest partially hyperbolic flows: torus extensions of hyperbolic flows. We begin by recalling a seminal result of Dolgopyat from \cite{Dol98b}. Let \( M \) be a compact smooth Riemannian manifold, and let \( g_t: \Lambda \to \Lambda \subset M \) be a \( C^\infty \) hyperbolic flow where $\Lambda$ is a hyperbolic basic set. Let \( \mu_\Phi \) denote the Gibbs measure associated with a Hölder continuous potential \( \Phi \) on \( \Lambda \). A Diophantine number \( \alpha \in \mathbb{R} \) means that \( |q\alpha - p| \geq C|q|^{-\gamma} \) for some constants \( C > 0 \) and \( \gamma > 0 \), for any \( p \in \mathbb{Z} \) and any nonzero \( q \in \mathbb{Z} \).

\begin{Theorem}[Dolgopyat \cite{Dol98b}]\label{Theorem 1.1}
If there exist two closed orbits $\tau_{1}, \tau_{2}$ with periods $\ell_{1}, \ell_{2}$ such that the associated number $\alpha:=\frac{\ell_{1}}{\ell_{2}}$ is a Diophantine number, then $g_{t}$ is rapidly mixing with respect to $\mu_{\Phi}$. Specifically, the quantity
$$\int E\circ g_{t}\cdot Fd\mu_{\Phi}-\int Ed\mu_{\Phi}\int Fd\mu_{\Phi}$$
decays to zero faster than any polynomial rate as $t\to\infty$ for any smooth functions $E, F$ on $M$.
\end{Theorem}

In this paper, we extend the above result to torus extensions of hyperbolic flows, focusing particularly on a class of three-dimensional frame flows. To formulate our results more precisely, we now introduce more notations. Let $g_{t}:\Lambda\to\Lambda\subset M$ and $\mu_{\Phi}$ be introduced above. Fix an integer $d\ge1$. Consider $\widehat{M}$ as a smooth principle $\mathbb{T}^{d}$-bundle over $M$ with the bundle projection $\varrho:\widehat{M}\to M$, and denote $\widehat{\Lambda}:=\varrho^{-1}(\Lambda)$. Define $f_{t}:\widehat{\Lambda}\to\widehat{\Lambda}$ as a $C^{\infty}$ $\mathbb{T}^{d}$-extension of $g_{t}$. The extension flow $f_{t}$ preserves the local product measure $\widehat{\mu}_{\Phi}$ of $\mu_{\Phi}$ and the Lebesgue measure on $\mathbb{T}^{d}$.

\begin{Notation}
Throughout this paper, we denote $\mathbb{T}^{d}=\mathbb{R}^{d}/2\pi\mathbb{Z}^{d}$. If $x,y\in\mathbb{R}^{d}$, we use $xy$ to denote their inner product, i.e., $xy:=\sum_{i=1}^{d}x_{i}y_{i}$, and we use $|x|$ to denote its $1$-norm, i.e., $|x|:=\sum_{i=1}^{d}|x_{i}|$. 
\end{Notation}

For a closed orbit $\tau$ of $g_{t}$, its holonomy class induced by $f_{t}$, being a single point set which we assume belongs to $[0,2\pi)^{d}$ and denoted by $\theta_{\tau}$. Given three closed orbits $\tau_{1}, \tau_{2}, \tau_{3}$ with periods $\ell_{1}, \ell_{2}, \ell_{3}$ and holonomies $\theta_{1},\theta_{2},\theta_{3}\in[0,2\pi)^{d}$, we can  associate them a number $(\alpha,\beta)\in\mathbb{R}^{d+1}$ with $$\alpha:=\frac{\ell_{1}-\ell_{2}}{\ell_{2}-\ell_{3}}\in\mathbb{R}\quad\text{and}\quad\beta:=\frac{1}{2\pi}(\theta_{1}-\theta_{2})-\alpha\frac{1}{2\pi}(\theta_{2}-\theta_{3})\in\mathbb{R}^{d}.$$
We recall that an  inhomogeneous Diophantine number $(\alpha,\beta)\in\mathbb{R}^{d+1}$ satisfies the inequality: $|q\alpha+m\beta-p|\ge C(|q|+|m|)^{-\gamma}$ for some constants $C>0$ and $\gamma>0$, and for all integers $p\in\mathbb{Z}$ and $(0,0)\not=(q,m)\in\mathbb{Z}^{d+1}$. The main result of this paper is as follows.

\begin{Theorem}\label{Theorem 1.2}
If there exist three closed orbits of $g_{t}$ such that the associated point $(\alpha,\beta)\in\mathbb{R}^{d+1}$ is an inhomogeneous Diophantine number, then $f_{t}$ is rapidly mixing with respect to $\widehat{\mu}_{\Phi}$, that is for any $n\in\mathbb{N}^{+}$ there exist $C>0$ and $k\in\mathbb{N}^{+}$ such that
for any $E, F\in C^{k}(\widehat{M})$ and any $t>0$, we have 
$$\bigg|\int E\circ f_t \cdot F d\widehat \mu_{\Phi}  - \int E d\widehat \mu_{\Phi} \int Fd\widehat \mu_{\Phi}\bigg|\le C||E||_{C^{k}}||F||_{C^{k}}t^{-n}.$$
\end{Theorem}

It is worth mentioning that, in contrast to \cite{Dol98b} which uses two closed orbits, our approach employs three closed orbits. The reason and advantage for this choice lie in the fact that using three closed orbits simplifies the proof and helps avoid potential complications, while still achieving the same desired effect. That said, the use of two closed orbits would not significantly broaden the applicability of the results, since assumptions of this type—whether involving two or three orbits—remain generic rather than open and dense.

Under the same assumption as in Theorem \ref{Theorem 1.2}, we also obtain the superpolynomial equidistribution, specifically, a superpolynomial error term in the equidistribution of holonomies around closed orbits. For any \(T > 0\), let \(\pi(T)\) be the collection of prime closed orbits \(\tau\) with \(\ell_{\tau} \le T\).

\begin{Theorem}\label{Theorem 1.3}
If there exist three closed orbits of $g_{t}$ such that the associated point $(\alpha,\beta)\in\mathbb{R}^{d+1}$ is an inhomogeneous Diophantine number, then for any $n\in\mathbb{N}^{+}$  there exist $C>0$ and $k\in\mathbb{N}^{+}$ such that for any $F\in C^{k}(\mathbb{T}^{d})$ and any $T>1$,
$$
\bigg|\dfrac{1}{\#\pi(T)}\sum_{\ell_\tau \leq T}F(\theta_{\tau})-\int_{\mathbb{T}^{d}}F(\theta)d\theta\bigg|\le C||F||_{C^{k}}\frac{1}{{T^{n}}}.
$$
\end{Theorem}

The assumption in Theorems \ref{Theorem 1.2} and \ref{Theorem 1.3} is quite prevalent, since the set of inhomogeneous Diophantine numbers is generic and has full Lebesgue measure \cite{Bug05}. We can apply Theorems \ref{Theorem 1.2} and \ref{Theorem 1.3} to a class of three-dimensional frame flows. For further details on this application, please refer to Subsection~\ref{subsec 1.2}.

We conclude this subsection by relating and comparing our results to prior works. 

\begin{enumerate}[$(1)$]

	\item For compact group extensions of hyperbolic diffeomorphisms, Dolgopyat demonstrated that rapid mixing can be achieved under a suitable Diophantine condition on Brin groups~\cite{Dol02}. Additionally, for torus extensions of hyperbolic diffeomorphisms, a Diophantine condition on periodic orbits can also be used to establish rapid mixing \cite{Mel18b} by Melbourne and Terhesiu.

	\item Turning to the case of flows, for skew products of hyperbolic flows,  Field-Melbourne-Nicol-Török in \cite{Fie05} proved rapid mixing for the special case of smooth \(G\)-equivariant test functions associated with a fixed unitary representation. Our Theorem \ref{Theorem 1.2} extends this result by applying to all smooth test functions. The principal distinction between these proofs lies in the fact that the Dolgopyat type estimate in \cite{Fie05} is not uniform with respect to unitary representations, whereas our approach yields a uniform estimate for the transfer operators, as established in Proposition \ref{Dolgopyat type estimate}. Moreover, we are able to derive a quantitative expression of this Dolgopyat type estimate in terms of unitary representations. Combined with Fourier analytic techniques, this enables us to extend their result from equivariant test functions to all smooth test functions.
	
	%In \cite{Pol08}, the authors examined the equidistribution error term for \(\mathbb{T}^{1}\)-extensions of hyperbolic flows, showing that each non-trivial character satisfies the equidistribution theorem with a polynomial error term. Our Theorem \ref{Theorem 1.3} generalizes this result by achieving a stronger, superpolynomial error term for any smooth function.
	
	\item Recently, Pollicott and the author in \cite{Pol24} proved that a large class of compact group extensions of hyperbolic flows exhibit rapid mixing. Their approach is rooted in symbolic dynamics and relies on a Diophantine assumption of Brin groups. Concurrently, Cekić and Lefeuvre in \cite{Cek24} also established rapid mixing for compact group extensions of volume-preserving Anosov flows under an assumption on Brin groups. Their proof employs a distinct method, leveraging semiclassical analysis. 
	
	A key distinction between our work and \cite{Pol24} lies in the proof of the Dolgopyat type estimate (Proposition \ref{Dolgopyat type estimate}). The proof in \cite{Pol24} relies on the use of u-s paths within stable and unstable manifolds, whereas our proof is based on the use of closed orbits. This leads to substantial differences in the derivation of the Dolgopyat type estimates. Notably, the Diophantine assumptions on Brin groups (or u-s paths) in \cite{Pol24} and those on closed orbits in our paper are independent. For instance, a hyperbolic flow may possess no nontrivial u-s paths, leading to a trivial Brin group that fails to satisfy the assumption in \cite{Pol24}, while the closed orbit assumption in our paper may still be satisfied by adjusting data from finitely many points. Conversely, the Diophantine assumption in our paper may fail even when the assumption in \cite{Pol24} holds. As an example, consider an Anosov flow arising as a suspension of an Anosov diffeomorphism. If the roof function is not a coboundary, the assumption in \cite{Pol24} holds generically for extensions of the flow. However, the roof function exhibits considerable flexibility, which can cause the value $\alpha:=\frac{\ell_{1}-\ell_{2}}{\ell_{2}-\ell_{3}}$ is not a Diophantine number for any three closed orbits. As a result, the inhomogeneous Diophantine assumption never holds for any extension of such a flow. 
\end{enumerate}

\subsection{Examples}\label{subsec 1.2}

Our findings can be applied to certain frame flows on three-dimensional compact manifolds of negative curvature, as they represent $\mathbb{T}^{1}$-extensions of geodesic flows. As a statistical property, the mixing rate of frame flows not only has its own importance but can also be used to solve problems in other fields, such as the surface subgroup conjecture of Waldhausen proved in \cite{Kah12} by Kahn and Markovic. Consider $V$ as a smooth three-dimensional oriented manifold of negative curvature, and let $M=T^{1}V$ denote the unit tangent bundle over $V$. Define $g_{t}$ as the geodesic flow and $f_{t}$ as the frame flow. We particularly focus on cases where $V$ is a convex cocompact hyperbolic three-manifold derived from a Schottky group. In this scenario, the geodesic flow $g_{t}$ is an Axiom A flow, meaning it acts on its non-wandering set $\Omega$ as a hyperbolic flow, and the frame flow $f_{t}$ serves as a smooth $\mathbb{T}^{1}$-extension of $g_{t}$ \cite{Pol91}.

Let $N\ge3$ be an integer. Choose $N$ matrices $\{T_{i}\}_{i=1}^{N}\subset \text{PSL}(2,\mathbb{C})$, and denote $T_{i}=[a_{i},b_{i};c_{i},d_{i}]$, where $a_{i}d_{i}-b_{i}c_{i}=1$. Each $T_{i}$ can be regarded as a Möbius transformation $T_{i}:\widehat{\mathbb{C}}\to\widehat{\mathbb{C}}$, given by $T_{i}(z)=\frac{a_{i}z+b_{i}}{c_{i}z+d_{i}}$, where $\widehat{\mathbb{C}}:=\mathbb{C}\cup\{\infty\}$. We may assume that $c_{i}\neq0$. 

\begin{Definition}\label{Definition of Schottky group}
	The group $\Gamma$ generated by $\{T_{i}\}_{i=1}^{N}$ is called a classical Schottky group if there exist $2N$ Euclidean open discs  $\{D_{i}\}_{i=1}^{2N}$ in $\mathbb{C}$ with pairwise disjoint closures such that $T_{i}(D_{i})=\widehat{\mathbb{C}}\setminus\overline{D_{i+N}}$ for each $1\le i\le N$.
\end{Definition}

Given $\{T_{i}\}_{i=1}^{N}\subset \text{PSL}(2,\mathbb{C})$, where $T_{i}=[a_{i},b_{i};c_{i},d_{i}]$, we can associate it with a parameter point $y=(a_{1},b_{1},c_{1},d_{1},\ldots,a_{N},b_{N},c_{N},d_{N})\in\mathbb{C}^{4N}$. Let $A_{N}$ be the parameter space for which the corresponding group $\Gamma$ forms a Schottky group.

For any point $y\in A_{N}$ and its associated matrices $\{T_{i}\}_{i=1}^{N}$, it is well-known that we can interpret $\Gamma$ as a set of isometries of the hyperbolic 3-space $\mathbb{H}^{3}$. The quotient space $V=\Gamma\setminus\mathbb{H}^{3}$ constitutes a convex cocompact hyperbolic manifold. To apply Theorems \ref{Theorem 1.2} and \ref{Theorem 1.3} to the frame flow $f_{t}$, we need to demonstrate that there exist three closed geodesics such that the associated point $(\alpha,\beta)\in\mathbb{R}^{2}$ is an inhomogeneous Diophantine number. To achieve this, we define $T:\bigcup_{1\le i\le N}D_{i}\to\widehat{\mathbb{C}}$ by $T(z)=T_{i}(z)$ if $z\in D_{i}$. The following result can be found in~\cite{Gui04} or \cite{Pol91}.

\begin{Lemma}\label{Lemma 1.5}
The prime closed geodesics correspond one-to-one to the prime periodic orbits of $T$. Furthermore, assume $\{z,\cdots,T^{n-1}(z)\}$ is a prime periodic orbit of $T$, then the length $\ell>0$ and the holonomy $\theta\in[0,2\pi)$ of the corresponding prime closed geodesic are given by $(T^{n})^{\prime}(z)=e^{\ell+i\theta}$.
\end{Lemma}

\begin{Theorem}\label{Theorem 1.6}
For each $N\ge3$ and generic $y\in A_{N}$, we have the corresponding frame flow satisfies the superpolynomial equidistribution stated in Theorem \ref{Theorem 1.3} and is rapidly mixing with respect to the local product measures of the Lebesgue measure on $\mathbb{T}^{1}$ and Gibbs measures stated in Theorem \ref{Theorem 1.2}.  
\end{Theorem}
\begin{proof}
For a point $y\in A_{N}$, each $T_{i}$ can be represented as $T_{i}(z)=\frac{a_{i}z+b_{i}}{c_{i}z+d_{i}}$, where $a_{i}d_{i}-c_{i}b_{i}=1$ and $c_{i}\neq 0$. The M\"obius map $T_{i}$ has a fixed point $z_{i}=\frac{(a_{i}-d_{i})+\sqrt{(a_{i}+d_{i})^{2}-4}}{2c_{i}}$. Hence, 
$$T_{i}^{\prime}(z_{i})=\frac{1}{(c_{i}z_{i}+d_{i})^{2}}=\eta_{i}+\sqrt{\eta_{i}^{2}-1},$$
where $\eta_{i}=\frac{a_{i}+d_{i}}{2}$. We can express $T_{i}^{\prime}(z_{i})=e^{\ell_{i}+i\theta_{i}}$, with $\theta_{i}\in[0,2\pi)$. Considering three fixed points $z_{1},z_{2},z_{3}$, we require the corresponding point $(\alpha,\beta)$ to be an inhomogeneous Diophantine number, where $\alpha=\frac{\ell_{1}-\ell_{2}}{\ell_{2}-\ell_{3}}$ and $\beta=\frac{1}{2\pi}(\theta_{1}-\theta_{2})-\alpha\frac{1}{2\pi}(\theta_{2}-\theta_{3})$. The function $f:A_{N}\to \mathbb{R}^{2}$, $y\mapsto(\alpha,\beta)$, is continuous.
	
The set $B\subset\mathbb{R}^{2}$ of inhomogeneous Diophantine points is generic. By definition,
$$B=\bigcup_{C>0}\bigcup_{\gamma>0}\bigcap_{0\neq(q,m)\in\mathbb{Z}^{2}}\bigcap_{p\in\mathbb{Z}}B_{C,\gamma,q,m,p}\supset\bigcup_{\gamma>0}\bigcap_{0\neq(q,m)\in\mathbb{Z}^{2}}\bigcap_{p\in\mathbb{Z}}\bigcup_{C>0}B_{C,\gamma,q,m,p},$$	
where $B_{C,\gamma,q,m,p}=\{(\alpha,\beta)\in\mathbb{R}^{2}:|q\alpha+m\beta-p|>C(|q|+|m|)^{-\gamma}\}$. It's clear that for any $\gamma>0$, any $0\neq(q,m)\in\mathbb{Z}^{2}$ and any $p\in\mathbb{Z}$, the set $\bigcup_{C>0}B_{C,\gamma,q,m,p}$ is open and dense. Therefore, $B$ is a generic set. Now, from the definition of $f$, it's straightforward to show that the subset $f^{-1}(B)\subset A_{N}$ is also generic. Then, the result follows from Theorems \ref{Theorem 1.2}, \ref{Theorem 1.3} and Lemma~\ref{Lemma 1.5}.
\end{proof}

The aforementioned examples are frame flows of constant negative curvature. However, we can also provide numerous examples of frame flows with variable negative curvature as well. Consider a convex cocompact hyperbolic manifold $V=\Gamma\setminus\mathbb{H}^{3}$, and let $f_{t}$ be the associated frame flow. In the proof of Theorem \ref{Theorem 1.6}, we can select $\Gamma$ such that  there are three closed geodesics $\tau_{1},\tau_{2},\tau_{3}$ such that the associated point $(\alpha,\beta)\in\mathbb{R}^{2}$ is an inhomogeneous Diophantine number. Here, the curvature $\kappa$ is constant. Now, consider another curvature $\kappa^{\prime}$ that varies and satisfies $\kappa^{\prime}=\kappa$ in a neighbourhood of $\{\tau_{1},\tau_{2},\tau_{3}\}$. Let $f_{t}^{\prime}$ be the associated frame flow with respect to this new curvature $\kappa^{\prime}$. Consequently, we have $\tau_{1},\tau_{2},\tau_{3}$ are still three closed geodesics with respect to the modified curvature $\kappa^{\prime}$, and their lengths and holonomies remain unchanged. Hence, the variable negative curvature frame flow $f_{t}^{\prime}$ satisfies the superpolynomial equidistribution stated in Theorem \ref{Theorem 1.3} and is rapidly mixing with respect to the local product measures of the Lebesgue measure on $\mathbb{T}^{1}$ and Gibbs measures stated in Theorem \ref{Theorem 1.2}.  

\subsection{Outline of the paper}\label{subsec 1.3}

The proofs of Theorems \ref{Theorem 1.2} and \ref{Theorem 1.3} follow the same strategy as in \cite{Pol24}, which necessitates a non-trivial extension of a classical approach involving a symbolic model for \(f_t\) and the use of transfer operators. Below we outline the construction of this paper and highlight the novelty of our proof. 

\begin{itemize}
\item In \S \ref{sec 2}, we firstly collect basic definitions of hyperbolic flows and their group extensions. We then define the symbolic flow we want to study. In the end of this section, we leverage a Markov partition of the underlying hyperbolic flow to related the extension flow to a symbolic flow that we defined in the last subsection.

\item In \S \ref{sec 3}, we further reduce the results to a Dolgopyat type estimate (Proposition \ref{Dolgopyat type estimate}) for the associated transfer operator \(\mathcal{L}_{s,m}\), where the variable \(s \in \mathbb{C}\) comes from the Laplace transform of the correlation function and the variable \(m \in \mathbb{Z}^{d}\) corresponds to the irreducible representations of \(\mathbb{T}^{d}\). When \(s=0\), the operator \(\mathcal{L}_{0,m}\) corresponds to the case of \(\mathbb{T}^{d}\) extensions of hyperbolic diffeomorphisms considered in \cite{Mel18b}. When \(m=0\), the operator \(\mathcal{L}_{s,0}\) corresponds to the classical case of hyperbolic flows considered in~\cite{Dol98b}. In other words, this operator \(\mathcal{L}_{s,m}\) combines characteristics of these two types of systems. Different from the estimate in \cite{Fie05}, where the spectral radius of \(\mathcal{L}_{s,m}\) could only be quantified by the variable \(s\), we show in Proposition \ref{Dolgopyat type estimate} that the spectral radius of \(\mathcal{L}_{s,m}\) can be quantified by both the variables \(s\) and \(m\). This improvement is key to proving that rapid mixing of \(f_t\) holds for all smooth functions. 

\item In \S \ref{sec 4}, we prove Proposition \ref{Dolgopyat type estimate}.  A key idea that makes the proof works is the particular choice of norm used on each of the components associated to the irreducible representations of $\mathbb{T}^{d}$. The main ingredient in the proof is a cancellation estimate stated in Lemma \ref{Lemma 3.4.4}, whose proof will be given in \S \ref{sec 5}.
\end{itemize}

\begin{Remark}
The argument can potentially be adapted to more general non-uniformly hyperbolic systems, such as those studied in \cite{Mel18}. However, since our primary application concerns frame flows, which are torus group extensions of uniformly hyperbolic systems, we focus exclusively on uniformly hyperbolic flows. This restriction is sufficient for our purposes.
\end{Remark}

\noindent\textit{Acknowledgments}. I would like to express my sincere gratitude to Professor Mark Pollicott and Professor Ian Melbourne for their insightful suggestions and valuable comments. I am also deeply thankful to the referees for their constructive feedback and recommendations. Furthermore, I gratefully acknowledge financial support from the ERC Advanced Grant 833802-Resonances.

\section{Basic definitions, Markov partitions and symbolic dynamics}\label{sec 2}

\subsection{Definitions of hyperbolic flows and their group extensions}\label{subsec 2.1}

Let $M$ be a smooth compact Riemannian manifold, and let $TM$ be its tangent bundle. Let $g_{t}:M\to M$ be a $C^{\infty}$ flow, and let $\Lambda\subset M$ be a compact $g_{t}$-invariant set. We assume $g_{t}:\Lambda\to\Lambda$ is a hyperbolic flow, meaning that:
\begin{itemize}
	\item there is a $Dg_{t}$-invariant splitting $T_{\Lambda}M=E^{s}\oplus E^{c}\oplus E^{u}$ over $\Lambda$, and exist constant $C>0$ and $\delta>0$ such that $E^{c}$ is the line bundle tangent to $g_{t}$, and $||Dg_{t}|_{E^{s}}||,\  ||Dg_{-t}|_{E^{u}}||\le Ce^{-\delta t}$ for all $t\ge0$;
	\item $\Lambda$ consists of more than a single closed orbit, and closed orbits of $g_{t}$ in $\Lambda$ are dense;
	\item $g_{t}$ is transitive on $\Lambda$ which means for any open sets $U,V$ there exists arbitrarily large $t>0$ such that $U\cap g_{t}V\not=\emptyset$, and there is an open set $\mathcal{U}\supset\Lambda$ such that $\Lambda=\bigcap_{t=-\infty}^{\infty}g_{t}\mathcal{U}$.
\end{itemize}
 	
For a continuous real-valued function \(\Phi\) on \(\Lambda\), its pressure \(P(\Phi)\) is defined as the supremum of \(h_{\nu}(g_{1}) + \int_{\Lambda} \Phi \, d\nu\) taken over all \(g_{t}\)-invariant probability measures \(\nu\), where \(h_{\nu}(g_{1})\) is the measure-theoretic entropy of \(\nu\) with respect to the time-one map \(g_{1}\) \cite{Wal82}. Let \(\Phi\) be a H\"older continuous real-valued function on \(\Lambda\), it is well known \cite{Bow75} that there exists a unique \(g_{t}\)-invariant probability measure \(\mu_{\Phi}\) on \(\Lambda\) that satisfies \(h_{\mu_{\Phi}}(g_{1}) + \int_{\Lambda} \Phi \, d\mu_{\Phi} = P(\Phi)\). We call \(\mu_{\Phi}\) the Gibbs measure of \(\Phi\).

After introducing the necessary concepts about hyperbolic flows, we now shift our focus to the primary research subject of this paper: $\mathbb{T}^{d}$-extensions of $g_t$. These flows are no longer hyperbolic and represent one of the simplest examples of partially hyperbolic flows. Let $\widehat{M}$ be a smooth  principle $\mathbb{T}^{d}$-bundle over $M$ with the bundle projection $\varrho:\widehat{M}\to M$. We consider a $C^{\infty}$ flow $f_{t}$ on $\widehat{M}$ and denote $\widehat{\Lambda}:=\varrho^{-1}(\Lambda)$.

\begin{Definition}
We say $f_{t}:\widehat{\Lambda}\to\widehat{\Lambda}$ is a $\mathbb{T}^{d}$-extension of $g_{t}$ if 
\begin{enumerate}
\item for any $t\in\mathbb{R}$, we have $\varrho\circ f_{t}=g_{t}\circ\varrho$ on $\widehat{\Lambda}$;

\item for any $\theta\in \mathbb{T}^{d}$ and any $t\in\mathbb{R}$, we have $f_{t}\circ R_{\theta}=R_{\theta}\circ f_{t}$ on $\widehat{M}$, where $R_{\theta}$ presents the $\theta$-rotation.
\end{enumerate}
\end{Definition}

The Gibbs measure $\mu_{\Phi}$ of $\Phi$ on $\Lambda$ naturally induces a probability measure $\widehat{\mu}_{\Phi}$ on $\widehat{\Lambda}$ which is the local product of $\mu_{\Phi}$ and the Lebesgue measure on $\mathbb{T}^{d}$. By definition, we have $\widehat{\mu}_{\Phi}=\int_{\Lambda}\text{Leb}_{x}d\mu_{\Phi}(x)$, where $\text{Leb}_{x}$ is the push-forward of the Lebesgue measure on $\mathbb{T}^{d}$ to the fibre $\varrho^{-1}(x)$. Then, it is not difficult to show that the extension flow $f_{t}$ preserves $\widehat{\mu}_{\Phi}$. We are interested in the asymptotic behaviour of the following quantity.

\begin{Definition}
	For $E,F : \widehat \Lambda \to \mathbb C$, their correlation function with respect to $\widehat{\mu}_{\Phi}$ is defined by 
	$$
	\rho_{E,F}(t): = \int_{\widehat{\Lambda}} E\circ f_t  \cdot F d\widehat \mu_{\Phi}  - \int_{\widehat{\Lambda}}  E d\widehat \mu_{\Phi} \int_{\widehat{\Lambda}}  Fd\widehat \mu_{\Phi}, \hbox{ for } t \in \mathbb R.
	$$
\end{Definition}

The flow $f_t : \widehat \Lambda \to \widehat \Lambda$ is  said to be mixing with respect to  $\widehat \mu_{\Phi}$ if  $\rho_{E,F}(t) \to 0$ as $t \to +\infty$ for any $E$ and $F\in L^{2}(\widehat{\mu}_{\Phi})$. In this paper, we will show that a certain Diophantine condition on closed orbits leads to a stronger effective estimate for $\rho_{E,F}(t)$ of the following form.

\begin{Definition}
	We say $f_{t}$ is rapidly mixing with respect to $\widehat \mu_{\Phi}$ if 
	for any $n\in\mathbb{N}^{+}$ there exist $C>0$ and $k\in\mathbb{N}^{+}$ such that
	for any $E, F\in C^{k}(\widehat{M})$ and any $t>0$, we have $|\rho_{E,F}(t)| \leq C||E||_{C^{k}}||F||_{C^{k}}t^{-n}$.
\end{Definition}

Given a closed orbit $\tau$ of $g_{t}$ with period $\ell_{\tau}$, for any $x\in\tau$ and any trivialization at $x$ there exists a unique $\theta_{\tau}(x)\in\mathbb{T}^{d}$ such that
$$f_{\ell_{\tau}}(x,0)=(g_{\ell_{\tau}}(x),\theta_{\tau}(x))=(x,\theta_{\tau}(x)).$$
Since \(f_{t}\) is a \(\mathbb{T}^{d}\) extension, it is not difficult to show that the quantity \(\theta_{\tau}(x)\) is independent of the choices of \(x \in \tau\) and the trivialization at \(x\). This is because if we take another trivialization, i.e., a coordinate $\theta^{\prime}=\theta+\theta_{0}$, then $(x,0)$ in the changed coordinate system $(x,\theta^{\prime})$ corresponds $(x,-\theta_{0})$ in the old coordinate system $(x,\theta)$. By the definition of a \(\mathbb{T}^{d}\) extension, in the changed coordinate system we then have $f_{\ell_{\tau}}(x,0)=(x,-\theta_{0}+\theta_{\tau}(x)+\theta_{0})=(x,\theta_{\tau}(x))$.  Thus, each closed orbit \(\tau\) of \(g_{t}\) uniquely determines an element \(\theta_{\tau} = \theta(x) \in \mathbb{T}^{d}\), which we call the holonomy of \(\tau\). Given three closed orbits \(\tau_{1}, \tau_{2}, \tau_{3}\) of \(g_{t}\) with periods \(\ell_{1}, \ell_{2}, \ell_{3}\) and holonomies \(\theta_{1}, \theta_{2}, \theta_{3}\), by taking a lift, we can think of \(\theta_{1}, \theta_{2}, \theta_{3}\) as elements in \([0, 2\pi)^{d}\). We then associate them with a number \((\alpha, \beta) \in \mathbb{R}^{d+1}\) by 
$$\alpha:=\frac{\ell_{1}-\ell_{2}}{\ell_{2}-\ell_{3}}\in\mathbb{R}\quad\text{and}\quad\beta:=\frac{1}{2\pi}(\theta_{1}-\theta_{2})-\alpha\frac{1}{2\pi}(\theta_{2}-\theta_{3})\in\mathbb{R}^{d}.$$
The following concept is mentioned in Theorem \ref{Theorem 1.2} and Theorem \ref{Theorem 1.3}.

\begin{Definition}\label{Definition 2.4}
We say $(\alpha, \beta)\in \mathbb{R}^{d+1}$ is an inhomogeneous Diophantine number if for some $C>0$ and $\gamma>0$ we have $|q\alpha+m\beta-p|\ge C(|q|+|m|)^{-\gamma}$ for any $p\in\mathbb{Z}$ and any $0\not=(q,m)\in\mathbb{Z}^{d+1}$.
\end{Definition}

The concept of inhomogeneous Diophantine number is introduced in the context of Diophantine approximation. It is known that the set of inhomogeneous Diophantine numbers in $\mathbb{R}^{d+1}$ is generic and has full Lebesgue measure \cite{Bug05}  by Bugeaud and Laurent.

\subsection{Suspension flows of $\mathbb{T}^{d}$ skew products over two-sided subshifts}\label{subsec 2.2}

Using a Markov partition for the underlying hyperbolic flow \(g_{t}\), the extension flow \(f_{t}\) can be represented as the suspension flow of a \(\mathbb{T}^{d}\)-skew product over a two-sided subshift of finite type. This connection allows us to address the rapid mixing of \(f_{t}\) by linking it to an analogous statement about the rapid mixing of the symbolic flow. In this subsection, we define the symbolic flow and outline its fundamental properties.

We first review classical subshifts of finite type and their associated symbolic models, with~\cite{Par90} serving as a helpful reference. Let $A$ be an $N\times N$ matrix of zeros and ones. We assume $A$ is aperiodic, meaning that some power of $A$ is a positive matrix. Let $X := X_{A}$ be the two-sided symbolic space associated with $A$, defined as $X = \{x=(x_{i})_{i=-\infty}^{\infty}\in\{1,\cdots,N\}^{\mathbb{Z}}: A(x_{i}, x_{i+1}) = 1\}$, and let $\sigma:X\to X$ be the two-sided subshift, defined as $\sigma(x)_{i} = x_{i+1}$. Given $\lambda \in (0, 1)$, we can define a metric $d_{\lambda}$ on $X$ by $d_{\lambda}(x, y) := \lambda^{N(x,y)}$, where $N(x,y) = \min\{|i| : x_{i} \neq y_{i}\}$. We say $\lambda\in(0,1)$ is a metric constant on $X$ means that $X$ is assigned the metric $d_{\lambda}$. It is not difficult to show that $\sigma:X\to X$ is topologically mixing, which means for any two open sets $U,V\subset X$ and any large enough $n\in\mathbb{N}^{+}$ we have $U\cap\sigma^{n}V\not=\emptyset$, if and only if $A$ is aperiodic. 

\begin{Definition}
	Let $\lambda\in(0,1)$ be a metric constant on $X$.
	\begin{itemize}
		\item Denote by $F_{\lambda}(X)$ the Banach space of all Lipschitz continuous complex-valued functions on $(X,d_{\lambda})$ with respect to the Lipschitz norm $||\cdot||_{\text{Lip}}:=|\cdot|_{\text{Lip}} + |\cdot|_{\infty}$, where $|\cdot|_{\text{Lip}}$ is the Lipschitz semi-norm and $|\cdot|_{\infty}$ is the supremum norm. 
		
		\item Denote by $F_{\lambda}(X, \mathbb{R})$ the set of real-valued functions in $F_{\lambda}(X)$. For any $n\in\mathbb{N}^{+}$, denote by $F_{\lambda}(X, \mathbb{R}^{n})$ the set of functions $h=(h_{i})_{i=1}^{n}:X\to\mathbb{R}^{n}$ with $h_{i}\in F_{\lambda}(X, \mathbb{R})$.
	\end{itemize}
\end{Definition}

For each $n\in\mathbb{N}^{+}$, the $n$-cylinders are sets of the form $[x_{-n+1}\cdots x_{n-1}]_{n} = \{y \in X : y_{i} = x_{i}, |i| \le n-1\}$. For each $n\in\mathbb{N}^{+}$, let $F_{n}(X)$ be the set of functions in $F_{\lambda}(X)$ that are constant on $n$-cylinders. The functions in \(F_{n}(X)\) are referred to as locally constant functions. All of the above objects can be similarly defined for a one-sided subshift of finite type \(\sigma:X^{+}\to X^{+}\), where \(X^{+}=\{x=(x_{i})_{i\ge0}\in\{1,\ldots,N\}^{\mathbb{N}} : A(x_{i}, x_{i+1})=1\}\), using the same notations but replacing \(X\) with \(X^{+}\).

For a potential \(\varphi\in F_{\lambda}(X,\mathbb{R})\), it is well-known \cite{Bow08} that there exists a unique equilibrium state of \(\varphi\) on \(X\), which is called the Gibbs measure of \(\varphi\) on \(X\), denoted as \(\mu_{\varphi}\). By adding  a coboundary \(h\circ\sigma-h\) to $\varphi$ if necessary, we can assume \(\varphi\) depends only on future coordinates, and thus \(\varphi\in F_{\lambda}(X^{+},\mathbb{R})\).\footnote{ There is a cost on the regularity to assume  $\varphi$ only depends on future coordinates. Actually, by adding  a coboundary \(h\circ\sigma-h\) to $\varphi$, the new potential $\varphi+h\circ\sigma-h$ belongs to $F_{\sqrt{\lambda}}(X,\mathbb{R})$. However, since $F_{\lambda}(X,\mathbb{R})\subset F_{\sqrt{\lambda}}(X,\mathbb{R})$, after replacing $\lambda$ by $\sqrt{\lambda}$ which will not affect the subsequent proof, we have \(\varphi\in F_{\lambda}(X,\mathbb{R})\).} Then, there exists a unique equilibrium state of \(\varphi\) on \(X^{+}\), which is called the Gibbs measure of \(\varphi\) on \(X^{+}\), denoted as \(\mu\). Let \(\Pi_{+}:X\to X^{+}\) be the coordinate projection. It is known that \(\Pi_{+}^{*}\mu_{\varphi}=\mu\) \cite{Bow08}. For each \(n\in\mathbb{N}^{+}\), let \(\varphi_{n}=\sum_{i=0}^{n-1}\varphi\circ\sigma^{i}\). The Gibbs measure \(\mu\) satisfies the following Gibbs property, see also \cite{Par90}.

\begin{Lemma}\label{Gibbs property}
There exists $C_{1}\geq 1$ such that 
$$C_{1}^{-1}\le \dfrac{\mu[x_{0}\cdots x_{n-1}]_{n}}{e^{\varphi_{n}(x)-nP(\varphi)}}\le C_{1},$$
for any $n\in\mathbb{N}^{+}$ and any $x\in X^{+}$ where $P(\varphi)$ presents the pressure of $\varphi$.
\end{Lemma}

Let $\sigma:X\to X$ be a two-sided subshift. Given a function $\Theta\in F_{\lambda}(X, \mathbb{R}^{d})$, let $\widehat{\sigma}:\widehat{X}\to\widehat{X}$ be the skew product, namely, 
$$
\widehat{\sigma}:\widehat{X}\to\widehat{X},\quad \widehat{\sigma}(x,\theta)=(\sigma(x),\Theta(x)+\theta\mod 2\pi),
$$
where $\widehat{X}=X\times \mathbb{T}^{d}$. Let $d\theta$ be the Lebesgue measure on $\mathbb{T}^{d}$. It is easy to see that $\widehat{\mu}_{\varphi}:={\mu}_{\varphi}\times d\theta$ is $\widehat{\sigma}$-invariant. Given $r\in F_{\lambda}(X,\mathbb{R}^{+})$, we can think of $r$ is a continuous function on $\widehat{X}$ by setting $r(x,\theta)=r(x)$. Then, we define the suspension space as $\widehat{X}_{r}:=\{(x,\theta,u):0\le u\le r(x)\}/\sim$, where $(x,\theta,r(x))\sim(\sigma(x),\Theta(x)+\theta,0)$. Let $\phi_{t}:\widehat{X}_{r}\to\widehat{X}_{r}$
be the suspension flow of $\widehat{\sigma}$ under $r$, namely,
$$
\phi_{t}:\widehat{X}_{r}\to \widehat{X}_{r},\quad \phi_{t}(x,\theta,u)=(x,\theta, u+t),\quad t\ge0,
$$
with respect to $\sim$ on $\widehat{X}_{r}$. The $\widehat{\sigma}$-invariant probability measure $\widehat{\mu}_{\varphi}$ on $\widehat{X}$ induces a natural probability measure $\widehat{\mu}_{\varphi}\times$Leb on $\widehat{X}_{r}$ by
$$
\int_{\widehat{X}_{r}}Fd(\widehat{\mu}_{\varphi}\times\text{Leb})=\dfrac{1}{\int_{\widehat{X}}rd\widehat{\mu}_{\varphi}}\int_{\widehat{X}}\int_{0}^{r(x)}F(x,\theta,u)dud\widehat{\mu}_{\varphi}(x,\theta),\quad F\in C(\widehat{X}_{r}).
$$ 
It can be shown that $\widehat{\mu}_{\varphi}\times\text{Leb}$ is $\phi_{t}$-invariant \cite{Par90}.

With a little abuse of notations, we still use $\rho_{E,F}$ to denote  the correlation function of test functions $E$ and $F\in C(\widehat{X}_{r})$ for the symbolic flow $\phi_{t}$. In other words, 
$$\rho_{E,F}(t)=\int_{\widehat{X}_{r}}E\circ\phi_{t}\cdot Fd(\widehat{\mu}_{\varphi}\times\text{Leb})-\int_{\widehat{X}_{r}}Ed(\widehat{\mu}_{\varphi}\times\text{Leb})\int_{\widehat{X}_{r}}Fd(\widehat{\mu}_{\varphi}\times\text{Leb}).$$
We can also study the rate of mixing of $\phi_{t}$ with respect to $\widehat{\mu}_{\varphi}\times$Leb. However, since the symbolic space $X$ lacks differential structure, the test functions $E$ and $F$ we should consider are a little different from the case of $f_{t}$.

\begin{Definition}\label{Definition 2.7}
	We will use the following notation.
	\begin{itemize}
		\item For a complex-valued function $E$ on $\widehat{X}_{r}$, define 
		$$|E|_{\lambda}:=\sup_{\theta\in \mathbb{T}^{d}}\sup_{x\not=y}\sup_{ u\in[0,\min\{r(x),r(y)\}]}\dfrac{|E(x,\theta,u)-E(y,\theta,u)|}{d_{\lambda}(x,y)}.$$
		Define $||E||_{\lambda}:=|E|_{\infty}+|E|_{\lambda}$ where $|E|_{\infty}:=\sup_{(x,\theta,u)}|E(x,\theta,u)|$.
		
		\item For each $k\in\mathbb{N}^{+}$, let $F^{k}_{\lambda}(\widehat{X}_{r})$ be the set of all complex-valued functions $E$ on $\widehat{X}_{r}$ which are $C^{k}$ with respect to $(\theta,u)$ and $||E||_{\lambda,k}<\infty$ where $||E||_{\lambda,k}:=\sup_{k_{1}+k_{2}\le k}||\frac{\partial^{k_{1}+k_{2}}E}{\partial \theta^{k_{1}}\partial u^{k_{2}}}||_{\lambda}$.
	\end{itemize}
\end{Definition}

This leads to the following definition of rapid mixing which is appropriate at the symbolic level.

\begin{Definition}\label{Definition 2.8}
We say $\phi_{t}$ is rapidly mixing with respect to $\widehat{\mu}_{\varphi}\times$Leb if for any $n\in\mathbb{N}^{+}$ there exist $C>0$ and $k\in\mathbb{N}^{+}$ such that for any $E,F\in F_{\lambda}^{k}(\widehat{X}_{r})$ and any $t>0$ we have $|\rho_{E,F}(t)|\le C||E||_{\lambda,k}||F||_{\lambda,k}t^{-n}$.
\end{Definition}

\subsection{Relating the extension flow to a symbolic flow}\label{subsec 2.3}

The foundation for this subsection has been laid out in \cite[\S 6.5]{Pol24}. However, we find it necessary to briefly revisit these details. This serves not only to ensure the completeness of our discussion but also to introduce the essential notations required to translate the Diophantine condition on closed orbits for the extension flow \(f_{t}\) into a corresponding Diophantine condition for the symbolic flow \(\phi_{t}\).

Let $f_{t}$ be a $C^{\infty}$ $\mathbb{T}^{d}$-extension of a hyperbolic flow $g_{t}$. We assume that $f_{t}$ satisfies the assumption of Theorem \ref{Theorem 1.2}, that is there exist three closed orbits such that the associated point $(\alpha,\beta)$ is an inhomogeneous Diophantine number. By a classical result of Bowen \cite{Bow73}, we can construct a Markov section \(\{R_{i}\}_{i=1}^{N}\) for the flow \( g_{t}:\Lambda \to \Lambda \) with arbitrarily small size. While the precise definition of a Markov section is not necessary for our purposes, it suffices to note that it is a collection of finitely many special local cross-sections of the hyperbolic flow \( g_{t} \) such that the first return map exhibits suitable hyperbolic properties.  More concretely, let \( R := \bigcup_{i} R_{i} \), and let \( P: R \to R \) denote the first return map, with \( r: R \to \mathbb{R}^{+} \) representing the first return time. Through a natural semi-conjugacy \( \Pi: X \to R \), we can identify \( P: R \to R \) with a two-sided subshift \( \sigma: X \to X \). Likewise, \( r \) can be regarded as a function on \( X \) by composition with \( \Pi \), i.e., \( r \circ \Pi \).

This Markov section $\{R_{i}\}_{i=1}^{N}$ for the underlying hyperbolic flow $g_{t}$ naturally gives rise to a finite number of local cross-sections $\{\varrho^{-1}(R_{i})\}_{i=1}^{N}$ for the extension flow $f_{t}:\widehat{\Lambda}\to\widehat{\Lambda}$, where $\varrho:\widehat{M}\to M$ serves as the bundle projection. For each $\varrho^{-1}(R_{i})$, we can trivialize it as $R_{i}\times\mathbb{T}^{d}$. Let $\widehat{R}:=R\times \mathbb{T}^{d}$. By construction, the first return time function with respect to $f_{t}$ on $\widehat{R}$ is equal to $r$. Let $\widehat{P}:\widehat{R}\to \widehat{R}$ be the first return map. In \cite[Equation (6.5.2)]{Pol24}, we have seen that $\widehat{P}$ is a skew product of $P$ with a skew function $\Theta:R\to \mathbb{R}^{d}$. Similarly, we can think of $\Theta$ as a function on $X$ by considering $\Theta\circ\Pi$. Since $P:R\to R$ is a two-sided subshift $\sigma:X\to X$ via the semi-conjugacy $\Pi$, we can regard $\widehat{P}$ as the skew product $\widehat{\sigma}:\widehat{X}\to\widehat{X}$ with the skew function $\Theta$. 

Let $\phi_{t}:\widehat{X}_{r}\to\widehat{X}_{r}$ be the suspension flow of $\widehat{\sigma}:\widehat{X}\to\widehat{X}$ under $r$. Now, it naturally identities the extension flow $f_{t}$ with the symbolic flow $\phi_{t}$. As explained in \cite[\S 6.5]{Pol24}, we can assume $r$ and $\Theta$ are functions on the corresponding one-sided space $X^{+}$, and $r\in F_{\lambda}(X^{+},\mathbb{R}^{+})$ as well as $\Theta\in F_{\lambda}(X^{+},\mathbb{R}^{d})$ for a suitable choice of the metric constant $\lambda\in(0,1)$. The potential $\Phi$ on $\Lambda$ induces a potential $\varphi$ on $X$. Let $\mu_{\varphi}$ be the Gibbs measure of $\varphi$, and let $\widehat{\mu}_{\varphi}\times$Leb be the associated $\phi_{t}$-invariant measure on $\widehat{X}_{r}$. The following result is from \cite[Lemma 6.5.4]{Pol24}.

\begin{Lemma}\label{Lemma 2.9}
If $\phi_{t}$ is rapidly mixing with respect to $\widehat{\mu}_{\varphi}\times$Leb, then $f_{t}$ is rapidly mixing with respect to $\widehat{\mu}_{\Phi}$. 
\end{Lemma}

For any $n\in\mathbb{N}^{+}$, denote by $r_{n}:=\sum_{i=0}^{n-1}r\circ\sigma^{n}$ and $\Theta_{n}:=\sum_{i=0}^{n-1}\Theta\circ\sigma^{n}$. The Diophantine assumption in Theorem \ref{Theorem 1.2} for the extension flow $f_{t}$ implies the following condition on $\phi_{t}$.

\begin{Lemma}\label{Lemma 2.10}
After modifying the symbolic coding, there exist three periodic points $\{x_{1},x_{2},x_{3}\}$ of $\sigma$ with the same period $N^{\prime}\in\mathbb{N}^{+}$ such that the associated point $(\alpha^{\prime},\beta^{\prime})\in\mathbb{R}^{d+1}$ is an inhomogeneous Diophantine number, where
	$$\alpha^{\prime}=\dfrac{r_{N^{\prime}}(x_{1})-r_{N^{\prime}}(x_{2})}{r_{N^{\prime}}(x_{2})-r_{N^{\prime}}(x_{3})}\quad\text{and}\quad\beta^{\prime}=\dfrac{1}{2\pi}(\Theta_{N^{\prime}}(x_{1})-\Theta_{N^{\prime}}(x_{2}))-\dfrac{1}{2\pi}\alpha^{\prime}(\Theta_{N^{\prime}}(x_{2})-\Theta_{N^{\prime}}(x_{3})).$$
\end{Lemma}
\begin{proof}
	Let $\tau_{1},\tau_{2},\tau_{3}$ be three closed orbits of $g_{t}$, with periods $\ell_{1},\ell_{2},\ell_{3}$ and holonomies $\theta_{1},\theta_{2},\theta_{3}$. Let $(\alpha,\beta)\in\mathbb{R}^{d+1}$ be the corresponding point, which means
	$$\alpha=\dfrac{\ell_{1}-\ell_{2}}{\ell_{2}-\ell_{3}}\in\mathbb{R}\quad\text{and}\quad\beta=\dfrac{1}{2\pi}(\theta_{1}-\theta_{2})-\dfrac{1}{2\pi}\alpha(\theta_{2}-\theta_{3})\in\mathbb{R}^{d}.$$
	For each closed orbit $\tau_{j}$, there exists a periodic point $x_{j}\in X$ with $\sigma^{N_{j}}(x_{j})=x_{j}$ such that
	$$r_{N_{j}}(x_{j})=\ell_{j}\quad\text{and}\quad \Theta_{N_{j}}(x_{j})=\theta_{j}\text{ mod }2\pi.$$
	Consequently, $\alpha^{\prime}=\alpha$ and $\beta^{\prime}=\beta\text{ mod }1$ where
	$$\alpha^{\prime}=\dfrac{r_{N_{1}}(x_{1})-r_{N_{2}}(x_{2})}{r_{N_{2}}(x_{2})-r_{N_{3}}(x_{3})}\quad\text{and}\quad\beta^{\prime}=\dfrac{1}{2\pi}(\Theta_{N_{1}}(x_{1})-\Theta_{N_{2}}(x_{2}))-\dfrac{1}{2\pi}\alpha^{\prime}(\Theta_{N_{2}}(x_{2})-\Theta_{N_{3}}(x_{3})).$$
	In particular, by definition, $(\alpha,\beta)$ being an inhomogeneous Diophantine number implies $(\alpha^{\prime},\beta^{\prime})$ is also an inhomogeneous Diophantine number.
	
It remains to show that these periodic points have the same period, namely \(N_{1} = N_{2} = N_{3}\). This can be done as follows. For a fixed closed orbit of \(g_{t}\), say \(\tau\), associated with a periodic point \(x \in X\) of \(\sigma\) with period \(n\), we have that \(\tau\) must pass through sections \(R_{x_{0}}\) and \(R_{x_{1}}\). We can then insert an arbitrary finite number of local cross-sections between \(R_{x_{0}}\) and \(R_{x_{1}}\), which are parallel to \(R_{x_{0}}\). It is easy to see that the new section is still a Markov section, and the closed orbit \(\tau\) associated with a periodic point \(x \in X\) then has period \(n+q\), where \(q\) is the number of parallel local cross-sections we insert. Consequently, we can always ensure \(N_{1} = N_{2} = N_{3}\) by inserting the appropriate number of parallel sections as described above.
\end{proof}

\section{A Dolgopyat type estimate}\label{sec 3}

To prove Theorem \ref{Theorem 1.2}, by Lemma \ref{Lemma 2.9}, we need to show that the symbolic flow \(\phi_{t}\) is rapidly mixing with respect to \(\widehat{\mu}_{\varphi} \times \text{Leb}\). The proof follows the same approach as that in \cite{Pol24}. The standard strategy for proving rapid mixing of \(\phi_{t}\) involves investigating the Laplace transform of its correlation function. By demonstrating that the Laplace transform can be analytically extended to a specific region in the left half-plane, one can derive a decay rate for the correlation function, which depends on the shape of this region. The Laplace transform can be expressed as a sum of iterations of transfer operators, and the analytical extension can be established through a Dolgopyat type estimate of these transfer operators, as stated in Proposition \ref{Dolgopyat type estimate}. Furthermore, following the same line as the proof of \cite[Theorem 4]{Pol24} in \cite[\S 8]{Pol24}, the estimate established in Proposition \ref{Dolgopyat type estimate} also allows us to ensure the superpolynomial equidistribution of \(\phi_{t}\) and thus the superpolynomial equidistribution of \(f_{t}\) as well. This, in turn, proves Theorem \ref{Theorem 1.3}. This section is dedicated to fulfilling this objective.

Recalling the correlation function for $E, F\in F_{\lambda}^{k}(\widehat{X}_{r})$ as
$$
\rho_{E,F}(t)=\int_{\widehat{X}_{r}} E\circ\phi_{t}\cdot Fd\widehat{\mu}_{\varphi}\times\text{Leb}-\int_{\widehat{X}_{r}}Ed\widehat{\mu}_{\varphi}\times\text{Leb}\int_{\widehat{X}_{r}} Fd\widehat{\mu}_{\varphi}\times\text{Leb}.
$$
As usual, by replacing $E$ with $E-\int_{\widehat{X}_{r}}Ed\widehat{\mu}_{\varphi}\times\text{Leb}$, we can assume $\int_{\widehat{X}_{r}}Ed\widehat{\mu}_{\varphi}\times\text{Leb}=\int_{\widehat{X}_{r}}Fd\widehat{\mu}_{\varphi}\times\text{Leb}=0$. Using the Fourier expansion on $\mathbb{T}^{d}$, we can express $E$ and $F$ as:
$$
E(x,\theta,u)=\sum_{m\in\mathbb{Z}^{d}}e_{m}(x,u)e^{im\theta}\quad\text{and}\quad F(x,\theta,u)=\sum_{m\in\mathbb{Z}^{d}}f_{m}(x,u)e^{im\theta},
$$
where $e_{m}(x,u)=\int_{\mathbb{T}^{d}}E(x,\theta,u)e^{-im\theta}d\theta$ and $f_{m}(x,u)=\int_{\mathbb{T}^{d}}F(x,\theta,u)e^{-im\theta}d\theta$. Let $E_{m}(x,\theta,u):=e_{m}(x,u)e^{im\theta}$ and $F_{m}(x,\theta,u):=f_{m}(x,u)e^{im\theta}$. For any $s\in\mathbb{C}$, denote by
$$e_{m,s}(x):=\int^{r(x)}_{0}e^{-su}e_{m}(x,u)du\quad\text{and}\quad f_{m,s}(x):=\int^{r(x)}_{0}e^{-su}f_{m}(x,u)du.$$
If $E, F\in F_{\lambda}^{k}(\widehat{X}_{r})$ with $k$ large, it can be evident that $e_{m,s}$ and $f_{m,s}$ belong to $F_{\lambda}(X)$ for any $m\in\mathbb{Z}^{d}$ and any $s\in\mathbb{C}$. A direct calculation yields the following estimate. Recall the constant $C_{1}>0$ in Lemma \ref{Gibbs property}.

\begin{Lemma}\label{Lemma 3.1}
Increasing $C_{1}>0$ if necessary,  for any $E,F\in F_{\lambda}^{k}(\widehat{X}_{r})$ and any $k_{2}\in\mathbb{N}^{+}$ with $k_{2}\le k$, we have,
$$||e_{m,s}||_{\lambda}\le\dfrac{C_{1}(1+|s|)||E||_{\lambda,k_{2}}}{|m|^{k_{2}}}\quad\text{and}\quad||f_{m,s}||_{\lambda}\le\dfrac{C_{1}(1+|s|)||F||_{\lambda,k_{2}}}{|m|^{k_{2}}},$$
for any $s\in\mathbb{C}$ and any $0\not=m\in\mathbb{Z}^{d}$.
\end{Lemma}

By the Fourier expansions of $E$ and $F$, we can decompose $\rho_{E,F}$ as follows.

\begin{Lemma}\label{Lemma 3.2}
For any $E, F \in F_{\lambda}^{k}(\widehat{X}_{r})$ with $k\in\mathbb{N}^{+}$ large, we have $\rho_{E,F}(t)=\sum_{m\in\mathbb{Z}^{d}}\rho_{E_{m},F_{-m}}(t)$ for any $t\ge0$.
\end{Lemma}
\begin{proof}
If $k$ is large, then the expansions $E(x,\theta,u)=\sum_{m\in\mathbb{Z}^{d}}E_{m}(x,\theta,u)=\sum_{m\in\mathbb{Z}^{d}}e_{m}(x,u)e^{im\theta}$ and $F(x,\theta,u)=\sum_{m\in\mathbb{Z}^{d}}F_{m}(x,\theta,u)=\sum_{m\in\mathbb{Z}^{d}}f_{m}(x,u)e^{im\theta}$ are absolutely convergent. In particular, we can write
\begin{equation}\label{3.1}
\begin{aligned}
\rho_{E,F}(t)=&\sum_{m\in\mathbb{Z}^{d}}\sum_{m^{\prime}\in\mathbb{Z}^{d}}\int_{X}\int_{\mathbb{T}^{d}}\int_{0}^{r(x)}E_{m}(x,\theta,u+t)F_{m^{\prime}}(x,\theta,u)dud\theta d\mu_{\varphi}(x)\\
=&\sum_{m\in\mathbb{Z}^{d}}\sum_{m^{\prime}\in\mathbb{Z}^{d}}\int_{X}\int_{0}^{r(x)}\int_{\mathbb{T}^{d}}E_{m}(x,\theta,u+t)F_{m^{\prime}}(x,\theta,u)d\theta dud\mu_{\varphi}(x).
\end{aligned}
\end{equation}
For any $(x,u)$ and any $t\ge0$, by the definitions of $E_{m}$ and $F_{m}$, we have
$$
\int_{\mathbb{T}^{d}}E_{m}(x,\theta,u+t)F_{m^{\prime}}(x,\theta,u)d\theta=0,\quad\text{if }m\not=-m^{\prime}.
$$
We then substitute the above formula into \eqref{3.1} to complete the proof.
\end{proof}

Given $s \in \mathbb{C}$, we express $s$ as $s =a+ib$. For each $m\in\mathbb{Z}^{d}$, consider the Laplace transform $\widehat{\rho}_{E_{m},F_{-m}}(s) = \int_{0}^{\infty} e^{-st}\rho_{E_{m},F_{-m}}(t)dt$. Since $\rho_{E_{m},F_{-m}}$ is bounded, $\widehat{\rho}_{E_{m},F_{-m}}$ is analytic on $\{s=a+ib: a > 0\}$. It is classical \cite{Pol85} that we can express $\widehat{\rho}_{E_{m},F_{-m}}$  as a sum of transfer operators. This can be achieved using the following result.

\begin{Lemma}\label{Lemma 3.3}
	For any $s=a+ib$ with $a>0$, we have
	$$\widehat{\rho}_{E_{m},F_{-m}}(s)=\int_{0}^{|r|_{\infty}}e^{-st}\Delta_{m}(t)dt+\sum_{n=1}^{\infty}\int_{X}e^{-sr_{n}}e^{im\Theta_{n}}e_{m,s}\circ\sigma^{n}\cdot f_{-m,-s}d\mu_{\varphi},$$
	where $\Delta_{m}(t)=\int_{X}\int_{\mathbb{T}^{d}}\int_{0}^{\max\{0,r-t\}}E_{m}\circ\phi_{t}\cdot F_{-m}dud\theta d\mu_{\varphi}$.
\end{Lemma}
\begin{proof}
	By definition, 
	\begin{equation}\label{3.2}
		\begin{aligned}
			&\widehat{\rho}_{E_{m},F_{-m}}(s)=\int_{0}^{\infty}e^{-st}\rho_{E_{m},F_{-m}}(t)dt\\
			=&\int_{0}^{\infty}e^{-st}\Delta_{m}(t)dt+\int_{0}^{\infty}\int_{X}\int_{\mathbb{T}^{d}}\int_{\max\{0,r(x)-t\}}^{r(x)}e^{-st}E_{m}(x,\theta,u+t)F_{-m}(x,\theta,u)dud\theta d\mu_{\varphi}(x)dt.
		\end{aligned}
	\end{equation}
	By the definition of $\Delta_{m}(t)$, we have $\Delta_{m}(t)=0$ when $t\ge|r|_{\infty}$, and therefore
	\begin{equation}\label{3.3}
		\int_{0}^{\infty}e^{-st}\Delta_{m}(t)dt=\int_{0}^{|r|_{\infty}}e^{-st}\Delta_{m}(t)dt.
	\end{equation}
	On the other hand, we have
	\begin{equation*}
		\begin{aligned}
			&\int_{0}^{\infty}\int_{X}\int_{\mathbb{T}^{d}}\int_{\max\{0,r(x)-t\}}^{r(x)}e^{-st}E_{m}(x,\theta,u+t)F_{-m}(x,\theta,u)dud\theta d\mu_{\varphi}(x)dt\\
			=&\int_{X}\int_{\mathbb{T}^{d}}\int_{0}^{r(x)}\int_{t\ge r(x)-u}e^{-st}E_{m}(x,\theta,u+t)F_{-m}(x,\theta,u)dtdud\theta d\mu_{\varphi}(x)\\
			=&\int_{X}\int_{\mathbb{T}^{d}}\int_{t^{\prime}\ge r(x)}e^{-st^{\prime}}E_{m}(x,\theta,t^{\prime})dt^{\prime}\int_{0}^{r(x)}e^{su}F_{-m}(x,\theta,u)dud\theta d\mu_{\varphi}(x)\\
			=&\int_{X}\int_{\mathbb{T}^{d}}\sum_{n=1}^{\infty}\bigg[\int_{r_{n}(x)}^{r_{n+1}(x)}e^{-st^{\prime}}E_{m}(x,\theta,t^{\prime})dt^{\prime}\bigg]\bigg[\int_{0}^{r(x)}e^{su}F_{-m}(x,\theta,u)du\bigg]d\theta d\mu_{\varphi}(x)\\
			=&\sum_{n=1}^{\infty}\int_{X}\int_{\mathbb{T}^{d}}e^{-sr_{n}}e^{im\Theta_{n}(x)}\bigg[\int_{0}^{r(\sigma^{n}(x))}e^{-su}e_{m}(\sigma^{n}(x),u)e^{im\theta}du\bigg]\bigg[\int_{0}^{r(x)}e^{su}f_{-m}(x,u)e^{-im\theta}du\bigg]d\theta d\mu_{\varphi}\\
			=&\sum_{n=1}^{\infty}\int_{X}e^{-sr_{n}}e^{im\Theta_{n}}e_{m,s}\circ\sigma^{n}f_{-m,-s}d\mu_{\varphi}\\
		\end{aligned}
	\end{equation*}
	Now we can substitute the above equality and \eqref{3.3} into \eqref{3.2} to complete the proof.
\end{proof}

Recalling that \(\mu_{\varphi}\) is the Gibbs measure of \(\varphi\) on \(X\) and \(\mu\) is the Gibbs measure of \(\varphi\) on \(X^{+}\), and we have \(\Pi_{+}^{*}\mu_{\varphi} = \mu\), where \(\Pi_{+}:X\to X^{+}\) is the coordinate projection.  It is a classical result that we can decompose the integral \(\int_{X} e^{-sr_{n}} e^{im\Theta_{n}} e_{m,s}\circ\sigma^{n}\cdot f_{-m,-s} \, d\mu_{\varphi}\) in Lemma \ref{Lemma 3.3} as a sum of integrals over the one-sided space \(X^{+}\) by expressing \(e_{m,s} = \sum_{p\in\mathbb{N}} e_{m,s,p}\) with \(e_{m,s,p}\in F_{\lambda}(X^{+})\) \cite{Dol98a}.\footnote{ There is a classical result which states that any function $h\in F_{\lambda}(X^{+})$ can be decomposed into $\sum_{p\in\mathbb{N}} h_{p}$ with $h_{p}$ belongs to $F_{\lambda}(X^{+})$ and enjoys some decay estimate. This step is used to handle the integration from the two-sided space to the one-sided space. The proof of this decomposition can be found in Appendix 1 in \cite{Dol98a}. For more details about using this decomposition , one can also refer to \cite[p5050]{Pol24}.} Thus, if necessary, we can make use of this decomposition and assume \(e_{m,s}\) and \(f_{-m,-s}\) belong to \(F_{\lambda}(X^{+})\). Furthermore, it is known \cite{Bow08} that \(\mu\) is the eigenmeasure of the transfer operator $\mathcal{L}_{\varphi}$ of \(\varphi\), defined by
$$\mathcal{L}_{\varphi}:F_{\lambda}(X^{+})\to F_{\lambda}(X^{+}),\quad\mathcal{L}_{\varphi}h(x) = \sum_{\sigma(y)=x} e^{\varphi(y)} h(y).$$
We can further assume \(\varphi\) is normalized, i.e., \(\mathcal{L}_{\varphi}1 = 1\). In particular, this implies \(P(\varphi) = 0\) and \(\mathcal{L}_{\varphi}^{*}\mu = \mu\), where \(P(\varphi)\) represents the pressure of \(\varphi\). Combining this with Lemma \ref{Lemma 3.3}, the above analysis yields the following.

\begin{Lemma}\label{Lemma 3.4}
	For any $s=a+ib$ with $a>0$, we have
	$$\widehat{\rho}_{E_{m},F_{-m}}(s)=\int_{0}^{|r|_{\infty}}e^{-st}\Delta_{m}(t)dt+\sum_{n=1}^{\infty}\int_{X^{+}}e_{m,s} \mathcal{L}_{\varphi-sr+im\Theta}^{n}f_{-m,-s}d\mu.$$
\end{Lemma}

Recall the constant $C_{1}>0$ in Lemma \ref{Lemma 3.1}. For each $b\in\mathbb{R}$ and $0\neq m\in\mathbb{Z}^{d}$, let \(b_{m}=|b|+C_{1}|m|\). Denote $\mathcal{L}_{b,m}:=\mathcal{L}_{\varphi-ibr+im\Theta}$. In the next section, we will utilize the condition obtained in Lemma \ref{Lemma 2.10} to prove the following Dolgopyat type estimate, which is a version of \cite[Proposition 7.3.8]{Pol24} adapted to the case of $\mathbb{T}^{d}$.

\begin{Proposition}\label{Dolgopyat type estimate}
There exist $C_{2} > 0$, $C_{3}>0$ and $C_{4} > 0$ such that for any $(b,m)$ with $m\not=0$ and any $h \in F_{\lambda}(X^{+})$, we have $||\mathcal{L}_{b,m}^{C_{2}\log b_{m}}h||_{b_{m}} \le (1 - b_{m}^{-C_{3}}) ||h||_{b_{m}}$, where $||h||_{b_{m}}=\max\{|h|_{\infty},|h|_{Lip}/C_{4}b_{m}\}$.
\end{Proposition}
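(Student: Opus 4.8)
The plan is to establish the contraction estimate in Proposition~\ref{Dolgopyat type estimate} by adapting Dolgopyat's original argument (as refined in \cite{Dol98b}, \cite{Pol24}) to the two-parameter family $\mathcal{L}_{b,m}$, where the oscillatory phase is the combined cocycle $br + m\Theta$ over the subshift. The guiding principle is that for $(b,m)$ with $m \neq 0$ the iterated transfer operator $\mathcal{L}_{b,m}^{n}$ cannot preserve the full $\sup$-norm because the rapidly oscillating phases attached to distinct preimage branches cannot all be simultaneously aligned; quantifying this alignment failure is exactly the content of the cancellation estimate Lemma~\ref{Lemma 3.4.4}, which we invoke as a black box here.

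First I would set up the standard Dolgopyat machinery: fix the parameter $b_m = |b| + C_1|m|_1$ as the effective frequency, and work with the scale-$b_m$ norm $\|h\|_{b_m} = \max\{|h|_\infty, |h|_{\mathrm{Lip}}/(C_4 b_m)\}$. The key structural objects are the cone of ``admissible'' functions $\mathcal{C}_{b_m}$ (positive functions $h$ with $|h|_{\mathrm{Lip}} \le C_4 b_m \inf h$, roughly), which is preserved by $\mathcal{L}_{0,0} = \mathcal{L}$ after a bounded number of iterates by the usual Lasota--Yorke / Ruelle-Perron-Frobenius estimates, and the associated family of Dolgopyat operators $\mathcal{N}_{J}$ built by damping $\mathcal{L}_{0,0}^{N_0}$ on a carefully chosen collection of cylinder sets $J$ where cancellation in $\mathcal{L}_{b,m}^{N_0}$ has been detected. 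One shows: (i) if $h \in F_\lambda(X^+)$ with $\|h\|_{b_m} \le 1$ and we write $\mathcal{L}_{b,m}^{N_0} h = \mathcal{L}^{N_0}(h e^{i\psi})$-style, then the modulus function $H = |h|$ (suitably regularized, e.g. $H$ majorizing $|\mathcal{L}_{b,m}^{N_0}h|$ pointwise) lies in the cone up to constants; (ii) the cancellation Lemma~\ref{Lemma 3.4.4} produces, on a definite fraction of phase space, a genuine gain, so that $\mathcal{N}_J H$ still dominates $|\mathcal{L}_{b,m}^{N_0} h|$ while having $L^2(\mu_\varphi)$-norm (or sup-norm) reduced by a factor $1 - \varepsilon_0$ for some $\varepsilon_0$ depending polynomially on $b_m^{-1}$; (iii) iterating $\lfloor C_2 \log b_m \rfloor / N_0$ times compounds these gains to beat the growth of the Lipschitz seminorm and yields $\|\mathcal{L}_{b,m}^{C_2 \log b_m} h\|_{b_m} \le (1 - b_m^{-C_3})\|h\|_{b_m}$, after choosing $C_2$ large enough relative to $N_0$ and the contraction rate, and $C_4$ small enough to make the cone invariance and the majorization step work.

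The new ingredient compared to \cite{Dol98b} is that the oscillation is driven by \emph{two} cocycles, and cancellation must be extracted using the inhomogeneous Diophantine hypothesis on the triple of periodic points furnished by Lemma~\ref{Lemma 2.1}. The mechanism: along the periodic orbits $x_1, x_2, x_3$ of common period $N'$, the combined phase increments of $\mathcal{L}_{b,m}^{N'}$ differ by quantities controlled by $b \cdot r_{N'}(x_j) + m \cdot \Theta_{N'}(x_j)$, and the inhomogeneous Diophantine bound $|q\alpha' + m\beta' - p| \ge C(|q| + |m|_1)^{-\gamma}$ guarantees that these phase differences, relative to $2\pi\mathbb{Z}$, are bounded below by $b_m^{-O(\gamma)}$ uniformly in $(b,m)$ with $m \neq 0$. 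This is precisely where one needs all three orbits: two of them would only control the $b$-cocycle (the homogeneous Diophantine condition on $\alpha'$, which handles $m = 0$), while the third ties in the holonomy cocycle $\Theta$ and the vector $\beta'$. Propagating this ``non-resonance at scale $b_m^{-O(\gamma)}$'' through the usual Dolgopyat doubling/non-integrability argument — using the local non-integrability condition (temporal distance function) of the hyperbolic flow together with the density of these periodic orbits along stable/unstable manifolds — is what Lemma~\ref{Lemma 3.4.4} encapsulates, and is the step I would expect to be the main obstacle: one must verify that the cancellation detected on the periodic orbits actually propagates to a positive-measure set of cylinders, with quantitative constants (the exponents $C_3$, and the power of $b_m$ in $\varepsilon_0$) tracked carefully so that the final $\log b_m$ iteration count suffices.

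In summary, modulo Lemma~\ref{Lemma 3.4.4}, the proof is the classical Dolgopyat contraction scheme: (1) fix norms and cones at scale $b_m$; (2) establish cone invariance and Lasota--Yorke bounds for $\mathcal{L} = \mathcal{L}_{0,0}$; (3) build Dolgopyat operators $\mathcal{N}_J$ damping on cylinders where Lemma~\ref{Lemma 3.4.4} gives cancellation; (4) show $\mathcal{N}_J H$ majorizes $|\mathcal{L}_{b,m}^{N_0} h|$ while $\|\mathcal{N}_J H\| \le (1 - c\, b_m^{-C_3'})\|H\|$; (5) iterate $O(\log b_m)$ times and absorb the Lipschitz growth to conclude. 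The constants $C_2, C_3, C_4$ are then determined in the obvious order: $C_4$ from cone invariance, $N_0$ and the per-step gain from Lemma~\ref{Lemma 3.4.4}, $C_3$ from the per-step gain, and $C_2$ from requiring enough iterates to dominate all error terms. I would present steps (1)--(5) in \S\ref{sec 3}, stating Lemma~\ref{Lemma 3.4.4} at the point where the per-step cancellation is needed and deferring its proof to \S\ref{sec 4}.
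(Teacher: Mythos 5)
Your proposal treats Lemma~\ref{Lemma 3.4.4} as a black box and then wraps it in the classical Dolgopyat machinery of damping operators $\mathcal{N}_J$: build auxiliary operators that cut down $\mathcal{L}_{0,0}^{N_0}$ on cylinders where Lemma~\ref{Lemma 3.4.4} detects cancellation, show that $\mathcal{N}_J H$ majorizes $|\mathcal{L}_{b,m}^{N_0}h|$ while its norm shrinks, then iterate and absorb the Lipschitz growth. That route is legitimate (it is closer to Dolgopyat's original paper and Stoyanov's work), but the paper proves Proposition~\ref{Dolgopyat type estimate} by a more elementary mechanism that avoids Dolgopyat operators entirely. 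The paper first splits into two cases by comparing $|h|_{Lip}$ against $2C_{4}b_{m}|h|_{\infty}$: in the Lipschitz-dominant case (Lemma~\ref{Lemma 3.4.3}) the Lasota--Yorke inequality alone already gives contraction of $\|\cdot\|_{b_m}$, with no cancellation needed. In the sup-dominant case, Lemma~\ref{Lemma 3.4.4} gives a pointwise gain on a set $U$ of measure $\ge b_m^{-C_7}$, which immediately yields an $L^1(\mu)$ contraction $\int|\mathcal{L}_{b,m}^{C_8\log b_m}h|\,d\mu \le (1-b_m^{-C_9-C_7})|h|_\infty$ (equation~\eqref{3.4.3}); this $L^1$ bound is then upgraded to a $|\cdot|_\infty$ contraction by hitting with $\mathcal{L}_\varphi^{C_{12}\log b_m}$ and invoking the spectral gap of $\mathcal{L}_\varphi$ (Lemma~\ref{Lemma 3.4.5}, Lemma~\ref{Lemma 3.4.6}). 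What your route buys is a more geometric picture of where the damping occurs; what the paper's route buys is the elimination of the cone/majorant bookkeeping entirely --- once you have cancellation on \emph{any} set of polynomially small measure, the Ruelle operator's spectral gap does the work of spreading the gain to all of $X^{+}$. One further caveat about your sketch: you describe the cancellation set as "a definite fraction of phase space," but in the paper $\mu(U)$ is only $\gtrsim b_m^{-C_7}$, i.e.\ polynomially small, which is exactly why the contraction factor is $1 - b_m^{-C_3}$ rather than a uniform $1-\varepsilon_0$; and your heuristic for Lemma~\ref{Lemma 3.4.4} via temporal-distance non-integrability and orbit density is not how the paper proves it --- the paper instead argues by contradiction directly from the inhomogeneous Diophantine condition, with no non-integrability input (see \S\ref{sec 4}). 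These are real divergences from the paper's argument, though your overall scheme would also succeed.
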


By a perturbation estimate, we derive the following result concerning $\mathcal{L}_{\varphi-sr+im\Theta}$.

\begin{Corollary}\label{Cor 3.6}
There exists $C_{5} >0$ such that for any $E, F\in F_{\lambda}^{k}(X)$ and any $m\in\mathbb{Z}^{d}$ with $m\not=0$, we have $\widehat{\rho}_{E_{m},F_{-m}}$ is analytic in the region of $s=a+ib$ with $|a|\le b_{m}^{-C_{5}}$ and satisfies  $|\widehat{\rho}_{E_{m},F_{-m}}(s)|\le C_{5}b_{m}^{C_{5}}|e_{m,s}|_{\infty}||f_{-m,-s}||_{b_{m}}$.
\end{Corollary}
\begin{proof}
The proof is almost identical to that of \cite[Corollary 7.3.9]{Pol24}. By Proposition \ref{Dolgopyat type estimate}, we have
\begin{equation}\label{3.3.3}
	||\mathcal{L}_{\varphi-sr+im\Theta}^{C_{2}\log b_{m}}h||_{b_{m}}=||\mathcal{L}_{b,m}^{C_{2}\log b_{m}}(e^{-ar_{C_{2}\log b_{m}}}h)||_{b_{m}}\le (1-b_{\pi}^{-C_{3}})||e^{-ar_{C_{2}\log b_{m}}}h||_{b_{m}}.
\end{equation}
Fix a uniform constant $C_{4}^{\prime}>0$. On the one hand, when $|a|\le b_{m}^{-C_{4}^{\prime}}$, 
\begin{equation}\label{3.3.4}
	|e^{-ar_{C_{2}\log b_{m}}}h|_{\infty} \le (1+2b_{m}^{-C_{4}^{\prime}+1})|h|_{\infty}.
\end{equation}
On the other hand, also provided $|a|\le b_{m}^{-C_{4}^{\prime}}$,
\begin{equation}\label{3.3.5}
	|e^{-ar_{C_{2}\log b_{m}}}h|_{\text{Lip}}\le (1+2b_{m}^{-C_{4}^{\prime}+1})(b_{m}^{-C_{4}+1+2\log\lambda^{-1}}|h|_{\infty}+|h|_{\text{Lip}}).
\end{equation}
Substituting \eqref{3.3.4} and \eqref{3.3.5} into \eqref{3.3.3} and choosing $C_{4}^{\prime}>0$ large, we obtain
\begin{equation*}
	||\mathcal{L}_{\varphi-sr+im\Theta}^{C_{2}\log b_{m}}h||_{b_{m}}\le(1-b_{m}^{-C_{3}})(1+2b_{m}^{-C_{4}^{\prime}+1}) (1+b_{\pi}^{-C_{4}^{\prime}+1+2\log\lambda^{-1}})||h||_{b_{m}}\le (1-b_{\pi}^{-4C_{4}^{\prime}})||h||_{b_{\pi}}.
\end{equation*}
The above estimate implies that $||\mathcal{L}_{\varphi-sr+im\Theta}(1-\mathcal{L}_{\varphi-sr+im\Theta})^{-1}h||_{b_{m}}\le C_{5}b_{m}^{C_{5}}||h||_{b_{m}}$ for any $h\in F_{\lambda}(X^{+})$ and some uniform constant $C_{5}>0$. Note that $C_{5}>C_{4}^{\prime}$. Then, the result follows from Lemma~\ref{Lemma 3.4}.
\end{proof}

\begin{proof}[\textbf{Proof of Theorems \ref{Theorem 1.2} and \ref{Theorem 1.3}}]
For any \(m \in \mathbb{Z}^{d}\) with \(m \neq 0\) and any \(s = a + ib\) with \(a > 0\), by integration by parts, we have
\begin{equation}\label{3.4}
	\widehat{\rho}_{E_{m},F_{-m}}(s) = \sum_{j=1}^{k_{1}} \rho_{\partial_{u}^{j-1}E_{m},F_{-m}}(0) s^{-j} + \widehat{\rho}_{\partial_{u}^{k_{1}}E_{m},F_{-m}}(s) s^{-k_{1}},
\end{equation}
for any \(k \geq k_{1} \in \mathbb{N}^{+}\), where \(\partial_{u}\) denotes differentiation with respect to the variable \(u\). Then, as in \cite[\S 7.1]{Pol24}, modifying the correlation function \(\rho_{E_{m},F_{-m}}\) if necessary, we can assume \(\rho_{\partial_{u}^{j-1}E_{m},F_{-m}}(0) = 0\) for any \(j \leq k_{1}\). Thus, by \eqref{3.4} we have \(\widehat{\rho}_{E_{m},F_{-m}}(s) = \widehat{\rho}_{\partial_{u}^{k_{1}}E_{m},F_{-m}}(s) s^{-k_{1}}\) for any \(s = a + ib\) with \(a > 0\). Then, by Corollary \ref{Cor 3.6}, we know that \(\widehat{\rho}_{E_{m},F_{-m}}(s) = \widehat{\rho}_{\partial_{u}^{k_{1}}E_{m},F_{-m}}(s) s^{-k_{1}}\) holds as well on the curve \(\Gamma:=\{s = a + ib : a = -b_{m}^{C_{5}},\ b \in \mathbb{R}\}\). Thus, for any \(n \in \mathbb{N}^{+}\), choosing \(k_{1} = C_{5}n + 2\) and by Corollary \ref{Cor 3.6}, similarly to proof of Theorem 6 in \cite[p5043]{Pol24}, an application of the inverse Laplace transform to the curve \(\Gamma\) yields the existence of \(C_{5}^{\prime} > 0\) such that
\begin{equation}\label{3.5}
	|\rho_{E_{m},F_{-m}}(t)| \leq C_{5}^{\prime} |m|^{C_{5}} t^{-n} |\partial_{u}^{k_{1}}e_{m,s}|_{\infty} ||f_{-m,-s}||_{b_{m}}.
\end{equation}
Now, using Lemma \ref{Lemma 3.1} and following the same argument as in \cite[\S 7.4]{Pol24}, for any \(E,F \in F_{\lambda}^{k}(X^{+})\) with \(k = k_{1} + k_{2}\) and \(k_{2} = C_{5} + d + 1\), we have
\[
\sum_{0 \neq m \in \mathbb{Z}^{d}} |\rho_{E_{m},F_{-m}}(t)| \leq 2C_{5}^{\prime} t^{-n} ||E||_{\lambda,k} ||F||_{\lambda,k}.
\]
For the case where \(m = 0\), we have that \(\rho_{E_{0},F_{0}}(t)\) exhibits rapid decay, which follows from the rapid mixing property of the underlying hyperbolic flow \(g_{t}\). Thus, by Lemma \ref{Lemma 3.2}, we have proved that \(\phi_{t}\) is rapidly mixing, and therefore \(f_{t}\) is rapidly mixing as well by Lemma~\ref{Lemma 2.9}, completing the proof of Theorem \ref{Theorem 1.2}.

Indeed, following the same approach as the proof of \cite[Theorem 4]{Pol24} in \cite[\S 8]{Pol24}, the same estimates on \(\mathcal{L}_{\varphi+sr+im\Theta}\) in Proposition \ref{Dolgopyat type estimate} and Corollary \ref{Cor 3.6} ensure the superpolynomial equidistribution of \(\phi_{t}\), and thus the superpolynomial equidistribution of \(f_{t}\) as well. This completes the proof of Theorem~\ref{Theorem 1.3}.
\end{proof}

\section{Proof of Proposition \ref{Dolgopyat type estimate}}\label{sec 4}

We follow the argument in \cite[\S 7.3]{Pol24} to prove Proposition \ref{Dolgopyat type estimate}. The main ingredient in the proof is a cancellation estimate stated in Lemma \ref{Lemma 3.4.4} which will be proved in the next section. 

Recalling, we choose some suitable $C_{1}>0$ such that  \(b_{m}=|b|+C_{1}|m|_{1}\) is large sufficiently for any \((b,m)\) with \(m\neq 0\). We begin with the following Lasota-Yorke inequality.

\begin{Lemma}\label{Lemma 3.4.1}
	There exists $C_{6}>0$ such that for any $(b,m)$ with $m\not=0$, any $h\in F_{\lambda}(X^{+})$ and any $n\ge\mathbb{N}^{+}$,
	$$|\mathcal{L}_{b,m}^{n}h|_{Lip}\le C_{6}b_{m}|h|_{\infty}+\lambda^{n}|h|_{Lip}.$$
\end{Lemma}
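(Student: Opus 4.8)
The plan is to establish the Lasota--Yorke inequality for the twisted transfer operator $\mathcal{L}_{b,m}$ by a direct computation of the Lipschitz seminorm of $\mathcal{L}_{b,m}^{n}h$, exploiting the contraction of the symbolic metric under the inverse branches of $\sigma$ together with the fact that $\varphi$ is normalized so that $\mathcal{L}_{0,0}1 = 1$. Since $\mathcal{L}_{b,m}^{n}h(x) = \sum_{\sigma^{n}(y)=x} e^{\varphi_{n}(y)} e^{i(b r_{n} + m\Theta_{n})(y)} h(y)$, where $\varphi_{n} = \sum_{i=0}^{n-1}\varphi\circ\sigma^{i}$ (and similarly $r_{n},\Theta_{n}$), the normalization gives the pointwise bound $|\mathcal{L}_{b,m}^{n}h|_{\infty}\le |h|_{\infty}$, which handles the trivial part. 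The work is in differencing $\mathcal{L}_{b,m}^{n}h(x) - \mathcal{L}_{b,m}^{n}h(x')$ for two points $x,x'$ in the same cylinder $[i_0]$, pairing up the preimages $y,y'$ that lie over $x,x'$ through the same cylinder of length $n$, and splitting the difference of summands into three telescoping pieces.

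First I would fix $x,x'\in X^{+}$ with $d_\lambda(x,x')\le 1$ lying in a common $1$-cylinder, and for each admissible word $\mathbf{w}$ of length $n$ write $y_{\mathbf w}, y'_{\mathbf w}$ for the corresponding preimages under $\sigma^n$; these satisfy $d_\lambda(y_{\mathbf w}, y'_{\mathbf w})\le \lambda^{n} d_\lambda(x,x')$ by the contraction property of $d_\lambda$ on inverse branches. Then I would estimate, term by term,
\begin{align*}
&\big| e^{\varphi_{n}(y_{\mathbf w})} e^{i(br_{n}+m\Theta_{n})(y_{\mathbf w})} h(y_{\mathbf w}) - e^{\varphi_{n}(y'_{\mathbf w})} e^{i(br_{n}+m\Theta_{n})(y'_{\mathbf w})} h(y'_{\mathbf w})\big| \\
&\qquad\le e^{\varphi_{n}(y_{\mathbf w})}\Big( \big|h(y_{\mathbf w}) - h(y'_{\mathbf w})\big| + |h|_{\infty}\big| e^{i(br_{n}+m\Theta_{n})(y_{\mathbf w}) - i(br_{n}+m\Theta_{n})(y'_{\mathbf w})} - 1\big| + |h|_{\infty}\big|e^{\varphi_{n}(y_{\mathbf w}) - \varphi_{n}(y'_{\mathbf w})} - 1\big|\Big),
\end{align*}
using that $e^{\varphi_n(y'_{\mathbf w})} \le e^{\varphi_n(y_{\mathbf w})} e^{|\varphi_n(y_{\mathbf w})-\varphi_n(y'_{\mathbf w})|}$ and $|\varphi_n(y_{\mathbf w})-\varphi_n(y'_{\mathbf w})|$ is bounded by a constant (bounded variation of Birkhoff sums under contraction). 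The first bracketed term contributes $\le \lambda^{n}|h|_{Lip} d_\lambda(x,x')$ after summing over $\mathbf w$ against $\sum_{\mathbf w} e^{\varphi_n(y_{\mathbf w})} \le 1$. The third term is bounded by $C\, \mathrm{var}_n(\varphi)\, \lambda^{n}\, d_\lambda(x,x')\,|h|_\infty$ using $|e^{u}-1|\le |u|e^{|u|}$ and the Lipschitz regularity of $\varphi$, so it goes into the $C_6 b_m |h|_\infty$ term (indeed into an $O(\lambda^n)|h|_\infty$ term). The key second term is controlled by
$$\big|(br_{n}+m\Theta_{n})(y_{\mathbf w}) - (br_{n}+m\Theta_{n})(y'_{\mathbf w})\big| \le (|b|\,|r|_{Lip} + |m|_1\,|\Theta|_{Lip})\sum_{j\ge 1}\lambda^{j} d_\lambda(x,x') \lesssim (|b| + |m|_1)\, d_\lambda(x,x'),$$
where crucially the geometric sum $\sum_{j\ge1}\lambda^j$ converges, so the telescoping over the $n$ coordinates does not produce a factor of $n$. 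Recalling $b_m = |b| + C_1|m|_1$, this is $\le C b_m\, d_\lambda(x,x')$, and summing against $\sum_{\mathbf w} e^{\varphi_n(y_{\mathbf w})}\le 1$ yields the $C_6 b_m |h|_\infty$ contribution. Collecting the three pieces gives $|\mathcal{L}_{b,m}^n h|_{Lip} \le C_6 b_m |h|_\infty + \lambda^n |h|_{Lip}$ with $C_6$ depending only on $\lambda$, $|r|_{Lip}$, $|\Theta|_{Lip}$, $|\varphi|_{Lip}$ and $C_1$.

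I expect the only genuinely delicate bookkeeping point to be making sure no factor of $n$ creeps in: the naive bound $|r_n(y)-r_n(y')| \le \sum_{i=0}^{n-1}|r\circ\sigma^i(y) - r\circ\sigma^i(y')|$ must be resummed using that $\sigma^i(y)$ and $\sigma^i(y')$ are $\lambda^{n-i}$-close, turning the sum into a convergent geometric series bounded independently of $n$; this is the standard ``bounded distortion'' mechanism and is the heart of why the right-hand side has $\lambda^n$ rather than $n\lambda^n$. Everything else — the pairing of preimages, the elementary inequalities $|e^u-1|\le |u|e^{|u|}$, and the normalization bound $\sum_{\sigma^n y = x} e^{\varphi_n(y)} = 1$ — is routine. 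One should also note the statement as written has the typo ``$n\ge\mathbb N^+$'' which is intended to read ``$n\in\mathbb N^+$''; the proof works for every positive integer $n$.
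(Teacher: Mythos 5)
Your proof is correct and uses the standard Lasota--Yorke mechanism (pairing inverse branches, telescoping the integrand into three pieces, resumming via the geometric series $\sum_{j\ge 1}\lambda^j$ so that no factor of $n$ appears), which is essentially the argument behind the cited Lemma 7.3.1 of Pollicott--Zhang that the paper defers to. One small inaccuracy: the weight-difference piece is controlled by $|\varphi|_{Lip}\bigl(\sum_{j\ge 1}\lambda^j\bigr)d_\lambda(x,x')\,|h|_\infty$, which is $O(1)\,d_\lambda(x,x')\,|h|_\infty$ and \emph{not} $O(\lambda^n)\,d_\lambda(x,x')\,|h|_\infty$ as your parenthetical claims --- bounded distortion keeps $|\varphi_n(y_{\mathbf w})-\varphi_n(y'_{\mathbf w})|$ uniformly bounded but does not make it decay in $n$; this is harmless because the paper arranges $b_m=|b|+C_1|m|_1$ to be uniformly large for $m\neq 0$, so the $O(1)|h|_\infty$ term is absorbed into $C_6 b_m|h|_\infty$ exactly as you then conclude. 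You should also note, for completeness, that when $x,x'$ lie in different $1$-cylinders one has $d_\lambda(x,x')=1$ and the bound follows trivially from $|\mathcal{L}_{b,m}^n h|_\infty\le|h|_\infty$, so restricting to a common $1$-cylinder is without loss of generality.
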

\begin{proof}
This is the version of \cite[Lemma 7.3.1]{Pol24} in the case of $\mathbb{T}^{d}$.
\end{proof}

Recalling, for any \((b,m)\) with \(m\neq 0\), we introduce a norm on $F_{\lambda}(X^{+})$ by \(||h||_{b_{m}}=\max\{|h|_{\infty},|h|_{\text{Lip}}/C_{4}b_{m}\}\). Fix a $\lambda^{\prime}\in(\lambda,1)$, and then we choose \(C_{4}\geq 1\) such that \(C_{6}/C_{4}+\lambda\le\lambda^{\prime}\). 

\begin{Corollary}\label{Cor 3.4.2}
For any \((b,m)\) with \(m\neq 0\), any \(n\ge\mathbb{N}^{+}\), and any \(h\in F_{\lambda}(X^{+})\), we have \(\frac{|\mathcal{L}^{n}_{b,m}h|_{\text{Lip}}}{C_{4}b_{m}}\le\lambda^{\prime}||h||_{b_{m}}\) and \(||\mathcal{L}^{n}_{b,m}h||_{b_{m}}\le||h||_{b_{m}}\).
\end{Corollary}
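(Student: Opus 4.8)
The plan is to derive Corollary \ref{Cor 3.4.2} directly from the Lasota--Yorke inequality of Lemma \ref{Lemma 3.4.1} together with the two calibrations of constants made just before the statement, namely the choice of $C_4 \geq 1$ with $C_6/C_4 + \lambda \leq \lambda'$ (where $\lambda' \in (\lambda,1)$ is fixed) and the fact that $b_m$ is taken large. First I would unwind the definition of the norm: for any $h \in F_\lambda(X^+)$ one has $|h|_\infty \leq \|h\|_{b_m}$ and $|h|_{\mathrm{Lip}} \leq C_4 b_m \|h\|_{b_m}$, simply because $\|h\|_{b_m}$ is the maximum of $|h|_\infty$ and $|h|_{\mathrm{Lip}}/(C_4 b_m)$. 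These are the only two facts about the norm that I will need.

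For the first inequality, I would apply Lemma \ref{Lemma 3.4.1} to bound $|\mathcal{L}_{b,m}^n h|_{\mathrm{Lip}} \leq C_6 b_m |h|_\infty + \lambda^n |h|_{\mathrm{Lip}}$, then divide through by $C_4 b_m$ to get
$$
\frac{|\mathcal{L}_{b,m}^n h|_{\mathrm{Lip}}}{C_4 b_m} \leq \frac{C_6}{C_4} |h|_\infty + \frac{\lambda^n}{C_4 b_m} |h|_{\mathrm{Lip}}.
$$
Now I substitute $|h|_\infty \leq \|h\|_{b_m}$ and $|h|_{\mathrm{Lip}}/(C_4 b_m) \leq \|h\|_{b_m}$, and use $\lambda^n \leq \lambda \leq \lambda'$ (valid for $n \geq 1$) to obtain the bound $(C_6/C_4 + \lambda)\|h\|_{b_m} \leq \lambda' \|h\|_{b_m}$ by the calibration of $C_4$. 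This gives $\tfrac{|\mathcal{L}_{b,m}^n h|_{\mathrm{Lip}}}{C_4 b_m} \leq \lambda' \|h\|_{b_m}$.

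For the second inequality, $\|\mathcal{L}_{b,m}^n h\|_{b_m} = \max\{|\mathcal{L}_{b,m}^n h|_\infty, |\mathcal{L}_{b,m}^n h|_{\mathrm{Lip}}/(C_4 b_m)\}$, so it suffices to bound each term by $\|h\|_{b_m}$. The Lipschitz term is controlled by the first inequality since $\lambda' < 1$. For the sup-norm term I would use the contraction property of the (normalized) transfer operator: because $\varphi$ is normalized so that $\mathcal{L}_{0,0} 1 = 1$, a pointwise triangle-inequality estimate gives $|\mathcal{L}_{b,m}^n h(x)| \leq \sum_{\sigma^n(y) = x} e^{\varphi_n(y)} |h(y)| \leq (\mathcal{L}_{0,0}^n |h|)(x) \leq |h|_\infty$, hence $|\mathcal{L}_{b,m}^n h|_\infty \leq |h|_\infty \leq \|h\|_{b_m}$. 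Combining the two gives $\|\mathcal{L}_{b,m}^n h\|_{b_m} \leq \|h\|_{b_m}$.

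This is entirely routine and there is no real obstacle; the only point requiring a moment's care is making sure the bookkeeping of constants matches the calibration made before the statement — in particular that $\lambda^n \leq \lambda$ uses $n \geq 1$ and that $C_4 \geq 1$ is not actually needed for this particular corollary (it will matter elsewhere) but does no harm. I would present the argument in just a few lines.
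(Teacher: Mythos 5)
Your proof is correct and follows essentially the same route as the paper: apply the Lasota--Yorke inequality from Lemma \ref{Lemma 3.4.1}, divide by $C_4 b_m$, substitute the two inequalities encoded in the definition of $\|\cdot\|_{b_m}$, and invoke the calibration $C_6/C_4 + \lambda \le \lambda'$ together with the sup-norm contraction $|\mathcal{L}_{b,m}^n h|_\infty \le |h|_\infty$. Your only addition is to spell out why the sup-norm contraction holds (via $\mathcal{L}_{0,0}1 = 1$ and the triangle inequality), which the paper simply asserts; this is a harmless elaboration of the same argument.
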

\begin{proof}
By Lemma \ref{Lemma 3.4.1}, for any  \((b,m)\) with \(m\neq 0\), any \(n\ge\mathbb{N}^{+}\), and any \(h\in F_{\lambda}(X^{+})\),
\begin{equation*}
		\dfrac{|\mathcal{L}^{n}_{b,m}h|_{\text{Lip}}}{C_{4}b_{m}}\le \dfrac{C_{6}}{C_{4}}|h|_{\infty}+\dfrac{\lambda^{n}}{C_{4}b_{m}}|h|_{\text{Lip}}.
	\end{equation*}
	In particular,
	\begin{equation*}
		\dfrac{|\mathcal{L}^{n}_{b,m}h|_{\text{Lip}}}{C_{4}b_{m}}\le\bigg(\dfrac{C_{6}}{C_{4}}+\lambda^{n}\bigg)||h||_{b_{m}}.
	\end{equation*}
	Note that \(|\mathcal{L}^{n}_{b,m}h|_{\infty}\le|h|_{\infty}\). Thus, provided \(\frac{C_{6}}{C_{4}}+\lambda\le\lambda^{\prime}\), we have \(||\mathcal{L}^{n}_{b,m}h||_{b_{m}}\le|h|_{b_{m}}\) which completes the proof.
\end{proof}

\begin{Lemma}\label{Lemma 3.4.3}
	For any  \((b,m)\) with \(m\neq 0\), any $h\in F_{\lambda}(X^{+})$ with $|h|_{Lip}\ge 2C_{4}b_{m}|h|_{\infty}$ and any $n\ge\mathbb{N}^{+}$, we have $||\mathcal{L}^{n}_{b,m}h||_{b_{m}}\le\lambda^{\prime}||h||_{b_{m}}$.
\end{Lemma}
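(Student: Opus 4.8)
The plan is to use the hypothesis $|h|_{Lip}\ge 2C_{4}b_{m}|h|_{\infty}$ to pin down which term realizes the norm $||h||_{b_{m}}$, and then feed this into Corollary \ref{Cor 3.4.2}. First I would observe that $C_{4}b_{m}|h|_{\infty}\le\tfrac12|h|_{Lip}\le|h|_{Lip}$, so $|h|_{\infty}\le|h|_{Lip}/(C_{4}b_{m})$ and therefore $||h||_{b_{m}}=|h|_{Lip}/(C_{4}b_{m})$; moreover the same inequality gives the sharper bound $|h|_{\infty}\le\tfrac12||h||_{b_{m}}$.

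Next, Corollary \ref{Cor 3.4.2} already supplies, for every $n\in\mathbb{N}^{+}$, the estimate $\frac{|\mathcal{L}^{n}_{b,m}h|_{Lip}}{C_{4}b_{m}}\le\lambda^{\prime}||h||_{b_{m}}$, which controls the Lipschitz part of the norm of $\mathcal{L}^{n}_{b,m}h$. For the sup part, I would combine the trivial contraction $|\mathcal{L}^{n}_{b,m}h|_{\infty}\le|h|_{\infty}$ (noted in the proof of Corollary \ref{Cor 3.4.2}, coming from $|e^{i(br+m\Theta)}|\equiv 1$ and $\mathcal{L}_{0,0}1=1$) with the bound $|h|_{\infty}\le\tfrac12||h||_{b_{m}}$ from the previous step, to get $|\mathcal{L}^{n}_{b,m}h|_{\infty}\le\tfrac12||h||_{b_{m}}\le\lambda^{\prime}||h||_{b_{m}}$, the last inequality holding once $\lambda^{\prime}\ge\tfrac12$. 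Taking the maximum of the two bounds then yields $||\mathcal{L}^{n}_{b,m}h||_{b_{m}}\le\lambda^{\prime}||h||_{b_{m}}$.

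The argument is entirely routine; if anything requires a word of care it is the constant comparison $\tfrac12\le\lambda^{\prime}$, but since $\lambda^{\prime}$ may be chosen freely in $(\lambda,1)$ we simply add the requirement $\lambda^{\prime}\ge\tfrac12$ to those already imposed before Corollary \ref{Cor 3.4.2} (if $\lambda<\tfrac12$ then $\tfrac12\in(\lambda,1)$, and if $\lambda\ge\tfrac12$ then $\lambda^{\prime}>\lambda\ge\tfrac12$ automatically), and then redefine $C_{4}$ accordingly via $C_{6}/C_{4}+\lambda\le\lambda^{\prime}$. There is thus no real obstacle in this lemma: it is the bookkeeping step that upgrades the Lasota–Yorke inequality (Lemma \ref{Lemma 3.4.1}) into a genuine contraction on the cone of functions whose Lipschitz seminorm dominates. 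The substance of the Dolgopyat mechanism — producing actual cancellation to handle the complementary regime where the sup-norm dominates — is what Lemma \ref{Lemma 3.4.4} is for, and is not invoked here.
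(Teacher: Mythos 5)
Your proof is correct and is essentially the paper's argument: exactly the same chain $|\mathcal{L}^{n}_{b,m}h|_{\infty}\le|h|_{\infty}\le|h|_{Lip}/(2C_{4}b_{m})\le\tfrac12\|h\|_{b_{m}}$ together with the Lipschitz bound from Corollary \ref{Cor 3.4.2}. The only cosmetic difference is that the paper secures the inequality $\tfrac12\le\lambda'$ by assuming $\lambda\ge\tfrac12$ (whence $\lambda'>\lambda\ge\tfrac12$), whereas you impose $\lambda'\ge\tfrac12$ directly on the choice of $\lambda'$; these are equivalent.
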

\begin{proof}
Since $\lambda>0$ is close to 1, we can assume $\lambda\ge 1/2$. We have $|\mathcal{L}^{n}_{b,m}h|_{\infty}\le|h|_{\infty}\le|h|_{Lip}/2C_{4}b_{m}\le2^{-1}||h||_{b_{m}}\le\lambda||h||_{b_{m}}$. Thus, by Corollary \ref{Cor 3.4.2}, we have $||\mathcal{L}^{n}_{b,m}h||_{b_{m}}\le\lambda^{\prime}||h||_{b_{m}}$.
\end{proof}

The condition obtained in Lemma \ref{Lemma 2.10} for the symbolic flow $\phi_{t}$ is enough to ensure we get the following cancellation of terms in a transfer operator.

\begin{Lemma}\label{Lemma 3.4.4}
There exist $C_{7}>0, C_{8}>0$ and $C_{9}>0$ such that for any  \((b,m)\) with \(m\neq 0\) and any $h\in F_{\lambda}(X^{+})$ with $|h|_{Lip}\le 2C_{4}b_{m}|h|_{\infty}$, there exists a subset $U\subset X^{+}$ with $\mu(U)\ge b_{m}^{-C_{7}}$ such that $|\mathcal{L}^{C_{8}\log b_{m}}_{b,m}h(x)|\le(1-b_{m}^{-C_{9}})|h|_{\infty}$ for any $x\in U$.
\end{Lemma}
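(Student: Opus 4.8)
The plan is to follow the Dolgopyat cancellation scheme in \cite[\S 7.3]{Pol24}, adapting it to the $\mathbb{T}^{d}$ setting by exploiting the inhomogeneous Diophantine property supplied by Lemma \ref{Lemma 2.1}. First I would fix the three periodic points $x_{1},x_{2},x_{3}$ of $\sigma$ with common period $N'$ given by Lemma \ref{Lemma 2.1}, so that the associated point $(\alpha',\beta')\in\mathbb{R}^{d+1}$ is inhomogeneously Diophantine. The role of this data is the following: in the iterated transfer operator $\mathcal{L}^{n}_{b,m}h(x)=\sum_{\sigma^{n}(y)=x}e^{\varphi_{n}(y)}e^{i(br_{n}+m\Theta_{n})(y)}h(y)$, the phases $br_{n}+m\Theta_{n}$ evaluated along distinct preimages that shadow concatenations of the three periodic orbits differ by quantities of the form $q\,b\,(r_{N'}(x_{i})-r_{N'}(x_{j}))+m\,(\Theta_{N'}(x_{i})-\Theta_{N'}(x_{j}))$ up to controlled Lipschitz errors; after normalising by the period difference $r_{N'}(x_{2})-r_{N'}(x_{3})$, the inhomogeneous Diophantine lower bound $|q\alpha'+m\beta'-p|\ge C(|q|+|m|_{1})^{-\gamma}$ forces the two phases to be separated modulo $2\pi$ by at least $b_{m}^{-O(\gamma)}$ once $q$ and $|m|_1$ are polynomially bounded in $b_m$ (the bound $b_m = |b| + C_1|m|_1$ is exactly what makes $|q|+|m|_1 \le b_m^{O(1)}$ along orbits of length $O(\log b_m)$).

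The main body of the argument is then the standard two-preimage cancellation. I would choose $n = C_{8}\log b_{m}$ large enough that the Markov structure guarantees, for a set of $x$ of measure bounded below by a negative power of $b_m$, the existence of (at least) two preimages $y, y'$ under $\sigma^{n}$ whose itineraries agree except on a block where one follows a power of the $\tau_2$-orbit and the other the corresponding power of the $\tau_3$-orbit (interpolated by fixed connecting words), chosen so that $r_n(y) - r_n(y')$ and $\Theta_n(y)-\Theta_n(y')$ realise the required linear combination of period and holonomy differences. Using $|h|_{Lip}\le 2C_{4}b_{m}|h|_{\infty}$ to control the oscillation of $h$ on the relevant cylinders (so that $h(y)$ and $h(y')$ are each within, say, a tenth of $|h|_\infty$ of a common value, after shrinking cylinders to depth $O(\log b_m)$), and the phase separation above together with the triangle inequality, one gets that the contribution of the pair $\{y,y'\}$ to $\mathcal{L}^{n}_{b,m}h(x)$ has modulus strictly smaller — by a factor $1 - b_m^{-C_9}$ — than the sum of moduli; the remaining preimages contribute at most their trivial bound, and since $\mathcal{L}^{n}_{0,0}1 = 1$ the total is $\le (1 - b_m^{-C_9})|h|_\infty$ on that set $U$. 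The measure bound $\mu(U)\ge b_m^{-C_7}$ comes from the Gibbs property: $U$ is a union of cylinders of depth $O(\log b_m)$, each of $\mu$-measure at least $e^{-O(\log b_m)} = b_m^{-O(1)}$.

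The step I expect to be the main obstacle is the bookkeeping that converts the \emph{abstract} inhomogeneous Diophantine inequality for the single vector $(\alpha',\beta')$ into a \emph{uniform} phase-separation estimate valid simultaneously for all $(b,m)$ with $m\neq 0$: one must track how the integers $q$ (the number of times one inserts the periodic block) and the components of $m$ enter, verify they stay within the polynomial-in-$b_m$ window over which the orbit length $n=O(\log b_m)$ ranges, and ensure the Lipschitz/distortion errors in replacing $r_n,\Theta_n$ along a shadowing orbit by the exact periodic sums $q\,r_{N'}(x_i),\,q\,\Theta_{N'}(x_i)$ are dominated by the $b_m^{-O(\gamma)}$ gap. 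Controlling the case $b$ small relative to $|m|_1$ (where the separation must come primarily from the $m\beta'$ term) versus $b$ large also requires care, but since $b_m$ packages both, the Diophantine inequality in the form $|q\alpha'+m\beta'-p|\ge C(|q|+|m|_1)^{-\gamma}$ handles both regimes at once; the bulk of the work is making all constants explicit powers of $b_m$. The actual counting of preimages with prescribed itineraries, the Gibbs measure estimate, and the final triangle-inequality step are routine given \ref{Appendix A} and the structure already set up in \cite{Pol24}.
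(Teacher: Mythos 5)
The paper proves the lemma by contradiction rather than by a direct construction. Assuming the cancellation fails for every $x$ and every $k\in\{0,1,2\}$ (Lemma \ref{Lemma 4.3.2}), it introduces phase functions $\theta_{h,0},\theta_{h,1},\theta_{h,2}$ and derives approximate phase-coherence identities (Lemmas \ref{Lemma 4.3.3}--\ref{Lemma 4.3.5}) showing that $(br+m\Theta)_{n_{b_m}}$ is, up to an error of size $b_m^{-O(1)}$, a coboundary of $\theta_{h,1}$ plus a constant $c$. Summing this approximate cohomological identity over one period of each of the three periodic orbits kills the coboundary and yields two independent constraints \eqref{4.3.7}, \eqref{4.3.8}; eliminating $b$ between them produces \eqref{4.9}, contradicting the inhomogeneous Diophantine inequality for $(\alpha',\beta')$. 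Lemma \ref{Lemma 4.3.1} upgrades the resulting single-point estimate to $\mathcal{L}_{b,m}^{C_8\log b_m}$, and the Gibbs property furnishes the set $U$ with $\mu(U)\ge b_m^{-C_7}$.

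Your direct two-preimage construction is, in spirit, the contrapositive of the same argument, but as written it has a genuine gap. You propose to compare a pair $y,y'$ whose itineraries differ by replacing a block shadowing a power of $\tau_2$ with one shadowing the corresponding power of $\tau_3$. The resulting phase difference is approximately $q[b(\ell_2-\ell_3)+m(\theta_2-\theta_3)]$, which involves only $\tau_2$ and $\tau_3$. But the hypothesis is a Diophantine condition on $(\alpha',\beta')$ with $\alpha'=(\ell_1-\ell_2)/(\ell_2-\ell_3)$, which by its very form requires seeing all three orbits. For each fixed $m\neq 0$ there is, for every $p\in\mathbb Z$, a real value of $b$ with $b(\ell_2-\ell_3)+m(\theta_2-\theta_3)\in 2\pi\mathbb Z$ exactly; for such $(b,m)$ a single $\tau_2\leftrightarrow\tau_3$ swap gives no separation at all, and no Diophantine hypothesis on $(\alpha',\beta')$ can repair this. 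The phrase ``normalising by the period difference $r_{N'}(x_2)-r_{N'}(x_3)$'' does not fix the problem either: one cannot divide a congruence modulo $2\pi\mathbb Z$ by a real number and obtain another congruence. What your argument needs is two pairs of preimages, one realising the $\tau_1\leftrightarrow\tau_2$ swap and one the $\tau_2\leftrightarrow\tau_3$ swap, followed by a dichotomy: if both resulting phase differences were near $2\pi\mathbb Z$, eliminating $b$ between the two approximate congruences would violate $|q\alpha'+m\beta'-p|\ge C(|q|+|m|_1)^{-\gamma}$, so at least one pair must have phases separated by $b_m^{-O(\gamma)}$. This elimination step is exactly what the paper's contradiction proof performs automatically via \eqref{4.3.7}--\eqref{4.9}; your bookkeeping of $q=O(\log b_m)$, the Lipschitz error control, and the Gibbs measure estimate are otherwise consistent with what is needed, but without the two-pair dichotomy the argument does not close.
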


The above lemma will be proved in the next section. As a corollary of Lemma \ref{Lemma 3.4.4}, we can obtain a cancellation of the oscillatory integral $\int_{X^{+}}|\mathcal{L}_{b,m}^{C_{8}\log b_{m}}h|d\mu$ as follows:
\begin{equation}\label{3.4.3}
	\begin{aligned}
		\int_{X^{+}}|\mathcal{L}_{b,m}^{C_{8}\log b_{m}}h|d\mu=&\int_{U}|\mathcal{L}_{b,m}^{C_{8}\log b_{m}}h|d\mu+\int_{X^{+}-U}|\mathcal{L}_{b,m}^{C_{8}\log b_{m}}h|d\mu\\
		\le&(1-b_{m}^{-C_{9}})|h|_{\infty}\mu(U)+|h|_{\infty}\mu(X^{+}-U)\\
		=&(1-b_{m}^{-C_{9}}\mu(U))|h|_{\infty}\le(1-b_{m}^{-C_{9}-C_{7}})|h|_{\infty}.
	\end{aligned}
\end{equation}
To strengthen the above $L^{1}$ contraction to a $|\cdot|_{\infty}$-contraction, we require the following lemma. 

\begin{Lemma}\label{Lemma 3.4.5}
	There exist $C_{10}>0$ and $\delta\in(0,1)$ such that for any $h\in F_{\lambda}(X^{+})$ and any $k\ge\mathbb{N}^{+}$,
	$$||\mathcal{L}_{\varphi}^{k}h||_{Lip}\le \int_{X^{+}}|h|d\mu+C_{10}\delta^{k}||h||_{Lip}.$$
\end{Lemma}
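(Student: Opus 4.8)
The plan is to deduce Lemma \ref{Lemma 3.4.5} from the classical Ruelle--Perron--Frobenius spectral gap, since the operator $\mathcal{L}_{\varphi}$ here is just the usual normalized Ruelle transfer operator of the Gibbs measure $\mu=\mu_{\varphi}$ on the one-sided subshift and carries no dependence on the $\mathbb{T}^{d}$-extension (so the argument is the classical one; compare the corresponding step in \cite{Pol24}). Throughout I write $\|\cdot\|_{Lip}$ for the norm of the Banach space $F_{\lambda}(X^{+})$, so that $\|h\|_{Lip}=|h|_{\infty}+|h|_{Lip}$.

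First I would record the classical Lasota--Yorke inequality for $\mathcal{L}_{\varphi}$ (the $m=0$ analogue of Lemma \ref{Lemma 3.4.1}): bounded distortion for $\varphi\in F_{\lambda}(X^{+})$ yields constants $A>0$ and $\theta\in(0,1)$ with $|\mathcal{L}_{\varphi}^{n}h|_{Lip}\le A\theta^{n}|h|_{Lip}+A|h|_{\infty}$, while positivity together with $\mathcal{L}_{\varphi}\mathbf{1}=\mathbf{1}$ gives $|\mathcal{L}_{\varphi}^{n}h|_{\infty}\le|h|_{\infty}$. Since the inclusion $F_{\lambda}(X^{+})\hookrightarrow C(X^{+})$ is compact (Arzel\`a--Ascoli), the Ionescu Tulcea--Marinescu / Hennion theorem gives that $\mathcal{L}_{\varphi}$ is quasi-compact on $F_{\lambda}(X^{+})$ with essential spectral radius at most $\theta$; and since the subshift is topologically mixing and $\varphi$ is normalized, $1$ is a simple eigenvalue (eigenfunction $\mathbf{1}$) and is the only spectral value of modulus $1$.

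I would then use the resulting spectral decomposition $\mathcal{L}_{\varphi}=\mathcal{P}+\mathcal{N}$, with $\mathcal{P}$ the rank-one spectral projection onto $\mathbb{C}\mathbf{1}$, $\mathcal{P}\mathcal{N}=\mathcal{N}\mathcal{P}=0$, and $\mathrm{spr}\big(\mathcal{N}|_{F_{\lambda}}\big)=:\delta<1$. Because $\mu_{\varphi}$ is the Ruelle--Perron--Frobenius eigenmeasure one has $\mathcal{P}h=\big(\int_{X^{+}}h\,d\mu_{\varphi}\big)\mathbf{1}$, and by the spectral radius formula (after slightly enlarging $\delta$ below $1$ if needed) there is $C_{10}>0$ with $\|\mathcal{N}^{k}\|_{F_{\lambda}\to F_{\lambda}}\le C_{10}\delta^{k}$ for all $k\ge 1$. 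Hence $\mathcal{L}_{\varphi}^{k}h=\big(\int h\,d\mu_{\varphi}\big)\mathbf{1}+\mathcal{N}^{k}h$, and using $\|\mathbf{1}\|_{Lip}=1$ and $\big|\int h\,d\mu_{\varphi}\big|\le\int_{X^{+}}|h|\,d\mu_{\varphi}$ one concludes
$$
\|\mathcal{L}_{\varphi}^{k}h\|_{Lip}\le\Big|\int_{X^{+}}h\,d\mu_{\varphi}\Big|+\|\mathcal{N}^{k}h\|_{Lip}\le\int_{X^{+}}|h|\,d\mu_{\varphi}+C_{10}\delta^{k}\|h\|_{Lip}.
$$

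I do not expect a genuine obstacle: the only non-elementary input is the RPF spectral gap, which is completely standard for a Lipschitz potential on a mixing subshift of finite type (and could alternatively be obtained by a Birkhoff-cone contraction argument in the style of Liverani, bypassing quasi-compactness). The one bookkeeping point to keep straight is that the decay of $\mathcal{N}^{k}$ must be measured in the $F_{\lambda}(X^{+})$-norm so that it controls the $|\cdot|_{\infty}$ and $|\cdot|_{Lip}$ parts of $\|\mathcal{L}_{\varphi}^{k}h\|_{Lip}$ simultaneously; this is automatic since $\mathcal{N}$ is a bounded operator on $F_{\lambda}(X^{+})$.
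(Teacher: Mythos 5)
Your proposal is correct and takes exactly the approach the paper intends: the paper's proof of Lemma~\ref{Lemma 3.4.5} is simply the one-line observation that this is an immediate consequence of the spectral gap of $\mathcal{L}_{\varphi}$ on $F_{\lambda}(X^{+})$ (citing \cite{Bal00}), and your write-up just spells out the standard Ruelle--Perron--Frobenius decomposition $\mathcal{L}_{\varphi}^{k}h=\big(\int h\,d\mu\big)\mathbf{1}+\mathcal{N}^{k}h$ with $\|\mathcal{N}^{k}\|\le C_{10}\delta^{k}$ that underlies it.
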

\begin{proof}
	This is a directly corollary of the spectral gap of $\mathcal{L}_{\varphi}$ acts on $F_{\lambda}(X^{+})$ \cite{Bal00}.
\end{proof}

\begin{Lemma}\label{Lemma 3.4.6}
	There exist $C_{11}>0$ and $ C_{12}>0$ such that for any $(b, m)$ with $m\not=0$ and any $h\in F_{\lambda}(X^{+})$ with $|h|_{Lip}\le 2C_{4}b_{m}|h|_{\infty}$,
	$$|\mathcal{L}_{b,m}^{(C_{12}+C_{8})\log b_{m}}h|_{\infty}\le\bigg(1-b_{m}^{-C_{11}}\bigg)|h|_{\infty}.$$
\end{Lemma}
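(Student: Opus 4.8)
The goal is to upgrade the $L^1$-contraction of $\mathcal L_{b,m}^{C_8\log b_m}$ obtained in equation \eqref{3.4.3} to a contraction in the $|\cdot|_\infty$-norm, at the cost of a slightly larger iterate count $(C_{12}+C_8)\log b_m$ and a slightly weaker (still polynomial in $b_m^{-1}$) contraction factor. The natural mechanism is to exploit the spectral gap of the unperturbed operator $\mathcal L_\varphi$ (Lemma \ref{Lemma 3.4.5}): iterating a further $C_{12}\log b_m$ times smooths out the function, and since $|\mathcal L_{b,m} g|\le \mathcal L_\varphi|g|$ pointwise (because $\varphi$ is real and the extra factor $e^{i(br+m\Theta)}$ has modulus one), any $L^1$-smallness of $|\mathcal L_{b,m}^{C_8\log b_m}h|$ propagates, through $\mathcal L_\varphi$, into $|\cdot|_\infty$-smallness.

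\medskip
\noindent\textbf{Key steps.}
First, I set $k = C_{12}\log b_m$ and $n = C_8\log b_m$, and write $g := \mathcal L_{b,m}^{n}h$. By the Lasota–Yorke estimate (Lemma \ref{Lemma 3.4.1}) together with the hypothesis $|h|_{\mathrm{Lip}}\le 2C_4 b_m|h|_\infty$, we have $|g|_{\mathrm{Lip}}\le C_6 b_m|h|_\infty + \lambda^{n}\cdot 2C_4 b_m|h|_\infty \le C' b_m|h|_\infty$ for a uniform constant $C'$; also $|g|_\infty\le|h|_\infty$. Second, the pointwise domination $|\mathcal L_{b,m}\psi|\le\mathcal L_\varphi|\psi|$ iterated $k$ times gives $|\mathcal L_{b,m}^{\,n+k}h| = |\mathcal L_{b,m}^{k}g|\le \mathcal L_\varphi^{k}|g|$ pointwise, hence $|\mathcal L_{b,m}^{\,n+k}h|_\infty\le |\mathcal L_\varphi^{k}|g|\,|_\infty \le \||\mathcal L_\varphi^{k}|g|\|_{\mathrm{Lip}}$. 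Third, apply Lemma \ref{Lemma 3.4.5} to the nonnegative function $|g|\in F_\lambda(X^+)$: note $\||g|\|_{\mathrm{Lip}} = |g|_\infty + |g|_{\mathrm{Lip}}\le |h|_\infty(1 + C'b_m)\le C'' b_m|h|_\infty$, so
\begin{equation*}
\|\mathcal L_\varphi^{k}|g|\|_{\mathrm{Lip}}\le \int_{X^+}|g|\,d\mu + C_{10}\delta^{k}\cdot C'' b_m|h|_\infty.
\end{equation*}
Fourth, bound the integral using \eqref{3.4.3}, $\int_{X^+}|g|\,d\mu = \int_{X^+}|\mathcal L_{b,m}^{n}h|\,d\mu\le(1-b_m^{-C_9-C_7})|h|_\infty$. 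Finally choose $C_{12}$ large enough that $C_{10}C''\delta^{C_{12}\log b_m}b_m = C_{10}C'' b_m^{1+C_{12}\log\delta}\le \tfrac12 b_m^{-C_9-C_7}$ for all admissible $b_m$ (possible since $\log\delta<0$, so the exponent $1+C_{12}\log\delta\to-\infty$); combining, $|\mathcal L_{b,m}^{\,n+k}h|_\infty\le(1-\tfrac12 b_m^{-C_9-C_7})|h|_\infty$, which is the claim with $C_{11} := C_9+C_7+1$ (the extra $+1$ absorbing the factor $\tfrac12$ for $b_m$ large).

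\medskip
\noindent\textbf{Main obstacle.}
There is no deep obstacle here — the result is the standard final bootstrap in a Dolgopyat-type argument — but the point requiring care is the bookkeeping of constants so that the final exponent stays a \emph{fixed} negative power of $b_m$, uniformly over all $(b,m)$ with $m\ne 0$; this hinges on $b_m$ being bounded below by a large constant (which is arranged by the choice of $C_1$), so that $b_m^{1+C_{12}\log\delta}$ is genuinely dominated by $b_m^{-C_9-C_7}$ once $C_{12}$ is chosen. One should also double-check that Lemma \ref{Lemma 3.4.5} is being applied to a function in $F_\lambda(X^+)$ — $|g|$ is Lipschitz because $g$ is and taking absolute values does not increase the Lipschitz constant — and that the pointwise domination $|\mathcal L_{b,m}\psi|\le\mathcal L_\varphi|\psi|$ is legitimate, which it is since $\mathcal L_{b,m}\psi(x) = \sum_{\sigma(y)=x} e^{\varphi(y)}e^{i(br+m\Theta)(y)}\psi(y)$ and $e^{\varphi(y)}>0$ while $|e^{i(br+m\Theta)(y)}|=1$.
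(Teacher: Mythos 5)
Your proof is correct and follows essentially the same argument as the paper: pointwise domination $|\mathcal L_{b,m}^{k}g|\le\mathcal L_\varphi^{k}|g|$, the spectral-gap bound of Lemma \ref{Lemma 3.4.5}, the $L^1$-contraction \eqref{3.4.3}, and a choice of $C_{12}$ large enough to absorb the remaining Lipschitz error term. The only cosmetic difference is that you rederive the Lipschitz bound on $\mathcal L_{b,m}^{C_8\log b_m}h$ directly from Lemma \ref{Lemma 3.4.1} rather than citing Corollary \ref{Cor 3.4.2}.
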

\begin{proof}
	By \eqref{3.4.3}, Corollary \ref{Cor 3.4.2} and Lemma \ref{Lemma 3.4.5}, we have
	$$
	\begin{aligned}
		&|\mathcal{L}_{b,m}^{(C_{12}+C_{8})\log b_{m}}h|_{\infty}\le|\mathcal{L}_{\varphi}^{C_{12}\log b_{m}}|\mathcal{L}_{b,m}^{C_{8}\log b_{m}}h||_{\infty}\\
		\le&\int_{X^{+}}|\mathcal{L}_{b,m}^{C_{8}\log b_{m}}h|d\mu+C_{10}\delta^{C_{12}\log b_{m}}||\mathcal{L}_{b,m}^{C_{8}\log b_{m}}h||_{Lip}\\
		\le&(1-b_{m}^{-C_{9}-C_{7}})|h|_{\infty}+2C_{10}\delta^{C_{12}\log b_{m}}C_{4}b_{m}|h|_{\infty}\\
		\le&(1-b_{m}^{-C_{11}})|h|_{\infty}
	\end{aligned}
	$$
	which completes the proof.
\end{proof}

The above $|\cdot|_{\infty}$-contraction implies the following $||\cdot||_{b_{m}}$-contraction.

\begin{Corollary}\label{Cor 3.4.7}
	For any $(b, m)$ with $m\not=0$ and any $h\in F_{\lambda}(X^{+})$ with $|h|_{Lip}\le 2C_{4}b_{m}|h|_{\infty}$,
	$$||\mathcal{L}_{b,m}^{(C_{12}+C_{8})\log b_{m}}h||_{b_{m}}\le\bigg(1-b_{m}^{-C_{11}}\bigg)||h||_{b_{m}}.$$
\end{Corollary}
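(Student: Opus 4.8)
The goal is to upgrade the $|\cdot|_\infty$-contraction of Lemma \ref{Lemma 3.4.6} to a $||\cdot||_{b_m}$-contraction under the same hypothesis $|h|_{\mathrm{Lip}}\le 2C_4 b_m|h|_\infty$. The plan is to write $n_0:=(C_{12}+C_8)\log b_m$ and split the iterate as $\mathcal{L}_{b,m}^{n_0}=\mathcal{L}_{b,m}^{n_0-n_1}\circ\mathcal{L}_{b,m}^{n_1}$ for a suitable $n_1$, applying the $|\cdot|_\infty$-bound to control the sup-norm part and the Lasota--Yorke/Lipschitz contraction (Corollary \ref{Cor 3.4.2}) to control the Lipschitz part, then take the max. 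Concretely, I would argue as follows.

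First, for the sup-norm component: by Lemma \ref{Lemma 3.4.6} directly, $|\mathcal{L}_{b,m}^{n_0}h|_\infty\le(1-b_m^{-C_{11}})|h|_\infty\le(1-b_m^{-C_{11}})||h||_{b_m}$, since $|h|_\infty\le||h||_{b_m}$ by definition of the norm. Second, for the Lipschitz component: Corollary \ref{Cor 3.4.2} gives $\dfrac{|\mathcal{L}_{b,m}^{n_0}h|_{\mathrm{Lip}}}{C_4 b_m}\le\lambda'||h||_{b_m}$ for every $n_0\ge 1$, with $\lambda'\in(\lambda,1)<1$. Taking the maximum of these two bounds yields $||\mathcal{L}_{b,m}^{n_0}h||_{b_m}\le\max\{1-b_m^{-C_{11}},\lambda'\}||h||_{b_m}$. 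Since $1-b_m^{-C_{11}}\to 1$ while $\lambda'$ is a fixed constant less than $1$, for $b_m$ large enough (which we may assume by choosing $C_1$ large, as already stipulated) the maximum equals $1-b_m^{-C_{11}}$, giving exactly the claimed inequality. This is the whole argument — it is short because the two hard inputs (the oscillatory cancellation in Lemma \ref{Lemma 3.4.4} and the smoothing estimate Lemma \ref{Lemma 3.4.6}) have already been established.

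There is essentially no obstacle here; the only point requiring a word of care is the comparison $\max\{1-b_m^{-C_{11}},\lambda'\}=1-b_m^{-C_{11}}$, which holds precisely because $b_m$ is assumed sufficiently large and $\lambda'$ is a fixed constant bounded away from $1$. If one wanted to avoid appealing to "$b_m$ large", one could instead absorb $\lambda'$ into the statement by noting that on the relevant range of $(b,m)$ one has $b_m^{-C_{11}}\le 1-\lambda'$, or simply enlarge $C_{11}$; either way the conclusion is unaffected. No splitting of the iterate is actually needed — both the sup-norm bound and the Lipschitz bound are available for the full iterate $n_0$ — so the proof reduces to taking a maximum.

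\begin{proof}
Write $n_{0}:=(C_{12}+C_{8})\log b_{m}$. By Lemma \ref{Lemma 3.4.6} and the fact that $|h|_{\infty}\le||h||_{b_{m}}$,
$$|\mathcal{L}_{b,m}^{n_{0}}h|_{\infty}\le\big(1-b_{m}^{-C_{11}}\big)|h|_{\infty}\le\big(1-b_{m}^{-C_{11}}\big)||h||_{b_{m}}.$$
On the other hand, by Corollary \ref{Cor 3.4.2},
$$\dfrac{|\mathcal{L}_{b,m}^{n_{0}}h|_{Lip}}{C_{4}b_{m}}\le\lambda^{\prime}||h||_{b_{m}}.$$
Since $b_{m}$ is sufficiently large for any $(b,m)$ with $m\neq0$ and $\lambda^{\prime}\in(\lambda,1)$ is a fixed constant, we may assume $1-b_{m}^{-C_{11}}\ge\lambda^{\prime}$. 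Taking the maximum of the two displayed bounds gives
$$||\mathcal{L}_{b,m}^{(C_{12}+C_{8})\log b_{m}}h||_{b_{m}}=\max\bigg\{|\mathcal{L}_{b,m}^{n_{0}}h|_{\infty},\dfrac{|\mathcal{L}_{b,m}^{n_{0}}h|_{Lip}}{C_{4}b_{m}}\bigg\}\le\big(1-b_{m}^{-C_{11}}\big)||h||_{b_{m}},$$
which completes the proof.
\end{proof}
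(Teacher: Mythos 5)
Your proof is correct and uses exactly the two ingredients the paper cites — Corollary \ref{Cor 3.4.2} for the Lipschitz part and Lemma \ref{Lemma 3.4.6} for the sup-norm part — combined by taking the maximum and observing $\lambda'\le 1-b_m^{-C_{11}}$ for $b_m$ large. This is the same argument the paper intends, just written out in full.
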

\begin{proof}
	This comes from Corollary \ref{Cor 3.4.2} and Lemma \ref{Lemma 3.4.6}.
\end{proof}

\begin{proof}[\textbf{Proof of Proposition \ref{Dolgopyat type estimate}}]
	This comes from Lemma \ref{Lemma 3.4.3} and Corollary \ref{Cor 3.4.7}.
\end{proof}

\section{Proof of Lemma \ref{Lemma 3.4.4}}\label{sec 5}

In this section, we prove Lemma \ref{Lemma 3.4.4}. For any $(b,m)$ with $0 \neq m \in \mathbb{Z}^{d}$, let $n_{b_{m}} = 4^{-1}C_{8}\log b_{m}$. We begin with the following lemma.

\begin{Lemma}\label{Lemma 4.3.1}
	For any $h\in F_{\lambda}(X^{+})$ with $|h|_{Lip}\le2C_{4}b_{m}|h|_{\infty}$, if there exist $k\in\{0,1,2\}$ and $x\in X^{+}$ such that $|\mathcal{L}_{b,m}^{kn_{b_{m}}}h(x)|\le(1-b_{m}^{-C_{9}+C_{8}|\varphi|_{\infty}})|h|_{\infty}$, then $|\mathcal{L}_{b,m}^{C_{8}\log b_{m}}h(\sigma^{(4-k)n_{b_{m}}}(x))| \le(1-b_{m}^{-C_{9}})|h|_{\infty}$
\end{Lemma}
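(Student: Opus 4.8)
The plan is to use the semigroup property of $\mathcal{L}_{b,m}$ together with the observation that $x$ is itself one of the $\sigma^{(4-k)n_{b_m}}$-preimages of $\sigma^{(4-k)n_{b_m}}(x)$, so that the assumed deficit of $\mathcal{L}_{b,m}^{kn_{b_m}}h$ at $x$ is inherited — with a controlled loss — by $\mathcal{L}_{b,m}^{C_8\log b_m}h$ at $\sigma^{(4-k)n_{b_m}}(x)$. Write $N:=(4-k)n_{b_m}$, $z:=\sigma^N(x)$, and $g:=\mathcal{L}_{b,m}^{kn_{b_m}}h$. Since $4n_{b_m}=C_8\log b_m$ (up to taking integer parts), we have $C_8\log b_m=N+kn_{b_m}$, hence $\mathcal{L}_{b,m}^{C_8\log b_m}h=\mathcal{L}_{b,m}^{N}g$; and because $k\in\{0,1,2\}$ the exponent $N$ lies between $2n_{b_m}$ and $4n_{b_m}$, in particular $N\ge 1$ (for $b_m$ large), so $x$ is one of the finitely many points $y$ with $\sigma^N(y)=z$.

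The first step is to expand
\[
\mathcal{L}_{b,m}^{N}g(z)=\sum_{\sigma^N(y)=z}e^{\varphi_N(y)}e^{i(br_N+m\Theta_N)(y)}g(y),\qquad \varphi_N:=\sum_{i=0}^{N-1}\varphi\circ\sigma^{i},
\]
isolate the summand $y=x$, and apply the triangle inequality together with the two bounds $|g(y)|\le|g|_\infty\le|h|_\infty$ for every $y$ (valid since $|\mathcal{L}_{b,m}^{n}h|_\infty\le|h|_\infty$, as noted in the proof of Corollary \ref{Cor 3.4.2}) and $|g(x)|\le(1-b_m^{-C_9+C_8|\varphi|_\infty})|h|_\infty$ by hypothesis. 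This gives
\[
|\mathcal{L}_{b,m}^{N}g(z)|\le e^{\varphi_N(x)}\big(1-b_m^{-C_9+C_8|\varphi|_\infty}\big)|h|_\infty+\sum_{\substack{\sigma^N(y)=z\\ y\neq x}}e^{\varphi_N(y)}|h|_\infty .
\]

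The second step uses that $\varphi$ is normalized, so $\mathcal{L}_{0,0}^{N}1=1$, i.e. $\sum_{\sigma^N(y)=z}e^{\varphi_N(y)}=1$; thus $\sum_{y\neq x}e^{\varphi_N(y)}=1-e^{\varphi_N(x)}$ and the previous display collapses to $|\mathcal{L}_{b,m}^{N}g(z)|\le\big(1-e^{\varphi_N(x)}b_m^{-C_9+C_8|\varphi|_\infty}\big)|h|_\infty$. Finally, since $\varphi$ is real-valued and $N\le 4n_{b_m}=C_8\log b_m$, one has $\varphi_N(x)\ge -N|\varphi|_\infty\ge -C_8|\varphi|_\infty\log b_m$, hence $e^{\varphi_N(x)}\ge b_m^{-C_8|\varphi|_\infty}$, and therefore $e^{\varphi_N(x)}b_m^{-C_9+C_8|\varphi|_\infty}\ge b_m^{-C_9}$. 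Combining, $|\mathcal{L}_{b,m}^{C_8\log b_m}h(z)|=|\mathcal{L}_{b,m}^{N}g(z)|\le(1-b_m^{-C_9})|h|_\infty$, which is the assertion.

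No genuinely hard step is involved here: the argument is the standard ``one preimage carries a deficit, the rest sum to at most $1$'' trick, and the content is purely in the bookkeeping — verifying that $x$ is a $\sigma^{N}$-preimage of $z$ (immediate), that $4n_{b_m}=C_8\log b_m$, and, most importantly, that the exponent $-C_9+C_8|\varphi|_\infty$ appearing in the hypothesis is precisely calibrated so that the multiplicative loss $e^{\varphi_N(x)}\ge b_m^{-C_8|\varphi|_\infty}$ incurred along the $N$ extra iterates is exactly absorbed, returning the clean deficit $b_m^{-C_9}$. The structural input that makes everything work is the normalization of $\varphi$, which forces the transfer weights $e^{\varphi_N}$ to form a probability vector on each fibre $\sigma^{-N}(z)$; the Lipschitz hypothesis on $h$ plays no role in this particular step and is carried along only for consistency with the subsequent application.
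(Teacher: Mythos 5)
Your proof is correct and follows essentially the same route as the paper's: split $\mathcal{L}_{b,m}^{C_8\log b_m}=\mathcal{L}_{b,m}^{(4-k)n_{b_m}}\circ\mathcal{L}_{b,m}^{kn_{b_m}}$, isolate the preimage $y=x$ in the outer sum, bound it by the hypothesis and bound the remaining preimages by $|h|_\infty$, then use $\mathcal{L}_\varphi^{N}1=1$ and $e^{\varphi_N(x)}\ge b_m^{-C_8|\varphi|_\infty}$ to absorb the loss. You merely spell out the last absorption inequality, which the paper states without detail, and correctly observe that the Lipschitz hypothesis is not actually used here.
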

\begin{proof}
	We compute
	$$
	\begin{aligned}
		|\mathcal{L}_{b,m}^{C_{8}\log b_{m}}h(\sigma^{(4-k)n_{b_{m}}}(x))|=&|\mathcal{L}_{b,m}^{(4-k)n_{b_{m}}}\mathcal{L}_{b,m}^{kn_{b_{m}}}h(\sigma^{(4-k)n_{b_{m}}}(x))|\\
		\le&\sum_{\sigma^{(4-k)n_{b_{m}}}(y)=\sigma^{(4-k)n_{b_{m}}}(x)}e^{\varphi_{(4-k)n_{b_{m}}}(y)}|\mathcal{L}_{b,m}^{kn_{b_{m}}}h(y)|\\
		\le&e^{\varphi_{(4-k)n_{b_{m}}}(x)}|\mathcal{L}_{b,m}^{kn_{b_{m}}}h(x)|+\sum_{y\not=x}e^{\varphi_{(4-k)n_{b_{m}}}(y)}|h|_{\infty}\\
		\le&e^{\varphi_{(4-k)n_{b_{m}}}(x)}|(1-b_{m}^{-C_{9}+C_{8}|\varphi|_{\infty}})|h|_{\infty}+\sum_{y\not=x}e^{\varphi_{(4-k)n_{b_{m}}}(y)}|h|_{\infty}\\
		\le&(1-b_{m}^{-C_{9}})|h|_{\infty}
	\end{aligned}
	$$
	which completes the proof.
\end{proof}

We will use Lemma \ref{Lemma 2.10} to prove that the condition in Lemma \ref{Lemma 4.3.1} is satisfied as follows.

\begin{Lemma}\label{Lemma 4.3.2}
For any $h\in F_{\lambda}(X^{+})$ with $|h|_{Lip}\le2C_{4}b_{m}|h|_{\infty}$, there exist $k\in\{0,1,2\}$ and $x\in X^{+}$ such that $|\mathcal{L}_{b,m}^{kn_{b_{m}}}h(x)|\le(1-b_{m}^{-C_{9}+C_{8}|\varphi|_{\infty}})|h|_{\infty}$.
\end{Lemma}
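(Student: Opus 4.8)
\textbf{Proof plan for Lemma \ref{Lemma 4.3.2}.}

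The plan is to argue by contradiction: suppose that for \emph{every} $k\in\{0,1,2\}$ and \emph{every} $x\in X^{+}$ we have $|\mathcal{L}_{b,m}^{kn_{b_{m}}}h(x)|>(1-b_{m}^{-C_{9}+C_{8}|\varphi|_{\infty}})|h|_{\infty}$. The case $k=0$ already forces $|h(x)|$ to be close to $|h|_{\infty}$ everywhere, and combined with the hypothesis $|h|_{Lip}\le 2C_{4}b_{m}|h|_{\infty}$ this pins down the argument of $h$: on a ball of radius $\sim b_{m}^{-1}$ the phase of $h$ varies by a controlled amount, so after normalizing we may write $h = |h|_{\infty}\,e^{i\psi}(1+\text{small})$ on each such ball. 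The point of looking at $k=0,1,2$ rather than a single $k$ is the standard Dolgopyat trick: if there were genuine cancellation at any one level we would be done, so the assumption that there is no cancellation at \emph{three consecutive} levels $0,1,2$ is very rigid, and it is this rigidity that will clash with the Diophantine hypothesis.

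The core mechanism is the following. By Lemma \ref{Lemma 2.1}, inhomogeneous Diophantineness of $f_{t}$ gives three periodic points $x_{1},x_{2},x_{3}$ of $\sigma$ of a common period $N'$, whose Birkhoff sums of $r$ and $\Theta$ produce the inhomogeneously Diophantine pair $(\alpha',\beta')$. I would build, inside the preimage trees counted by $\mathcal{L}_{b,m}^{kn_{b_{m}}}$, configurations of points that agree except on long blocks following the orbits of $x_{1},x_{2},x_{3}$ (this is why we need room for $\sim n_{b_{m}}\approx \log b_{m}$ symbols — one can fit $\sim \log b_{m}/N'$ repetitions of each periodic block). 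Two such competing branches $y$ and $y'$ in the sum defining $\mathcal{L}_{b,m}^{kn_{b_{m}}}h(x)$ then contribute weights $e^{\varphi_{kn_{b_{m}}}(y)}h(y)$ and $e^{\varphi_{kn_{b_{m}}}(y')}h(y')$ whose \emph{phases} differ by an amount of the form $b\,(r_{\bullet}(y)-r_{\bullet}(y')) + m\,(\Theta_{\bullet}(y)-\Theta_{\bullet}(y')) + (\text{phase of }h\text{ difference})$, and by construction the $r$- and $\Theta$-differences are integer combinations of the Birkhoff sums $r_{N'}(x_i)$, $\Theta_{N'}(x_i)$. If for all three levels $k=0,1,2$ all these phase differences were within, say, $O(b_m^{-c})$ of a multiple of $2\pi$ — which is exactly what "no cancellation at levels $0,1,2$" forces, since two unit vectors whose sum has nearly maximal modulus must be nearly aligned — then one extracts a relation of the shape $|q\,b\,(\ell_1-\ell_2) + \cdots|$ small, i.e. after dividing through, a relation contradicting $|q\alpha'+m\beta'-p|\ge C(|q|+|m|_1)^{-\gamma}$ once $q,m$ are of size polynomial in $b_m$ and the error is $b_m^{-c}$ with $c$ chosen larger than $\gamma\cdot(\text{that polynomial degree})$. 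The freedom to fix the constants $C_{7},C_{8},C_{9}$ afterwards is what makes this work: one first sees what exponent $c$ the Diophantine inequality defeats, then sets $C_{8}$ (hence $n_{b_m}$, hence the number of available periodic blocks, hence the sizes of $q$ and $m$) and $C_{9}$ accordingly.

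I expect the main obstacle to be the bookkeeping in the cancellation step: one must (i) verify that the symbolic dynamics genuinely admits the needed concatenations — i.e.\ that the blocks following $x_1,x_2,x_3$ can be inserted into a common cylinder and joined up, which uses topological transitivity / the aperiodicity built into the Markov section, exactly as in the $N_1=N_2=N_3$ argument of Lemma \ref{Lemma 2.1}; (ii) control the $h$-phase contribution using $|h|_{Lip}\le 2C_4 b_m|h|_\infty$ and the contraction $d_\lambda(y,y')\le \lambda^{\text{common length}}$, so that over the $\sim n_{b_m}$ coordinates where $y,y'$ agree this contributes at most $O(b_m \lambda^{c\log b_m}) = O(b_m^{1-c|\log\lambda|})$, negligible for $C_8$ large; and (iii) keep the $\varphi$-weights comparable, i.e.\ bound $|\varphi_{kn_{b_m}}(y)-\varphi_{kn_{b_m}}(y')|$ and ensure the total mass of the competing branch is not too small — this is where the $e^{C_8|\varphi|_\infty}$ factor in the target inequality comes from, and it is why the statement carries that precise exponent $-C_9+C_8|\varphi|_\infty$. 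Once these three estimates are in place, the contradiction with the inhomogeneous Diophantine inequality is essentially a one-line computation, and it simultaneously produces the set $U$ of Lemma \ref{Lemma 3.4.4} as the union of cylinders $\sigma^{(4-k)n_{b_m}}(x)$ over the $x$ found here, whose measure is bounded below by $\mu$ of a fixed-depth cylinder $\gtrsim b_m^{-C_7}$.
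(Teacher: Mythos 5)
Your high-level plan matches the paper's: argue by contradiction from the no-cancellation hypothesis at all three levels $k=0,1,2$, feed in the periodic data from Lemma \ref{Lemma 2.1}, and close against the inhomogeneous Diophantine inequality. But the mechanism you propose for extracting the Diophantine violation is genuinely different from the paper's, and as written it has a gap.

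The paper does \emph{not} build competing pairs of branches inside the preimage tree. It turns the no-cancellation assumption \eqref{4.3.1} into a \emph{pointwise} constraint: Lemma \ref{Lemma 4.3.3} says every branch's phase is aligned with that of the pushed-forward point; Lemma \ref{Lemma 4.3.4} uses topological mixing together with the Lipschitz bound on $h$ to force $\theta_{h,2}-\theta_{h,1}$ to be near a single constant $c$; Lemma \ref{Lemma 4.3.5} assembles these into \eqref{4.3.6}, which says $(br+m\Theta)_{n_{b_m}}(x)-\theta_{h,1}(\sigma^{n_{b_m}}x)+\theta_{h,1}(x)$ is $b_m^{-O(1)}$-close to $c$ mod $2\pi$ \emph{for every} $x\in X^{+}$. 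Iterating this along each $N$-periodic point $x_k$ telescopes the $\theta_{h,1}$-terms exactly and makes the Birkhoff sums exactly integer multiples of $(\ell_k,\theta_k)$, giving \eqref{4.3.7}--\eqref{4.3.8} and the contradiction with \eqref{4.10}.

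Your route instead compares two preimage branches $y,y'$ that differ on a middle block shadowing the orbits of $x_1$ and $x_2$, and asserts that "the $r$- and $\Theta$-differences are integer combinations of $r_{N'}(x_i),\,\Theta_{N'}(x_i)$." That is the gap: $r$ and $\Theta$ lie in $F_{\lambda}(X^{+})$ and are only Lipschitz, not locally constant. A branch that shadows $x_1$'s block for $jN'$ steps is not literally $x_1$'s periodic orbit, so $r_{jN'}(\cdot)$ differs from $j\ell_1$ by a sum of geometric tails coming from the two transition zones, an error of size $O(1)$ (or $O(\lambda^{\text{buffer length}})$ if you pad with a buffer). After multiplying by $b$, which can be of size $b_m$, this error is not automatically dominated by the $b_m^{-c}$ cancellation threshold; you would need an explicit buffer of length $\gtrsim\log b_m/|\log\lambda|$ inserted between the transitions and the periodic block, and a corresponding accounting that this still fits inside $n_{b_m}=4^{-1}C_8\log b_m$ symbols. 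None of this appears in your plan. The paper's route sidesteps the whole issue because \eqref{4.3.6} is applied at exactly-periodic points, where $r_{jN}(x_k)=j\ell_k$ with no error whatsoever.

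Two smaller points. First, your plan does not explain what plays the role of the unknown phase constant $c$: the paper needs three orbits precisely so that two differences eliminate $c$, giving \eqref{4.3.7} and \eqref{4.3.8}; in a branch-comparison argument you would need an analogous elimination but do not say what the floating constant is. Second, the factor $b_m^{C_8|\varphi|_\infty}$ in the statement is not, as you suggest in step (iii), from comparing $\varphi$-weights across competing branches; it is the cost of propagating a single-point cancellation at level $kn_{b_m}$ forward to the full level $C_8\log b_m=4n_{b_m}$ in Lemma \ref{Lemma 4.3.1}, where the factor $e^{\varphi_{(4-k)n_{b_m}}(x)}\ge b_m^{-C_8|\varphi|_\infty}$ enters.
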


Thus, by the above lemma and Lemma \ref{Lemma 4.3.1}, we have a single point cancellation as shown in Lemma \ref{Lemma 4.3.1}. Then, exploiting the Gibbs property of $\mu$: Lemma \ref{Gibbs property}, it is not challenging to demonstrate that on a small neighbourhood $U$ of $\sigma^{(4-k)n_{b_{m}}}(x)$, which satisfies $\mu(U)\ge b_{m}^{-C_{7}}$, we have $|\mathcal{L}_{b,m}^{C_{8}\log b_{m}}h(x^{\prime})|\le (1-b_{m}^{-C_{9}})|h|_{\infty}$ for any $x^{\prime}\in U$. Indeed, according to Lemma~\ref{Lemma 4.3.1}, the estimate $|h|_{\text{Lip}} \le 2C_{4}b_{m} |h|_{\infty}$ implies $|\mathcal{L}_{b,m}^{C_{8}\log b_{m}}h|_{\text{Lip}} \le 2C_{4}b_{m} |h|_{\infty}$. Therefore, if $|\mathcal{L}_{b,m}^{C_{8}\log b_{m}}h(\sigma^{(4-k)n_{b_{m}}}(x))| \le (1-b_{m}^{-C_{9}})|h|_{\infty}$, then for any $y \in X^{+}$ with $d_{\lambda}(\sigma^{(4-k)n_{b_{m}}}(x),y) \le (4C_{4}b_{m}^{C_{9}+1})^{-1}$, we have
\[
\begin{aligned}
	|\mathcal{L}_{b,m}^{C_{8}\log b_{m}}h(y)|&\le |\mathcal{L}_{b,m}^{C_{8}\log b_{m}}h(y) - \mathcal{L}_{b,m}^{C_{8}\log b_{\pi}}h(\sigma^{(4-k)n_{b_{m}}}(x))| + (1-b_{m}^{-C_{9}})|h|_{\infty}\\
	&\le  \dfrac{1}{2}b_{m}^{-C_{9}}|h|_{\infty} + (1-b_{m}^{-C_{9}})|h|_{\infty} \\
	&= (1-2^{-1}b_{m}^{-C_{9}})|h|_{\infty}.
\end{aligned}
\]
Using the Gibbs property of $\mu$ (in Lemma \ref{Gibbs property}),  it is  easy to show that the set of these points $y$ has $\mu$-measure $\ge b_{m}^{-C_{7}}$ for some uniform constant $C_{7} > 0$. This completes the proof of Lemma~\ref{Lemma 3.4.4}.

In the remainder of this section, we prove Lemma \ref{Lemma 4.3.2}. We proceed by contradiction. Therefore, we assume that there exists $h\in F_{\lambda}(X^{+})$ with $|h|_{Lip}\le 2C_{4}b_{m}|h|_{\infty}$ such that for each $k\in\{0,1,2\}$,
\begin{equation}\label{4.3.1}
	|\mathcal{L}_{b,m}^{kn_{b_{m}}}h(x)|\ge (1-b_{m}^{-C_{9}+C_{8}|\varphi|_{\infty}})|h|_{\infty},\quad\text{for any } x\in X^{+}.
\end{equation}
Without loss of generality, replacing $h$ by $h/|h|_{\infty}$, we can assume $|h|_{\infty}=1$. Consequently, we can express
$$
\begin{aligned}
	h(x)&= |h(x)|e^{i\theta_{h,0}(x)},\\
	\mathcal{L}_{b,m}^{n_{b_{m}}}h(x)&= |\mathcal{L}_{b,m}^{n_{b_{m}}}h(x)|e^{i\theta_{h,1}(x)},\\
	\mathcal{L}_{b,m}^{2n_{b_{m}}}h(x)&= |\mathcal{L}_{b,m}^{2n_{b_{m}}}h(x)|e^{i\theta_{h,2}(x)}.
\end{aligned}
$$

\begin{Lemma}\label{Lemma 4.3.3}
	For any $x\in X^{+}$ and any $\sigma^{n_{b_{m}}}(y)=x$ we have
	$$|1-e^{i\theta_{h,0}(y)-i(br+m\Theta)_{n_{b_{m}}}(y)}e^{-i\theta_{h,1}(x)}|\le \dfrac{2}{|b_{m}|^{C_{9}/2-5C_{8}|\varphi|_{\infty}/8}}$$
	and
	$$|1-e^{i\theta_{h,1}(y)-i(br+m\Theta)_{n_{b_{m}}}(y)}e^{-i\theta_{h,2}(x)}|\le \dfrac{2}{|b_{m}|^{C_{9}/2-5C_{8}|\varphi|_{\infty}/8}}.$$
\end{Lemma}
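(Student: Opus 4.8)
The plan is to exploit the near-equality in \eqref{4.3.1} together with the defining recursion $\mathcal{L}_{b,m}^{(j+1)n_{b_m}}h = \mathcal{L}_{b,m}^{n_{b_m}}(\mathcal{L}_{b,m}^{jn_{b_m}}h)$ for $j=0,1$. Write $g_j := \mathcal{L}_{b,m}^{jn_{b_m}}h$, so $g_0 = h$, and recall $|h|_\infty = 1$. Expanding the transfer operator and the normalization $\mathcal{L}_\varphi^{n_{b_m}}1 = 1$, for fixed $x\in X^+$ one has the identity
\begin{equation*}
g_{j+1}(x) = \sum_{\sigma^{n_{b_m}}(y)=x} e^{\varphi_{n_{b_m}}(y)}\, e^{i(br+m\Theta)_{n_{b_m}}(y)}\, g_j(y),
\end{equation*}
while the weights $w_y := e^{\varphi_{n_{b_m}}(y)}$ are positive and sum to $1$. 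Thus $|g_{j+1}(x)|$ is bounded above by the convex combination $\sum_y w_y |g_j(y)|$, and in turn by $\sum_y w_y \le 1$ since $|g_j|\le 1$ (Corollary \ref{Cor 3.4.2}). The hypothesis \eqref{4.3.1} says $|g_{j+1}(x)| \ge 1 - b_m^{-C_9 + C_8|\varphi|_\infty}$, so the convex combination of the unit vectors $u_y := e^{i\theta_{h,j}(y) + i(br+m\Theta)_{n_{b_m}}(y)}$ (weighted by $w_y|g_j(y)|$, after noting $|g_j(y)|$ is also forced close to $1$) has modulus nearly $1$.

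The key step is the standard ``almost-aligned convex combination'' estimate: if $\sum_y w_y = 1$, $w_y \ge 0$, $|u_y| = 1$, and $|\sum_y w_y u_y| \ge 1 - \varepsilon$, then for every $y$ with $w_y$ not too small one has $|u_y - v| \le \sqrt{2\varepsilon}\,(1+o(1))$ where $v = \sum_y w_y u_y / |\sum_y w_y u_y|$ is the normalized barycentre. Here I would first absorb the discrepancy between $|g_j(y)|$ and $1$: from $|g_{j+1}(x)|\ge 1-\varepsilon$ with $\varepsilon = b_m^{-C_9+C_8|\varphi|_\infty}$ and $|g_j(y)|\le 1$, one gets $\sum_y w_y(1-|g_j(y)|) \le \varepsilon$, hence each term with $w_y \gtrsim$ (Gibbs lower bound, which is $\gtrsim e^{-C_8|\varphi|_\infty \log b_m} = b_m^{-C_8|\varphi|_\infty}$ for the full-length cylinder, by Lemma \ref{Gibbs property}) satisfies $|g_j(y)|\ge 1 - \varepsilon b_m^{C_8|\varphi|_\infty}$. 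Then apply the alignment estimate to conclude $|u_y - v|\le 2(\varepsilon b_m^{C_8|\varphi|_\infty})^{1/2}\cdot b_m^{C_8|\varphi|_\infty/4}$-type bound; tracking exponents, $\varepsilon^{1/2} = b_m^{-C_9/2 + C_8|\varphi|_\infty/2}$, and the correction factors from the weight lower bound contribute an extra $b_m^{C_8|\varphi|_\infty/8}$, giving the stated exponent $C_9/2 - 5C_8|\varphi|_\infty/8$. Finally, since $v$ does not depend on $y$, one can identify $e^{i\theta_{h,j+1}(x)} = v$ up to the same error (as $g_{j+1}(x) = |g_{j+1}(x)|e^{i\theta_{h,j+1}(x)}$ is within $\varepsilon$ of $v$ in argument), and rewriting $|u_y - e^{i\theta_{h,j+1}(x)}|$ as $|1 - e^{i\theta_{h,j}(y) - i(br+m\Theta)_{n_{b_m}}(y)}e^{-i\theta_{h,j+1}(x)}|$ (multiplying the unit vector $u_y$ by $e^{-i\theta_{h,j+1}(x)}\cdot$conjugate is an isometry) yields both displayed inequalities, the first with $j=0$ and the second with $j=1$.

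The main obstacle I expect is bookkeeping the exponents carefully: one must verify that the Gibbs lower bound on $w_y$ for a cylinder of length $n_{b_m} = \tfrac14 C_8\log b_m$ is indeed of the claimed polynomial-in-$b_m$ order (so that "$w_y$ not too small" holds for \emph{every} preimage $y$, not just most), and that the resulting chain $\varepsilon \to \varepsilon^{1/2} \to$ (corrected error) lands exactly at the exponent $C_9/2 - 5C_8|\varphi|_\infty/8$ rather than something slightly worse. A secondary subtlety is that $\theta_{h,j+1}(x)$ must be shown to agree with the barycentre argument $\arg(v)$ up to an error dominated by the same quantity — this follows because $g_{j+1}(x)$ itself lies within $\varepsilon$ of $v$ (being a sub-convex combination of the $u_y$), and $\varepsilon \ll \varepsilon^{1/2}$. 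Everything else is the routine convexity inequality $|1 - e^{i\psi}| \le |\psi|$ combined with $|u - v|^2 = 2 - 2\operatorname{Re}(u\bar v)$, which I would not spell out in detail.
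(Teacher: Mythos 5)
Your overall strategy — exploit the near-equality in \eqref{4.3.1}, write $\mathcal{L}_{b,m}^{n_{b_m}}$ as a convex combination with weights $w_y=e^{\varphi_{n_{b_m}}(y)}$ summing to $1$, and use $|1-e^{i\psi}|^2=2\Re(1-e^{i\psi})$ to turn a linear estimate on the real part into a square-root estimate on the modulus — is exactly what the paper does. However, as written your proposal contains a step that is both unnecessary and lossy, and carried out carefully it would \emph{not} reproduce the stated exponent $C_9/2-5C_8|\varphi|_\infty/8$.

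The issue is how you control $1-|g_j(y)|$. You deduce it indirectly, from $\sum_y w_y(1-|g_j(y)|)\le\varepsilon$, which only gives $1-|g_j(y)|\le\varepsilon/w_y$ after invoking the pointwise lower bound on $w_y$. But \eqref{4.3.1} holds for \emph{every} $x\in X^+$ and every $k\in\{0,1,2\}$, so applying it at $k=j$ and at the point $y$ gives $|g_j(y)|\ge 1-\varepsilon$ directly, with no factor of $1/w_y$. This matters for the arithmetic: the quantity you end up taking the square root of is $(\text{bound on } 1-\Re(u_y\bar v))$, and this comes out as $\big(\varepsilon+(1-|g_j(y)|)\big)/w_y$. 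With the direct bound the numerator is $\lesssim\varepsilon$, the denominator contributes $w_y^{-1}\le b_m^{C_8|\varphi|_\infty/4}$ (note $n_{b_m}=\tfrac14 C_8\log b_m$, so the Birkhoff sum bound gives $w_y\ge b_m^{-C_8|\varphi|_\infty/4}$, not $b_m^{-C_8|\varphi|_\infty}$ as you wrote, and the Gibbs property Lemma A.4 is not needed — this is an elementary $\sup$-norm bound), and the square root yields the exponent $C_9/2-5C_8|\varphi|_\infty/8$. With your indirect bound the numerator already carries a factor $1/w_y$, so an extra $b_m^{C_8|\varphi|_\infty/4}$ appears under the square root, landing you at $C_9/2-3C_8|\varphi|_\infty/4$, a strictly weaker exponent. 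Your final exponent is stated correctly but does not follow from your own intermediate steps.

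Two smaller remarks. First, you record an internal inconsistency: you claim the weight lower bound is $b_m^{-C_8|\varphi|_\infty}$ but then say the square-root correction is $b_m^{C_8|\varphi|_\infty/8}$; these are incompatible. Second, your concern about identifying $e^{i\theta_{h,j+1}(x)}$ with the normalized barycentre $v$ is moot: $g_{j+1}(x)=\sum_y w_y|g_j(y)|u_y$ \emph{is} the barycentre, and $e^{i\theta_{h,j+1}(x)}$ is by definition its normalization, so $v=e^{i\theta_{h,j+1}(x)}$ exactly. The only step needed after $w_y\Re\big(1-u_y e^{-i\theta_{h,j+1}(x)}\big)\le 2\varepsilon$ (which holds termwise because each real part is nonnegative) is the elementary chain $|1-u_y e^{-i\theta_{h,j+1}(x)}|\le\sqrt{2\Re(1-u_y e^{-i\theta_{h,j+1}(x)})}\le\sqrt{4\varepsilon/w_y}$.
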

\begin{proof}
	Since $\mathcal{L}_{\varphi}1=1$, by \eqref{4.3.1} we have
	\begin{equation}\label{4.3.2}
		\begin{aligned}
			&\sum_{\sigma^{n_{b_{m}}}(y)=x}e^{\varphi_{n_{b_{m}}}(y)}(1-|h(y)|e^{i\theta_{h,0}(y)-i(br+m\Theta)_{n_{b_{m}}}(y)}e^{-i\theta_{h,1}(x)})\\
			=&1-|\mathcal{L}_{b,m}^{n_{b_{m}}}h(x)|\\
			\le& \dfrac{1}{|b_{m}|^{C_{9}-C_{8}|\varphi|_{\infty}}}.
		\end{aligned}
	\end{equation}
	Again, by \eqref{4.3.1} and \eqref{4.3.2} we have
	\begin{equation*}
		\bigg|\sum_{\sigma^{n_{b_{m}}}(y)=x}e^{\varphi_{n_{b_{m}}}(y)}(1-e^{i\theta_{h,0}(y)-i(br+m\Theta)_{n_{b_{m}}}(y)}e^{-i\theta_{h,1}(x)})\bigg|\le \dfrac{2}{|b_{m}|^{C_{9}-C_{8}|\varphi|_{\infty}}}.
	\end{equation*}
	The above bound implies that
	\begin{equation}\label{4.3.3}
		\begin{aligned}
			&e^{\varphi_{n_{b_{m}}}(y)}\Re(1-e^{i\theta_{h,0}(y)-i(br+m\Theta)_{n_{b_{m}}}(y)}e^{-i\theta_{h,1}(x)})\\
			\le&\Re\sum_{\sigma^{n_{b_{m}}}(y)=x}e^{\varphi_{n_{b_{m}}}(y)}(1-e^{i\theta_{h,0}(y)-i(br+m\Theta)_{n_{b_{m}}}(y)}e^{-i\theta_{h,1}(x)})\\
			\le& \bigg|\sum_{\sigma^{n_{b_{m}}}(y)=x}e^{\varphi_{n_{b_{m}}}(y)}(1-e^{i\theta_{h,0}(y)-i(br+m\Theta)_{n_{b_{m}}}(y)}e^{-i\theta_{h,1}(x)})\bigg|\\
			\le&\dfrac{2}{|b_{m}|^{C_{9}-C_{8}|\varphi|_{\infty}}}
		\end{aligned}
	\end{equation}
	where $\Re$ means the real part of a complex number. Hence, by \eqref{4.3.3} we have
	$$
	\begin{aligned}
		&|1-e^{i\theta_{h,0}(y)-i(br+m\Theta)_{n_{b_{m}}}(y)}e^{-i\theta_{h,1}(x)}|\\
		\le& \sqrt{2\Re(1-e^{i\theta_{h,0}(y)-i(br+m\Theta)_{n_{b_{m}}}(y)}e^{-i\theta_{h,1}(x)})}\\
		\le& \bigg(\dfrac{4}{|b_{m}|^{C_{9}-5C_{8}|\varphi|_{\infty}/4}}\bigg)^{1/2}
	\end{aligned}
	$$
	which proves the first part. A similar argument gives the second part.
\end{proof}

\begin{Lemma}\label{Lemma 4.3.4}
	There exists $c\in\mathbb{R}$ such that for any $x\in X^{+}$ we have
	$$|e^{i(\theta_{h,2}(x)-\theta_{h,1}(x))}-e^{ic}|\le \dfrac{4}{b_{m}^{C_{9}/2-5C_{8}|\varphi|_{\infty}/8}}+\dfrac{4C_{4}}{b_{m}^{-4^{-1}C_{8}\log\lambda-1}}.$$
\end{Lemma}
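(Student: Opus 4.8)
\textbf{Plan for the proof of Lemma \ref{Lemma 4.3.4}.}
The idea is to convert the two local-constancy estimates from Lemma \ref{Lemma 4.3.3} into a statement that the phase difference $\theta_{h,2}-\theta_{h,1}$ is approximately constant. Fix $x\in X^{+}$ and choose any preimage $y$ with $\sigma^{n_{b_{m}}}(y)=x$; then the two inequalities of Lemma \ref{Lemma 4.3.3} give, after dividing, that
\begin{equation*}
\big|e^{i(\theta_{h,2}(x)-\theta_{h,1}(x))}-e^{i(\theta_{h,1}(y)-\theta_{h,0}(y))}\big|\le\frac{4}{b_{m}^{C_{9}/2-5C_{8}|\varphi|_{\infty}/8}},
\end{equation*}
since $|e^{i\xi_{1}}-e^{i\xi_{2}}|\le|1-e^{i(\xi_{1}-\eta)}|+|1-e^{i(\xi_{2}-\eta)}|$ applied with the common phase $\eta=-(br+m\Theta)_{n_{b_{m}}}(y)$. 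Thus the quantity $e^{i(\theta_{h,2}(x)-\theta_{h,1}(x))}$ is, up to an error of order $b_{m}^{-(C_{9}/2-5C_{8}|\varphi|_{\infty}/8)}$, determined by the value of the function $\psi:=\theta_{h,1}-\theta_{h,0}$ at a point $y$ lying in an $n_{b_{m}}$-cylinder.

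\textbf{Controlling the oscillation of $\psi$.}
The second term in the claimed bound should come from the Lipschitz regularity of $h$ and of the iterated transfer operator. Since $\mathcal{L}_{b,m}^{n_{b_{m}}}h$ and $h$ both lie in $F_{\lambda}(X^{+})$ with Lipschitz norms controlled by $C_{4}b_{m}|h|_{\infty}$ (for $h$ by hypothesis, for $\mathcal{L}_{b,m}^{n_{b_{m}}}h$ by Corollary \ref{Cor 3.4.2} or Lemma \ref{Lemma 3.4.1}), and since on the set where $|h|,|\mathcal{L}_{b,m}^{n_{b_{m}}}h|$ are bounded below by $1-b_{m}^{-C_{9}+C_{8}|\varphi|_{\infty}}\ge 1/2$ the arguments $\theta_{h,0},\theta_{h,1}$ are Lipschitz with a comparable constant, the function $\psi$ has oscillation at most of order $C_{4}b_{m}$ times the diameter of an $n_{b_{m}}$-cylinder. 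The diameter of such a cylinder is $\lesssim\lambda^{n_{b_{m}}}=\lambda^{4^{-1}C_{8}\log b_{m}}=b_{m}^{4^{-1}C_{8}\log\lambda}$, so for two points $x,x'\in X^{+}$ and preimages $y,y'$ in a common long cylinder we get $|\psi(y)-\psi(y')|\lesssim C_{4}b_{m}\cdot b_{m}^{4^{-1}C_{8}\log\lambda}=C_{4}b_{m}^{1+4^{-1}C_{8}\log\lambda}$, which is exactly the shape of the second error term $4C_{4}b_{m}^{-(-4^{-1}C_{8}\log\lambda-1)}$. Since $\sigma$ is topologically mixing, for $n_{b_{m}}$ large enough any two points $x,x'$ admit preimages $y$ of $x$ and $y'$ of $x'$ under $\sigma^{n_{b_{m}}}$ lying in the same $1$-cylinder (indeed in the same arbitrarily long cylinder), so $\psi(y)$ and $\psi(y')$ are within the cylinder-oscillation bound; combining this with the displayed inequality above for both $x$ and $x'$ shows $e^{i(\theta_{h,2}-\theta_{h,1})}$ is within the stated total error of a single complex number of modulus one, which we write as $e^{ic}$ for some $c\in\mathbb{R}$.

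\textbf{Main obstacle.}
The delicate point is the passage from the bound on $|h|,|\mathcal{L}_{b,m}^{n_{b_{m}}}h|$ being close to $1$ (equation \eqref{4.3.1}) to Lipschitz control of the phases $\theta_{h,0},\theta_{h,1}$: one must check that $x\mapsto\arg(z(x))$ is Lipschitz with constant $\lesssim|z|_{Lip}/\inf|z|$ when $|z|$ is bounded away from $0$, and that the relevant infimum here is $\ge 1/2$ for $b_{m}$ large, which uses \eqref{4.3.1} together with the exponent inequality $C_{9}-C_{8}|\varphi|_{\infty}>0$ (guaranteed by choosing $C_{8}$ small relative to $C_{9}$, or equivalently $C_{9}$ large). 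One must also track that the exponents in the two error terms are of the right form and that the implied constants can be absorbed into the $4$ and $4C_{4}$ written in the statement; this is bookkeeping but needs the constants $C_{8},C_{9}$ to have been chosen in the correct order of dependence, which is why this lemma sits where it does in the chain. Modulo these routine regularity estimates, the argument is the standard "the phase is locally constant, hence globally almost constant by mixing" step from the Dolgopyat method.
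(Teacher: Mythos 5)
Your proposal follows the same route as the paper's proof: deducing the estimate
\(|e^{i(\theta_{h,2}(x)-\theta_{h,1}(x))}-e^{i(\theta_{h,1}(y)-\theta_{h,0}(y))}|\le 4b_{m}^{-(C_{9}/2-5C_{8}|\varphi|_{\infty}/8)}\) from Lemma \ref{Lemma 4.3.3} by a triangle inequality after pulling out the common phase, then using topological mixing to choose, for each $x$, a preimage $y\in\sigma^{-n_{b_{m}}}(x)$ inside an $n_{b_{m}}$-cylinder around a fixed reference point, and finally controlling the oscillation of $\psi=\theta_{h,1}-\theta_{h,0}$ over that cylinder by the Lipschitz bounds $|h|_{\text{Lip}},|\mathcal{L}_{b,m}^{n_{b_{m}}}h|_{\text{Lip}}\le 2C_{4}b_{m}$. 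The only cosmetic difference is that you compare arbitrary pairs $x,x'$ via preimages $y,y'$ in a common cylinder, whereas the paper fixes one point $x_{0}$ and sets $c=\theta_{h,1}(x_{0})-\theta_{h,0}(x_{0})$; the substance, including the exponent bookkeeping and the (correctly flagged) modulus-bounded-below step needed to pass from Lipschitz control of $|h|$ to Lipschitz control of its argument, is identical.
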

\begin{proof}
	By Lemma \ref{Lemma 4.3.3}, for any $x\in X^{+}$ and any $\sigma^{n_{b_{m}}}(y)=x$ we have
	\begin{equation}\label{4.3.4}
		|e^{i(\theta_{h,2}(x)-\theta_{h,1}(x))}-e^{i(\theta_{h,1}(y)-\theta_{h,0}(y))}|\le \dfrac{4}{b_{m}^{C_{9}/2-5C_{8}|\varphi|_{\infty}/8}}.
	\end{equation}
	Fix a point $x_{0}\in X^{+}$. Since $\sigma:X^{+}\to X^{+}$ is topologically mixing, for any $x\in X^{+}$ there exists a point $y\in \sigma^{-n_{b_{m}}}(x)$ such that
	\begin{equation}\label{4.3.5}
		d_{\lambda}(y,z_{0})\le \lambda^{n_{b_{m}}}=\dfrac{1}{b_{m}^{-4^{-1}C_{8}\log\lambda}}.
	\end{equation}
	By Lemma \ref{Lemma 3.4.1}, it not difficult to show that the condition $|h|_{Lip}\le 2C_{4}b_{m}$ implies $|\mathcal{L}^{n_{b_{m}}}_{b,m}h|_{Lip}\le 2C_{4}b_{m}$ as well. In particular, by the definitions of $\theta_{h,0}$ and $\theta_{h,1}$, from \eqref{4.3.5}, we can obtain that
	$$|\theta_{h,1}(y)-\theta_{h,0}(y)-\theta_{h,1}(x_{0})+\theta_{h,0}(x_{0})|\le \dfrac{4C_{4}}{b_{m}^{-4^{-1}C_{8}\log\lambda-1}}.$$
	Applying the above estimate to \eqref{4.3.4}, we have
	$$|e^{i(\theta_{h,2}(x)-\theta_{h,1}(x))}-e^{i(\theta_{h,1}(x_{0})-\theta_{h,0}(x_{0}))}|\le \dfrac{4}{b_{m}^{C_{9}/2-5C_{8}|\varphi|_{\infty}/8}}+\dfrac{4C_{4}}{b_{m}^{-4^{-1}C_{8}\log\lambda-1}}.$$
	We can now set $c:=\theta_{h,1}(x_{0})-\theta_{h,0}(x_{0})$ to complete the proof.
\end{proof}

\begin{Lemma}\label{Lemma 4.3.5}
	For any $x\in X^{+}$ we have
	$$|e^{i(br+m\Theta)_{n_{b_{m}}}(x)-\theta_{h,1}\circ\sigma^{n_{b_{m}}}(x)+\theta_{h,1}(x))}-e^{ic}|\le \dfrac{6}{b_{m}^{C_{9}/2-5C_{8}|\varphi|_{\infty}/8}}+\dfrac{4C_{4}}{b_{m}^{-4^{-1}C_{8}\log\lambda-1}}.$$
\end{Lemma}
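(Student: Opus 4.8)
The plan is to combine Lemma \ref{Lemma 4.3.3} (first inequality) with Lemma \ref{Lemma 4.3.4}, using the chain
\[
\theta_{h,2}(x) - \theta_{h,1}(x), \qquad
\theta_{h,1}(y) - \theta_{h,0}(y) - (br+m\Theta)_{n_{b_m}}(y),
\]
evaluated at a preimage $y \in \sigma^{-n_{b_m}}(x)$. Indeed, Lemma \ref{Lemma 4.3.3} tells us that for any $x$ and any $y$ with $\sigma^{n_{b_m}}(y) = x$, the quantity $e^{i\theta_{h,0}(y) - i(br+m\Theta)_{n_{b_m}}(y)} e^{-i\theta_{h,1}(x)}$ is within $2 b_m^{-C_9/2 + 5C_8|\varphi|_\infty/8}$ of $1$. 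Rewriting with $x$ playing the role of $\sigma^{n_{b_m}}(y)$ and relabelling, we get for all $x$,
\[
\bigl| e^{i\theta_{h,1}(x)} - e^{i(br+m\Theta)_{n_{b_m}}(x) + i\theta_{h,0}(x)} \bigr|
\le \frac{2}{b_m^{C_9/2 - 5C_8|\varphi|_\infty/8}},
\]
provided we apply the lemma at $\sigma^{n_{b_m}}(x)$ with $y = x$; more precisely the first inequality of Lemma \ref{Lemma 4.3.3} with the pair $(\sigma^{n_{b_m}}(x), x)$ yields $|1 - e^{i\theta_{h,0}(x) - i(br+m\Theta)_{n_{b_m}}(x)} e^{-i\theta_{h,1}(\sigma^{n_{b_m}}(x))}| \le 2 b_m^{-C_9/2 + 5C_8|\varphi|_\infty/8}$.

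First I would substitute the relation just obtained into the statement of Lemma \ref{Lemma 4.3.4}. The target expression is $e^{i\bigl((br+m\Theta)_{n_{b_m}}(x) - \theta_{h,1}(\sigma^{n_{b_m}}(x)) + \theta_{h,1}(x)\bigr)}$. I would write this as a product of unit-modulus factors and telescope: replace $e^{-i\theta_{h,1}(\sigma^{n_{b_m}}(x))}$ by $e^{-i(br+m\Theta)_{n_{b_m}}(x) - i\theta_{h,0}(x)}$ at the cost of the error $2 b_m^{-C_9/2 + 5C_8|\varphi|_\infty/8}$ (since multiplication of unit complex numbers is $1$-Lipschitz in each factor). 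After this substitution the $(br+m\Theta)_{n_{b_m}}(x)$ terms cancel and we are left with $e^{i(\theta_{h,1}(x) - \theta_{h,0}(x))}$, which differs from $e^{-ic} \cdot e^{i(\theta_{h,2} - \theta_{h,1})}$-type expressions. Wait—more carefully: after the cancellation the expression becomes $e^{i\theta_{h,1}(x) - i\theta_{h,0}(x)}$, and by Lemma \ref{Lemma 4.3.4} applied with $x$ replaced by an appropriate preimage (so that $\theta_{h,2} - \theta_{h,1}$ at that point equals $\theta_{h,1}(x) - \theta_{h,0}(x)$ via \eqref{4.3.4}), this is within the stated bound of $e^{ic}$. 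So I would invoke Lemma \ref{Lemma 4.3.3}'s second inequality once more to bridge $\theta_{h,1}(x) - \theta_{h,0}(x)$ to a $\theta_{h,2} - \theta_{h,1}$ term, then apply Lemma \ref{Lemma 4.3.4}.

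Then I would collect the errors: one factor of $2 b_m^{-C_9/2 + 5C_8|\varphi|_\infty/8}$ from the $(br+m\Theta)$-substitution, another from the second inequality of Lemma \ref{Lemma 4.3.3}, and the full bound $4 b_m^{-C_9/2 + 5C_8|\varphi|_\infty/8} + 4C_4 b_m^{4^{-1}C_8\log\lambda + 1}$ from Lemma \ref{Lemma 4.3.4}; summing gives at most $6 b_m^{-C_9/2 + 5C_8|\varphi|_\infty/8} + 4C_4 b_m^{4^{-1}C_8\log\lambda + 1}$ (after absorbing the two extra factors of $2$ into the constant $6$), which is exactly the claimed inequality. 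The main obstacle is purely bookkeeping: getting the preimage/composition indices consistent so that the cancellation of the $(br+m\Theta)_{n_{b_m}}$ terms is exact, and making sure that each application of Lemma \ref{Lemma 4.3.3} and \ref{Lemma 4.3.4} is at the correct point of $X^+$; once the telescoping is set up correctly, the triangle inequality and the $1$-Lipschitz property of complex multiplication on the unit circle finish the estimate.
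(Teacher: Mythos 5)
Your route is different from the paper's, and it does work, but it is longer and does not actually recover the stated constant $6$; the claim in your last paragraph that the two "extra factors of $2$" can be "absorbed into the constant $6$" is an arithmetical slip rather than a valid reduction.

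The paper's proof is a two-step telescope that never touches the first inequality of Lemma \ref{Lemma 4.3.3}. It first applies Lemma \ref{Lemma 4.3.4} at the point $\sigma^{n_{b_m}}(x)$ to replace $-\theta_{h,1}\circ\sigma^{n_{b_m}}(x)$ by $-\theta_{h,2}\circ\sigma^{n_{b_m}}(x)+c$, at cost $4 b_m^{-C_9/2+5C_8|\varphi|_\infty/8}+4C_4 b_m^{4^{-1}C_8\log\lambda+1}$; the remaining unit-modulus quantity is then exactly the one controlled by the \emph{second} inequality of Lemma \ref{Lemma 4.3.3} with $y=x$ (and $\sigma^{n_{b_m}}(y)=\sigma^{n_{b_m}}(x)$), costing a further $2 b_m^{-C_9/2+5C_8|\varphi|_\infty/8}$. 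The sum is precisely $6 b_m^{-C_9/2+5C_8|\varphi|_\infty/8}+4C_4 b_m^{4^{-1}C_8\log\lambda+1}$.

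Your proposal inverts this order: you first use the \emph{first} inequality of Lemma \ref{Lemma 4.3.3} to eliminate the phase $(br+m\Theta)_{n_{b_m}}(x)$ and reduce the target to $e^{i(\theta_{h,1}(x)-\theta_{h,0}(x))}$, and then you must compare this with $e^{ic}$. That comparison cannot be done by a single application of the second inequality of Lemma \ref{Lemma 4.3.3} as you suggest, because that inequality reintroduces a stray $(br+m\Theta)_{n_{b_m}}$ phase; what is actually needed is display \eqref{4.3.4} from the proof of Lemma \ref{Lemma 4.3.4} (which combines both halves of Lemma \ref{Lemma 4.3.3} to cancel that phase), applied at the pair $(\sigma^{n_{b_m}}(x),x)$, followed by Lemma \ref{Lemma 4.3.4} at $\sigma^{n_{b_m}}(x)$. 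When you tally the errors honestly, this chain gives $2+4+4=10$ copies of $b_m^{-C_9/2+5C_8|\varphi|_\infty/8}$ rather than $6$ (plus the same $4C_4 b_m^{4^{-1}C_8\log\lambda+1}$). The inflated constant is harmless for the downstream argument, but it is not the bound stated in Lemma \ref{Lemma 4.3.5}; the paper's ordering is the one that gives the sharp constant because the Lemma \ref{Lemma 4.3.4} substitution and the Lemma \ref{Lemma 4.3.3} cancellation are performed on the \emph{same} telescope rather than on two separate ones.

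Two smaller points of bookkeeping. First, you should not keep a free preimage $y\in\sigma^{-n_{b_m}}(x)$ in the setup; the only application of Lemma \ref{Lemma 4.3.3} needed here is with $y=x$ and the image point $\sigma^{n_{b_m}}(x)$, and carrying an arbitrary $y$ around just obscures which evaluation points must match. Second, the sign of the $(br+m\Theta)_{n_{b_m}}$ term in Lemma \ref{Lemma 4.3.3} as printed does not match what falls out of the identity $\mathcal{L}_{b,m}^{n_{b_m}}h(x)=\sum_{\sigma^{n_{b_m}}(y)=x}e^{\varphi_{n_{b_m}}(y)}e^{i(br+m\Theta)_{n_{b_m}}(y)}h(y)$, so whether your substitution produces the desired cancellation depends on which sign convention you adopt; your written substitution only cancels under the corrected sign. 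Since you are relying on this cancellation as the engine of the proof, you should make the sign convention explicit and check it against the definition of $\mathcal{L}_{b,m}$, rather than against the printed statement of Lemma \ref{Lemma 4.3.3}.
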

\begin{proof}
	By Lemmas \ref{Lemma 4.3.3} and \ref{Lemma 4.3.4}, we have
	$$
	\begin{aligned}
		&|e^{i((br+m\Theta)_{n_{b_{m}}}(x)-\theta_{h,1}\circ\sigma^{n_{b_{m}}}(x)+\theta_{h,1}(x))}-e^{ic}|\\
		\le&|e^{i((br+m\Theta)_{n_{b_{m}}}(x)-\theta_{h,2}\circ\sigma^{n_{b_{m}}}(x)+\theta_{h,1}(x)+c)}-e^{ic}|+\dfrac{4}{b_{m}^{C_{9}/2-5C_{8}|\varphi|_{\infty}/8}}+\dfrac{4C_{4}}{b_{m}^{-4^{-1}C_{8}\log\lambda-1}}\\
		\le&\dfrac{2}{|b_{m}|^{C_{9}/2-5C_{8}|\varphi|_{\infty}/8}}+\dfrac{4}{b_{m}^{C_{9}/2-5C_{8}|\varphi|_{\infty}/8}}+\dfrac{-4C_{4}}{b_{m}^{-4^{-1}C_{8}\log\lambda-1}}
	\end{aligned}
	$$
	which completes the proof.
\end{proof}
\bigskip

Now, Lemma \ref{Lemma 4.3.2} follows easily from the above estimate. We can choose $C_{9}>0$ such that 
$$\dfrac{1}{b_{m}^{C_{9}/2-5C_{8}|\varphi|_{\infty}/8}}\le\dfrac{1}{b_{m}^{-4^{-1}C_{8}\log\lambda-1}}.$$
In particular, by Lemma \ref{Lemma 4.3.5}, for any $x\in X^{+}$ we have
\begin{equation}\label{4.3.6}
	|e^{i(br+m\Theta)_{n_{b_{m}}}(x)-\theta_{h,1}\circ\sigma^{n_{b_{m}}}(x)+\theta_{h,1}(x))}-e^{ic}|\le\dfrac{10C_{4}}{b_{m}^{-4^{-1}C_{8}\log\lambda-1}}.
\end{equation}
Let $\{\sigma^{N}(x_{1})=x_{1}\}$, $\{\sigma^{N}(x_{2})=x_{2}\}$ and $\{\sigma^{N}(x_{3})=x_{3}\}$ be the three periodic points in Lemma~\ref{Lemma 2.10}, with lengths $\ell_{k}=r_{N}(x_{k})$ and holonomies $\theta_{k}=\Theta_{N}(x_{k})$, $k=1,2,3$. We can apply \eqref{4.3.6} to their periodic orbits to obtain that
\begin{equation*}
	d(n_{b_{m}}b\ell_{k}+n_{b_{m}}m\theta_{k}-Nc,\ 2\pi\mathbb{Z})\le\dfrac{10C_{4}}{b_{m}^{-4^{-1}C_{8}\log\lambda-1}},
\end{equation*}
for each $1\le k\le 3$. Thus, 
\begin{equation*}
	d(n_{b_{m}}b(\ell_{1}-\ell_{2})+n_{b_{m}}m(\theta_{1}-\theta_{2}),\ 2\pi\mathbb{Z})\le\dfrac{20C_{4}}{b_{m}^{-4^{-1}C_{8}\log\lambda-1}},
\end{equation*}
as well as
\begin{equation*}
	d(n_{b_{m}}b(\ell_{2}-\ell_{3})+n_{b_{m}}m(\theta_{2}-\theta_{3}),\ 2\pi\mathbb{Z})\le\dfrac{20C_{4}}{b_{m}^{-4^{-1}C_{8}\log\lambda-1}}.
\end{equation*}
Therefore, for some $q\in\mathbb{Z}$ and $p\in\mathbb{Z}$ we have
\begin{equation}\label{4.3.7}
	|n_{b_{m}}b(\ell_{1}-\ell_{2})+n_{b_{m}}m(\theta_{1}-\theta_{2})-2\pi p |\le\dfrac{20C_{4}}{b_{m}^{-4^{-1}C_{8}\log\lambda-1}}
\end{equation}
and
\begin{equation}\label{4.3.8}
	|n_{b_{m}}b(\ell_{2}-\ell_{3})+n_{b_{m}}m(\theta_{2}-\theta_{3})-2\pi q|\le\dfrac{20C_{4}}{b_{m}^{-4^{-1}C_{8}\log\lambda-1}}.
\end{equation}
Note that we should have $|q|\le C_{13}n_{b_{m}}b_{m}$ for a uniform constant $C_{13}>0$.

Recalling
$$\alpha^{\prime}=\dfrac{\ell_{1}-\ell_{2}}{\ell_{2}-\ell_{3}}\quad\text{and}\quad\beta^{\prime}=\dfrac{1}{2\pi}(\theta_{1}-\theta_{2})-\alpha^{\prime}\dfrac{1}{2\pi}(\theta_{2}-\theta_{3}).$$
By \eqref{4.3.7} and \eqref{4.3.8}, one can obtain that 
\begin{equation}\label{4.9}
	|q\alpha^{\prime}+n_{b_{m}}m\beta^{\prime}-p|\le \dfrac{(1+\alpha^{\prime})20C_{4}}{b_{m}^{-4^{-1}C_{8}\log\lambda-1}}.
\end{equation}
Meanwhile, by Lemma \ref{Lemma 2.10}, we know that $(\alpha^{\prime},\beta^{\prime})\in\mathbb{R}^{d+1}$ is an inhomogeneous Diophantine number, namely, there exist $C_{14}>0$ and $\gamma>0$ such that for any $p\in\mathbb{Z}$ and any $0\not=(q,m)\in\mathbb{Z}^{d+1}$,
\begin{equation}\label{4.10}
|q\alpha^{\prime}+m\beta^{\prime}-p|\ge C_{14}(|q|+|m|)^{-\gamma}.
\end{equation}
Note that $b_{m}=|b|+C_{1}|m|$ and $|q|\le C_{13}n_{b_{m}}b_{m}$. By choosing $C_{8}>0$ large enough, we can further bound \eqref{4.9} as follows:
\begin{equation*}
	|q\alpha^{\prime}+n_{b_{m}}m\beta^{\prime}-p|\le \dfrac{(1+\alpha^{\prime})20C_{4}}{b_{m}^{-4^{-1}C_{8}\log\lambda-1}}\le C_{14}(|q|+|n_{b_{m}}m|)^{-\gamma}.
\end{equation*}
which contradicts \eqref{4.10} and thus completing the proof of Lemma \ref{Lemma 4.3.2}.


\begin{thebibliography}{100}
	
	%\bibitem{Ara16a}
	%V. Araújo, O. Butterley and P. Varandas, Open sets of axiom A flows with exponentially mixing attractors. \textit{Proc. Amer. Math. Soc.} 144 (2016), no. 7, 2971-2984.
	
	%\bibitem{Ara16b}
	%V. Araújo and I. Melbourne, Exponential decay of correlations for nonuniformly hyperbolic flows with a $C^{1+\alpha}$ stable foliation, including the classical Lorenz attractor. \textit{Ann. Henri Poincaré} 17 (2016), no. 11, 2975-3004.
	
	%\bibitem{App14}
	%D. Applebaum, Probability on compact Lie groups. With a foreword by Herbert Heyer. Probability Theory and Stochastic Modelling, 70. Springer, Cham, 2014. xxvi+217 pp. 
	
	%\bibitem{Avi06}
	%A. Avila, S. Gouëzel and J.C. Yoccoz, Exponential mixing for the Teichmüller flow. \textit{Publ. Math. Inst. Hautes Etudes Sci.} No. 104 (2006), 143-211.
	
	%\bibitem{Avi07}
	%A.  Avila and G.  Forni, Weak mixing for interval exchange transformations and translation flows. \textit{Ann. of Math. (2)} 165 (2007), no. 2, 637-664.
	
	\bibitem{Bal00}
	V. Baladi, Positive transfer operators and decay of correlations. \textit{Advanced Series in Nonlinear Dynamics}, 16. World Scientific Publishing Co., Inc., River Edge, NJ, 2000. x+314 pp.
	
	%\bibitem{Bal19}
	%P. B\'alint, O. Butterley and I. Melbourne, Polynomial decay of correlations for flows, including Lorentz gas examples. \textit{Comm. Math. Phys.} 368 (2019), no. 1, 55-111.
	
	%\bibitem{Bla02}
	%M. Blank, G. Keller and C. Liverani, Ruelle-Perron-Frobenius spectrum for Anosov maps. \textit{Nonlinearity} 15 (2002), no. 6, 1905-1973.
	
	\bibitem{Bow73}
	R. Bowen, Symbolic dynamics for hyperbolic flows. \textit{Amer. J. Math.} 95 (1973), 429-460. 
	
	\bibitem{Bow75}
	R. Bowen, D. Ruelle, The ergodic theory of Axiom A flows. \textit{Invent. Math.} 29 (1975), no. 3, 181-202. 
	
	\bibitem{Bow08}
	R. Bowen, Equilibrium states and the ergodic theory of Anosov diffeomorphisms. Second revised edition. \textit{Lecture Notes in Mathematics}, 470. Springer-Verlag, Berlin, 2008. viii+75 pp.
	
	%\bibitem{Bri75a}
	%M. Brin, The topology of group extensions of C systems. \textit{Mathematical Notes of the Academy of Sciences of the USSR} 18 (1975), 858-864.
	
	%\bibitem{Bri75b}
	%M. Brin, Topological transitivity of a certain class of dynamical systems, and flows of frames on manifolds of negative curvature. \textit{Funct. Anal. Appl.} 9 (1975), no. 1, 9-19.
	
	%\bibitem{Bri80}
	%M. Brin and M. Gromov, On the ergodicity of frame flows. \textit{Invent. Math.} 60 (1980), no. 1, 1-7.
	
	%\bibitem{Bri82}
	%M. Brin, Ergodic theory of frame flows. Ergodic theory and dynamical systems, II (College Park, Md., 1979/1980), pp. 163–183, \textit{Progr. Math.}, 21, Birkhäuser, Boston, Mass., 1982.
	
	%\bibitem{Bri84}
	%M. Brin and H. Karcher, Frame flows on manifolds with pinched negative curvature. \textit{Compositio Math.} 52 (1984), no. 3, 275-297.
	
	\bibitem{Bug05}
	Y. Bugeaud and M. Laurent, On exponents of homogeneous and inhomogeneous Diophantine approximation. \textit{Mosc. Math. J.} 5 (2005), no. 4, 747-766, 972.
	
	%\bibitem{Bur03}
	%K. Burns and M. Pollicott, Stable ergodicity and frame flows. \textit{Geom. Dedicata} 98 (2003), 189-210.
	
	%\bibitem{But17}
	%O. Butterley and I. Melbourne, Disintegration of invariant measures for hyperbolic skew products. \textit{Israel J. Math.} 219 (2017), no. 1, 171-188.
	
	\bibitem{Cek24}
	M. Cekić and T. Lefeuvre, Semiclassical analysis on principal bundles. ArXiv:2405.14846
	
	%\bibitem{Che98}
	%N. Chernov, Markov approximations and decay of correlations for Anosov flows. \textit{Ann. of Math. (2)} 147 (1998), no. 2, 269-324.
	
	%\bibitem{Che08}
	%N. Chernov and D. Dolgopyat, Particle drift in self-similar billiards. \textit{Ergodic Theory Dynam. Systems} 28 (2008), no. 2, 389-403.
	
	%\bibitem{Che09}
	%N. Chernov and D. Dolgopyat, Brownian Brownian motion. I. \textit{Mem. Amer. Math. Soc.} 198 (2009), no. 927, viii+193 pp.
	
	%\bibitem{Dal21}
	%D. Daltro and P. Varandas, Exponential decay of correlations for Gibbs measures and semiflows over $C^{1+\alpha}$ piecewise expanding maps. \textit{Ann. Henri Poincar}é 22 (2021), no. 7, 2137-2159. 
	
	%\bibitem{Dem21}
	%M. Demers, N. Kiamari and C. Liverani, Transfer operators in hyperbolic dynamics—an introduction. 33 o Colóquio Brasileiro de Matemática. \textit{Instituto Nacional de Matemática Pura e Aplicada (IMPA)}, Rio de Janeiro, 2021. 238 pp.
	
	%\bibitem{Dod92}
	%M.M. Dodson, Hausdorff dimension, lower order and Khintchine's theorem in metric Diophantine approximation. \textit{J. Reine Angew. Math.} 432 (1992), 69-76.
	
	\bibitem{Dol98a}
	D. Dolgopyat, On decay of correlations in Anosov flows. \textit{Ann. of Math. (2)} 147 (1998), no. 2, 357-390.
	
	\bibitem{Dol98b}
	D. Dolgopyat, Prevalence of rapid mixing in hyperbolic flows. \textit{Ergodic Theory Dynam. Systems} 18 (1998), no. 5, 1097-1114.
	
	\bibitem{Dol02}
	D. Dolgopyat, On mixing properties of compact group extensions of hyperbolic systems. \textit{Israel J. Math.} 130 (2002), 157-205. 
	
	%\bibitem{Dol04}
	%D. Dolgopyat, Limit theorems for partially hyperbolic systems. \textit{Trans. Amer. Math. Soc.} 356 (2004), no. 4, 1637-1689.
	
	%\bibitem{Esl17}
	%P. Eslami, Stretched-exponential mixing for $C^{1+\alpha}$ skew products with discontinuities. \textit{Ergodic Theory Dynam. Systems} 37 (2017), no. 1, 146-175.
	
	\bibitem{Fie05}
	M. Field, I. Melbourne, M. Nicol and A. Török, Statistical properties of compact group extensions of hyperbolic flows and their time one maps. (English summary)
	\textit{Discrete Contin. Dyn. Syst.} 12 (2005), no. 1, 79-96.
	
	%\bibitem{Fie07}
	%M. Field, I. Melbourne and A. Török, Stability of mixing and rapid mixing for hyperbolic flows. \textit{Ann. of Math. (2)} 166 (2007), no. 1, 269-291.
	
	%\bibitem{Fis19}
	%T. Fisher and B. Hasselblatt, Hyperbolic flows. Zurich Lectures in Advanced Mathematics. \textit{EMS Publishing House}, Berlin, [2019], ©2019. xiv+723 pp.
	
	%\bibitem{Giu13}
	%P. Giulietti, M. Pollicott and C. Liverani, Anosov flows and dynamical zeta functions. \textit{Ann. of Math. (2)} 178 (2013), no. 2, 687-73.
	
	%\bibitem{Giu22}
	%P. Giulietti, M. Pollicott and C. Liverani, Anosov Flows and Dynamical Zeta Functions (Errata). ArXiv:2203.04917
	
	%\bibitem{Gou06}
	%S. Gouëzel and C. Liverani, Banach spaces adapted to Anosov systems. \textit{Ergodic Theory Dynam. Systems} 26 (2006), no. 1, 189-217.
	
	%\bibitem{Gou08}
	%S. Gouëzel and C. Liverani, Compact locally maximal hyperbolic sets for smooth maps: fine statistical properties. \textit{J. Differential Geom.} 79 (2008), no. 3, 433-477. 
	
	%\bibitem{Gou19}
	%S. Gouëzel and L. Stoyanov, Quantitative Pesin theory for Anosov diffeomorphisms and flows. \textit{Ergodic Theory Dynam. Systems} 39 (2019), no. 1, 159-200.
	
	\bibitem{Gui04}
	L. Guillopé, K. Lin and M. Zworski, The Selberg zeta function for convex co-compact Schottky groups. \textit{Comm. Math. Phys.} 245 (2004), no. 1, 149-176.
	
	%\bibitem{Gui21}
	%C. Guillarmou and B. Küster, Spectral theory of the frame flow on hyperbolic 3-manifolds. \textit{Ann. Henri Poincaré} 22 (2021), no. 11, 3565-3617.
	
	\bibitem{Kah12}
	J. Kahn and V. Markovic, Immersing almost geodesic surfaces in a closed hyperbolic three manifold. \textit{Ann. of Math. (2)} 175 (2012), no. 3, 1127-1190.
	
	%\bibitem{Liv04}
	%C. Liverani, On contact Anosov flows. \textit{Ann. of Math. (2)} 159 (2004), no. 3, 1275-1312.
	
	%\bibitem{Mar75}
	%B. Marcus, Unique ergodicity of some flows related to Axiom A diffeomorphisms. \textit{Israel J. Math.} 21 (1975), no. 2-3, 111-132.
	
	\bibitem{Mel18}
	I. Melbourne, Superpolynomial and polynomial mixing for semiflows and flows. \textit{Nonlinearity} 31 (2018), no. 10, R268-R316.
	
	\bibitem{Mel18b}
	I. Melbourne and D. Terhesiu, Mixing properties for toral extensions of slowly mixing dynamical systems with finite and infinite measure. \textit{J. Mod. Dyn.} 12 (2018), 285-313.
	
	%\bibitem{Mel23}
	%I. Melbourne and P. Varandas, Good inducing schemes for uniformly hyperbolic flows, and applications to exponential decay of correlations. Arxiv:2302.12363
	
	%\bibitem{Moo84}
	%C. Moore, Exponential decay of correlation coefficients for geodesic flows. Group representations, ergodic theory, operator algebras, and mathematical physics (Berkeley, Calif., 1984), 163–181.
	
	\bibitem{Par90}
	W. Parry and M. Pollicott, Zeta functions and the periodic orbit structure of hyperbolic dynamics. \textit{Asterisque} No. 187-188 (1990), 268 pp.
	
	%\bibitem{Pla72}
	%J.F. Plante, Anosov flows. \textit{Amer. J. Math.} 94 (1972), 729-754.
	
	\bibitem{Pol85}
	M. Pollicott, On the rate of mixing of Axiom A flows. \textit{Invent. Math.} 81 (1985), no. 3, 413-426.
	
	\bibitem{Pol91}
	M. Pollicott, Some applications of thermodynamic formalism to manifolds with constant negative curvature.\textit{Adv. Math.} 85 (1991), no. 2, 161-192.
	
	%\bibitem{Pol92}
	%M. Pollicott, Exponential mixing for the geodesic flow on hyperbolic three-manifolds. \textit{J. Statist. Phys.} 67 (1992), no. 3-4, 667-673.
	
	%\bibitem{Pol98}
	%M. Pollicott and R. Sharp, Exponential error terms for growth functions on negatively curved surfaces. \textit{Amer. J. Math.} 120 (1998), no. 5, 1019-1042.
	
	\bibitem{Pol01}
	M. Pollicott and R. Sharp, Error terms for closed orbits of hyperbolic flows. \textit{Ergodic Theory Dynam. Systems} 21 (2001), no. 2, 545-562. 
	
	\bibitem{Pol08}
	M. Pollicott and R. Sharp, Periodic orbits and holonomy for hyperbolic flows. Geometric and probabilistic structures in dynamics, 289–302, \textit{Contemp. Math.}, 469, \textit{Amer. Math. Soc.}, Providence, RI, 2008. 
	
	\bibitem{Pol24}
	M. Pollicott and D. Zhang, Rapid mixing for compact group extensions of hyperbolic flows. \textit{Trans. Amer. Math. Soc.} 378 (2025), no. 7, 5011-5056.
	
	%\bibitem{Rat73}
	%M. Ratner, Markov partitions for Anosov flows on n-dimensional manifolds. \textit{Israel J. Math.} 15 (1973), 92-114. 
	
	%\bibitem{Rat87}
	%M. Ratner, The rate of mixing for geodesic and horocycle flows. \textit{Ergodic Theory Dynam. Systems }7 (1987), no. 2, 267-288.
	
	%\bibitem{Sar21}
	%P. Sarkar and D. Winter, Exponential mixing of frame flows for convex cocompact hyperbolic manifolds. \textit{Compos. Math.} 157 (2021), no. 12, 2585-2634.
	
	%\bibitem{Sid23}
	%S. Siddiqi, Decay of correlations for certain isometric extensions of Anosov flows. \textit{Ergodic Theory Dynam. Systems} 43 (2023), no. 4, 1382-1432.
	
	%\bibitem{Sto23}
	%L. Stoyanov, Spectral properties of Ruelle transfer operators for regular Gibbs measures and decay of correlations for contact Anosov flows. \textit{Mem. Amer. Math. Soc.} 283 (2023), no. 1404, v+121 pp.
	
	%\bibitem{Sug71}
	%M. Sugiura, Fourier series of smooth functions on compact Lie groups. \textit{Osaka Math. J.} 8 (1971), 33-47.
	
	%\bibitem{Tsu23}
	%M. Tsujii and Z. Zhang, Smooth mixing Anosov flows in dimension three are exponentially mixing. \textit{Ann. of Math. (2)} 197 (2023), no. 1, 65-158.
	
	\bibitem{Wal82}
	P. Walters, An introduction to ergodic theory. Graduate Texts in Mathematics, 79. Springer-Verlag, New York-Berlin, 1982. ix+250 pp.
	
	%\bibitem{Zha24}
	%D. Zhang, A remark on rapid mixing for hyperbolic flows. ArXiv:2405.19241
	
\end{thebibliography}
 \end{document}